\documentclass[11pt]{article}
\usepackage{amsthm, amsmath, amssymb, amsfonts, url, booktabs, tikz, setspace, fancyhdr, bm}
\usepackage{geometry}
\usepackage{hyperref, enumerate}
\usepackage[shortlabels]{enumitem}
\usepackage[babel]{microtype}
\usepackage[english]{babel}
\usepackage[capitalise]{cleveref}
\usepackage{comment}
\usepackage{bbm}
\usepackage{csquotes}
\usepackage{mathabx}
\usepackage{tikz}
\usepackage{graphicx}
\usepackage{float}
\usepackage{xcolor}
\usepackage{mathtools}
\usepackage{mathrsfs}

\usetikzlibrary{positioning, arrows.meta, shapes.geometric}

\counterwithin{figure}{section}

\newtheorem{theorem}{Theorem}[section]

\newtheorem{lemma}[theorem]{Lemma}
\newtheorem{cor}[theorem]{Corollary}

\newtheorem{claim}[theorem]{Claim}

\usetikzlibrary{decorations.pathmorphing}

\theoremstyle{definition}

\newtheorem*{defn-non}{Definition}

\newlist{Case}{enumerate}{2}
\setlist[Case, 1]{%
    label           =   {\bfseries Case \arabic*.},
    labelindent=1em ,labelwidth=1.3cm, labelsep*=1em, leftmargin =!
}
\setlist[Case, 2]{%
    label           =   {\bfseries Subcase \arabic{Casei}.\arabic*.},
    labelindent=-1em ,labelwidth=1.3cm, labelsep*=1em, leftmargin =!
}

\newenvironment{poc}{\begin{proof}[Proof of the claim]}{\end{proof}}

\newcommand{\qbinom}[2]{\genfrac{(}{)}{0pt}{}{#1}{#2}_q}
\newcommand{\diam}{\mathrm{diam}}
\newcommand{\supp}{\mathrm{supp}}

\usepackage{todonotes}

\title{Kleitman's theorem over vector spaces: parity phenomena in canonical and global stability
} 
\author{
Chenhui Lv\thanks{
School of Mathematical Sciences, University of Science and Technology of China, Hefei, 230026, People's Republic of China. Email:lch1994@mail.ustc.edu.cn.}
\and
Zixiang Xu\thanks{Email:zxxu8023@qq.com.}
}

\date{}
\begin{document}
\maketitle
\begin{abstract}
In 1966, Kleitman determined the maximum size of a family of subsets of $[n]$ with bounded symmetric difference. Liao, Liu and Yan recently established a vector-space analogue in the cases $n=d+1$ and $n>2d$, and asked for the sharp bound in the remaining range. We resolve this problem completely by proving the exact vector-space analogue of Kleitman's theorem for every $n\ge d+1$, and we also determine all extremal configurations.

We further develop a stability theory for the vector-space diameter problem. Unlike the Boolean cube, the lattice of subspaces has no translation symmetry, and this makes the stability theory substantially different from its classical counterpart. The geometry of subspace balls leads to two natural notions: canonical stability, which forbids containment only in the canonical extremal configurations, and global stability, which forbids containment in arbitrary balls or adjacent double balls of the corresponding radius. We determine sharp canonical stability in even diameter, sharp canonical and global stability in odd diameter, and prove a nontrivial general upper bound for global stability in even diameter. In particular, these two notions exhibit a sharp parity split: in odd diameter they collapse to the same problem, whereas in even diameter they lead to genuinely different extremal behavior.
\end{abstract}

\section{Introduction} 
Problems of isodiametric type arise in many areas of mathematics~\cite{1992CombinatoricaACZ,1998AMAR,1993DMBL,1990DCGDK,1980SIAMADMFF}, where one seeks the largest possible size of a set with prescribed diameter in a given space. In the discrete setting, a fundamental example asks how large a subset of the Hamming cube can be if all pairwise Hamming distances are bounded. This problem was proposed by Erd\H{o}s and solved by Kleitman~\cite{K1966}.
\begin{theorem}[Kleitman~\cite{K1966}]\label{thm:Kleitman}
Let $n>d\ge 0$ be integers, and let $\mathcal F\subseteq 2^{[n]}$ satisfy
$|F\triangle G|\le d$ for any \(F,G\in\mathcal{F}\).
Then
\[
|\mathcal F|\le
\begin{cases}
\displaystyle \sum_{i=0}^{t}\binom{n}{i}, & \text{if } d=2t,\\[2mm]
\displaystyle \sum_{i=0}^{t}\binom{n}{i}+\binom{n-1}{t}, & \text{if } d=2t+1.
\end{cases}
\]
\end{theorem}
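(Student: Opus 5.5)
The approach is the classical down-compression argument, with the bound reduced to the even case plus an induction on $n$ for the odd case.

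\emph{Compression.} For $i\in[n]$ define the down-shift $S_i$: replace $F\in\mathcal F$ by $F\setminus\{i\}$ whenever $i\in F$ and $F\setminus\{i\}\notin\mathcal F$. This map is injective, so it preserves $|\mathcal F|$. The substantive step is to check that it preserves the diameter condition. Take $F,G$ in the shifted family.
\begin{itemize}
\item If both moved or neither moved, then $|F\triangle G|$ is unchanged.
\item If only $F$ moved, write $F=F_0\setminus\{i\}$ and suppose $i\in G$. Then $G\setminus\{i\}$ was already in $\mathcal F$ (otherwise $G$ would have moved too), and $|(F_0\setminus\{i\})\triangle G|=|F_0\triangle (G\setminus\{i\})|\le d$.
\item If only $F$ moved and $i\notin G$, the distance drops by one.
\end{itemize}
Iterating the shifts over all $i$ terminates, since $\sum_{F}|F|$ strictly decreases. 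We end with a down-closed family of diameter at most $d$ and the same size.

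\emph{Even case $d=2t$.} In a down-closed family, for any $F,G\in\mathcal F$ with $F\cap G=\emptyset$ we have $|F|+|G|=|F\triangle G|\le d$. I would then show that a down-closed family in which any two disjoint members have total size at most $2t$ has at most $\sum_{i\le t}\binom ni$ members. This is the Kleitman step proper and the main obstacle. I would try induction on $n$ by splitting on element $n$:
\[
\mathcal F_0=\{F:n\notin F\},\qquad \mathcal F_1=\{F\setminus\{n\}: n\in F\}.
\]
Both are down-closed in $2^{[n-1]}$, and $\mathcal F_1\subseteq\mathcal F_0$. The condition passes to $\mathcal F_0$ with parameter $2t$. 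For $\mathcal F_1$, any $A,B\in\mathcal F_1$ give $A\cup\{n\}$ and $B\in\mathcal F$, so disjoint pairs of $\mathcal F_1$ have total size at most $2t-1$. This pushes us to a mixed statement: two cross families $\mathcal A\supseteq\mathcal B$ with a cross condition. The cleanest route is to strengthen the hypothesis to a cross version: if $\mathcal A,\mathcal B$ are down-closed and $|A|+|B|\le s$ whenever $A\cap B=\emptyset$, $A\in\mathcal A$, $B\in\mathcal B$, then $|\mathcal A|+|\mathcal B|$ is bounded by the corresponding sum of binomials. Induction on $n$ then closes via Pascal's rule $\binom ni=\binom{n-1}i+\binom{n-1}{i-1}$. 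Alternatively, one can use Kleitman's original averaging over chains/Katona-type cyclic permutations: on each cyclic order, count the intervals lying in $\mathcal F$ and show at most $t$ per length-class pattern. I would try the cross-family induction first.

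\emph{Odd case $d=2t+1$.} Split on element $n$ as above. Then $\mathcal F_0$ has diameter at most $2t+1$ in $2^{[n-1]}$. Moreover, $\mathcal F_0\cap\mathcal F_1$-type pairs, via the trick $A\mapsto A\cup\{n\}$ (i.e. taking $F\setminus\{n\}$ for $F\ni n$ and comparing it with members not containing $n$), show that $\mathcal F_1\cup\mathcal F_0$ viewed appropriately has even diameter $2t$. Concretely, after compression the family $\{F\setminus\{n\}:F\in\mathcal F\}$ satisfies a diameter-$2t$ type bound plus a residual term. Alternatively, apply the even-case bound in $2^{[n]}$ to the relevant half and the even bound in $n-1$ to the other:
\[
|\mathcal F|\le \sum_{i\le t}\binom{n-1}{i}+\sum_{i\le t}\binom{n-1}{i}
=\sum_{i\le t}\binom ni+\binom{n-1}{t},
\]
using $\sum_{i\le t}\binom{n-1}{i}=\sum_{i\le t}\binom ni-\sum_{i\le t-1}\binom{n-1}{i}$. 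The precise pairing needs checking. The key fact is that $\mathcal F_1$ (sets containing $n$, with $n$ removed) has diameter at most $2t+1$ and $\mathcal F_0$ too, but removing $n$ from members of $\mathcal F_1$ and adding nothing to $\mathcal F_0$ gives cross distances at most $2t$. A parity shift argument then gives each half a $2t$-type bound in $n-1$ variables.
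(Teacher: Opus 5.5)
The paper does not prove this theorem; it is quoted from Kleitman. So your proposal has to stand on its own, and as written it does not. Your down-compression step is correct, but the main inequality is never proved, and the odd case contains a false step.

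\textbf{Even case.} After compression $\mathcal F$ is a down-set with $|F\cup G|\le 2t$ for all $F,G\in\mathcal F$. Hence the complements $\{[n]\setminus F: F\in\mathcal F\}$ form an $(n-2t)$-intersecting family, and what remains is precisely Katona's intersecting theorem, i.e.\ the whole content of the statement. You flag it as the main obstacle but do not prove it.

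Your split on the element $n$ does not close by itself. Applying the inductive bounds separately to $\mathcal F_0$ (parameter $2t$) and $\mathcal F_1$ (parameter $2t-1$) gives $\sum_{i\le t}\binom{n-1}{i}+\sum_{i\le t-1}\binom{n-1}{i}+\binom{n-2}{t-1}$. This exceeds $\sum_{i\le t}\binom ni$ by $\binom{n-2}{t-1}$.

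The cross-family strengthening meant to repair this is never stated, and in the form you sketch no bound of the required shape can hold. The pair $\mathcal A=\{\varnothing\}$, $\mathcal B=\{B:|B|\le s\}$ satisfies your hypothesis with $|\mathcal A|+|\mathcal B|=1+\sum_{i\le s}\binom ni$. So any bound depending only on $n$ and $s$ is useless for the induction. A workable statement must retain the nesting $\mathcal F_1\subseteq\mathcal F_0$ and the separate internal and cross conditions. Proving that such a statement is inherited by both halves of the split is exactly the missing argument.

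The cyclic-permutation alternative is not an argument yet. It is unclear what is counted on each cyclic order, or why the count would be at most $t$.

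\textbf{Odd case.} The claim that a parity shift gives each half the even bound in $n-1$ variables is false for an arbitrary splitting element. Take $\mathcal F=\mathcal K_y(n,2t+1)$ with $y\neq n$; it is already a down-set. Then $\mathcal F_0$ consists of all subsets of $[n-1]$ of size at most $t$ together with the $(t+1)$-subsets containing $y$, so $|\mathcal F_0|=\sum_{i\le t}\binom{n-1}{i}+\binom{n-2}{t}$.

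What is true for a down-set is that $\mathcal F_1$ satisfies $|A\cup B|\le 2t$, so it obeys the even bound. However, $\mathcal F_0$ only inherits diameter $2t+1$. Induction then yields $2\sum_{i\le t}\binom{n-1}{i}+\binom{n-2}{t}$, which overshoots the target $2\sum_{i\le t}\binom{n-1}{i}$ by $\binom{n-2}{t}$. Your identity for the target is correct.

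So an additional idea is needed in both parities. Two standard ways to supply it:
\begin{itemize}
\item Invoke, or prove, Katona's theorem for the complement family with $s=n-d$. This handles both parities at once, since $n+s\equiv d \pmod 2$.
\item Use Harper's vertex-isoperimetric inequality. The complements of members of $\mathcal F$ lie at distance at least $n-d$ from $\mathcal F$, hence outside its $(n-d-1)$-neighbourhood. Harper's lower bound on the size of that neighbourhood then forces $|\mathcal F|$ to be at most the claimed value.
\end{itemize}
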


Theorem~\ref{thm:Kleitman} is sharp. When $d=2t$ and $n \geq d+2$, equality is attained by the Hamming ball
\(
\mathcal K(n,d):=\bigcup_{i=0}^{t}\binom{[n]}{i}.
\)
When $d=2t+1$ and $n \geq d+2$, fixing $y\in [n]$, equality is attained by
\(
\mathcal K_y(n,d):=
\bigcup_{i=0}^{t}\binom{[n]}{i}
\cup
\bigl\{A\in \tbinom{[n]}{t+1}: y\in A\bigr\}.
\)
As one of the foundational results in extremal set theory, it is natural to seek analogues in other discrete geometries. Among the most important $q$-analogues of the Boolean lattice is the lattice of subspaces of $\mathbb F_q^n$. More broadly, many central extremal theorems for set systems admit finite vector-space analogues, including the Erd\H{o}s--Ko--Rado theorem~\cite{FW1986,H1975}, the Hilton--Milner theorem~\cite{BBCFMPS2010}, Katona-type theorems~\cite{2013VectorKatona}, and results on cross-intersecting families~\cite{WZ2013}. We also refer the reader to
\cite{2023JCTACao,2022SIDMACao,2023DmShanZhou,2023GCShanZhou,2023JCTAWangXuZhang,2025JCTAYao}
for several further developments in extremal set theory over finite vector spaces.

Let $q$ be a prime power, let $V:=\mathbb F_q^n$, and let $\mathcal V$ denote the lattice of all subspaces of $V$. For $U,W\in\mathcal{V}$, write $U\le W$ if $U$ is a subspace of $W$, and write $U\not\le W$ otherwise. For $0\le k\le n$, write
\(
\mathcal V(k):=\{U\le V:\dim U=k\},\)
we then have
\(
|\mathcal V(k)|=\qbinom{n}{k}
=\prod_{i=0}^{k-1}\frac{q^{\,n-i}-1}{q^{\,k-i}-1}.
\)
In particular, we write \(\boldsymbol{0}\) for the zero subspace of \(V\).

We next introduce the distance that naturally governs our problem, which was used in~\cite{LLY2026}. For $U,W \in \mathcal{V}$, define $\Delta: \mathcal{V}\times \mathcal{V}\to \mathbb{N}$ by
\begin{equation*}
\Delta(U,W)
:=
\dim(U+W)-\dim(U\cap W)
=
\dim U+\dim W-2\dim(U\cap W),
\end{equation*}
which is the natural subspace analogue of the Hamming distance. It is easy to check that $(\mathcal{V}, \Delta)$ is a metric space; see \cite{LLY2026}.
For a family $\mathcal F\subseteq \mathcal V$, write
\[
\diam(\mathcal F):=\max\{\Delta(U,W):U,W\in\mathcal F\},
\quad
\mathcal F(k):=\mathcal F\cap \mathcal V(k),
\quad
\supp(\mathcal{F}) := \{ 0\le i\le n : \mathcal{F}(i) \neq \varnothing \}.
\]
We also write $U^\perp$ for the orthogonal complement of a subspace $U\le V$ with respect to a fixed nondegenerate bilinear form on $V$, and set
\(
\mathcal F^\perp:=\{U^\perp:U\in\mathcal F\}.
\)
For \(C\in\mathcal V\) and \(r\ge0\), define the radius-\(r\) ball
\(
\mathcal B(C,r):=\{A\in\mathcal V:\Delta(A,C)\le r\}.
\)
For \(C_1,C_2\in\mathcal V\), define
\(
\mathcal B(C_1,C_2,r):=\mathcal B(C_1,r)\cup \mathcal B(C_2,r).
\)

With this notation, the natural vector-space analogue of Theorem~\ref{thm:Kleitman} asks for the largest family of subspaces of $V$ with diameter at most $d$ in the metric space $(\mathcal{V}, \Delta)$. A recent result of Liao, Liu and Yan~\cite{LLY2026} established the sharp bound in the cases $n=d+1$ and $n>2d$, and explicitly asked whether the same conclusion remains valid for all remaining parameters. Our first main theorem answers this question completely: for every $n\ge d+1$, the sharp upper bound is exactly the same as in the set case, with ordinary binomial coefficients replaced by Gaussian binomial coefficients, and the extremal families can be fully characterized.

\begin{theorem}\label{thm:main}
Fix a prime power $q$, and let $n\ge d+1\ge 3$.
Let $\mathcal F\subseteq \mathcal V$ satisfy $\diam(\mathcal F)\le d$.
Then
\[
|\mathcal F|\le
\begin{cases}
\displaystyle \sum_{i=0}^{t}\qbinom{n}{i}, & \text{if } d=2t,\\[2mm]
\displaystyle \sum_{i=0}^{t}\qbinom{n}{i}+\qbinom{n-1}{t}, & \text{if } d=2t+1.
\end{cases}
\]
Moreover, equality holds if and only if, after possibly replacing $\mathcal F$ by $\mathcal F^\perp$, one of the following holds:
\begin{enumerate}
\item[\textup{(1)}] $n=d+1$ and $d=2t$.
For $0\le k\le t$, either
\(
|\mathcal F(k)|=\qbinom{n}{k}\)
and
\(|\mathcal F(n-k)|=0,
\)
or
\(
|\mathcal F(k)|=0\) and
\(
|\mathcal F(n-k)|=\qbinom{n}{k}.
\)

\item[\textup{(2)}] $n=d+1$ and $d=2t+1$.
For $0\le k\le t$, either
\(
|\mathcal F(k)|=\qbinom{n}{k}\)
and \(
|\mathcal F(n-k)|=0,
\)
or
\(
|\mathcal F(k)|=0\)
and \(
|\mathcal F(n-k)|=\qbinom{n}{k}.
\)
Moreover,
\(
|\mathcal F(t+1)|=\qbinom{n-1}{t}
\)
and $\mathcal F(t+1)$ is $1$-intersecting in $V$.

\item[\textup{(3)}] $n\ge d+2$, $d=2t$, and 
\(
\mathcal F=\bigcup_{k=0}^{t}\mathcal V(k).
\)

\item[\textup{(4)}] $n\ge d+2$ and $d=2t+1$.
There exists a $1$-dimensional subspace $X\in\mathcal V(1)$ such that
\[
\mathcal F
=
\bigcup_{k=0}^{t}\mathcal V(k)
\;\cup\;
\{A\in \mathcal V(t+1): X\le A\}.
\]
\end{enumerate}
\end{theorem}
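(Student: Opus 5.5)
Our plan is to sort $\mathcal F$ by dimension and use the diameter condition between layers. If $U\in\mathcal F(i)$ and $W\in\mathcal F(j)$, then $\Delta(U,W)\le d$ gives $\dim(U\cap W)\ge (i+j-d)/2$. In particular, the layers $\mathcal F(i)$ and $\mathcal F(j)$ are ``$s$-cross-intersecting'' with $s=\lceil (i+j-d)/2\rceil$; within a single layer $\mathcal F(k)$ this is an $(k-\lfloor d/2\rfloor)$-intersecting condition. Since $U\mapsto U^\perp$ is an isometry of $(\mathcal V,\Delta)$ with $\dim U^\perp=n-\dim U$, we may assume without loss of generality that $\sum_{i\le n/2}|\mathcal F(i)|\ge\sum_{i> n/2}|\mathcal F(i)|$, or some similar normalization.

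The main step is a pairing argument that imitates Kleitman's proof through pairs of levels. For $d=2t$ we pair level $k$ with level $k'$, where $k+k'$ is chosen so that $\mathcal F(k)\cup\mathcal F(k')$ is forced to be cross-$(k+k'-2t)/2$-intersecting. More precisely, we want to group the layers $\mathcal F(t-j)$ and $\mathcal F(t+1+j+\ell)$ so that each group injects into the total budget $\qbinom{n}{t-j}$. The key tool is a $q$-analogue of the cross-intersecting bound
\[
|\mathcal A|+|\mathcal B|\le \qbinom{n}{a}
\]
for families $\mathcal A\subseteq\mathcal V(a)$ and $\mathcal B\subseteq\mathcal V(b)$ with $a<b$ and $\dim(A\cap B)\ge a+b-d$ appropriately large. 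The cleanest way to prove this is through shadows. For $A$ of dimension $a\le t$ and a family $\mathcal G\subseteq\mathcal V(g)$ with $g>t$ which is $(g-t)$-intersecting, the $(2t-g)$-dimensional... In fact we would use the standard Katona-type $q$-shadow theorem, cited in the introduction as the vector-space Katona theorem~\cite{2013VectorKatona}. It says that for an $s$-intersecting family $\mathcal G\subseteq\mathcal V(g)$, the shadow at level $g-s$ has size at least $|\mathcal G|$, and the shadow at lower levels grows correspondingly. The diameter condition says that the shadow of $\mathcal F(g)$ at level $2t-g$ is disjoint from the complement-type family coming from $\mathcal F(2t-g)$... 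Honestly, the precise bookkeeping here is the crux. We would reduce to the statement that, for each $j\ge1$, the pair $\bigl(\mathcal F(t-j+1)\cup\cdots,\ \mathcal F(t+j)\bigr)$ can be charged to distinct missing subspaces. That is, we would show $|\mathcal F(t+j)|\le\bigl|\mathcal V(t-j)\setminus\mathcal F(t-j)\bigr|$ plus contributions from the dual side. This uses the fact that an element of $\mathcal F(t+j)$ has all its $(t-j)$-subspaces $C$ within distance $2j$, and that $C'$ with $\dim(C'\cap \text{everything})$ small must be excluded.

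For $d=2t+1$ we would peel off one extra layer. We fix the normalization above and consider $\mathcal F(t+1)$, which is $1$-intersecting. By the $q$-EKR theorem (Hsieh, Frankl--Wilson~\cite{FW1986,H1975}), if $n\ge 2t+2$ then $|\mathcal F(t+1)|\le\qbinom{n-1}{t}$, with equality only for a star, apart from the boundary case $n=2t+2$, where duality allows the dual family as well. The remaining layers are handled as in the even case with $d$ replaced by $2t$ through a shifted pairing. This explains why the bound contains exactly the extra term $\qbinom{n-1}{t}$.

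The equality characterization follows by tracking equality in each inequality used. In the EKR step, equality forces a star $\{A:X\le A\}$. In the cross-intersecting step, equality forces every pair of layers to be ``all or nothing''. When $n\ge d+2$ the opposite layers are too large for the ``nothing below'' option, which forces cases (3) and (4). When $n=d+1$ the levels $k$ and $n-k$ have equal size and are exchangeable, which gives cases (1) and (2).

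The main obstacle we expect is the middle range $d+2\le n\le 2d$. There, layers above $t$ and below $t$ genuinely interact, and the simple EKR bound per layer fails, so the cross-level charging injection must be constructed carefully. We would first try a $q$-version of Kleitman's original down-compression substitute: replace $\mathcal F$ by a family closed under taking subspaces while preserving the size and the diameter bound. If that fails, we would fall back on a weighted LYM-type inequality over maximal chains of subspaces.
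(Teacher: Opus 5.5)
Your proposal has a genuine gap: the inequality that carries the whole proof is never established. You flag the ``bookkeeping'' as the crux, then list fallbacks (a subspace analogue of down-compression, an LYM-type argument) without carrying any of them out. Compression in particular is exactly what the subspace lattice lacks, since it has no translations, and you give no diameter-preserving substitute.

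The concrete charging statements you do write down are false:
\begin{itemize}
\item For $d=2t$, layers $t+j$ and $t-j$ do not constrain each other at all: $\Delta(A,B)=2t-2\dim(A\cap B)\le 2t$ automatically. So $|\mathcal F(t+j)|\le|\mathcal V(t-j)\setminus\mathcal F(t-j)|$ already fails for $\mathcal F=\mathcal V(t-j)\cup\{A\}$ with $A\in\mathcal V(t+j)$.
\item A layer $\mathcal F(t+1+j)$ with $j\ge 1$ cannot be charged to the budget $\qbinom{n}{t-j}$. All $(t+2)$-spaces through a fixed $2$-space form a family of diameter $2t$ and size $\qbinom{n-2}{t}$, which exceeds $\qbinom{n}{t-1}$ once $n$ is large.
\item The bound $|\mathcal A|+|\mathcal B|\le\qbinom na$ for $a<b$ is false without further hypotheses (take $\mathcal A=\varnothing$).
\item In the odd case, deleting $\mathcal F(t+1)$ does not leave a family of diameter at most $2t$. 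So ``the even case with $d$ replaced by $2t$'' does not apply to what remains.
\end{itemize}

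The idea you are missing is to pair \emph{complementary} layers $k$ and $n-k$, not layers symmetric about $t$. Because $n\ge d+1$, $\mathcal F(k)$ and $\mathcal F(n-k)$ are cross-$1$-intersecting. This gives $|\mathcal F(k)|+|\mathcal F(n-k)|\le\qbinom nk$, with equality only if one layer is full and the other empty. The paper derives this from the Wang--Zhang cross-intersection theorem; the inequality alone also follows from a perfect matching in the regular ``trivial intersection'' bipartite graph between $\mathcal V(k)$ and $\mathcal V(n-k)$.

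This pairing disposes of all layers $k\le t$ and $k\ge n-t$ at once. The real work is showing that the middle layers cannot help. After normalizing so that $\min\supp(\mathcal F)+\max\supp(\mathcal F)\le n$:
\begin{itemize}
\item If $\min\supp(\mathcal F)\ge t+1$, the EKR bounds for the $(k-t)$-intersecting layers sum to less than $\frac{2}{q^t-1}\qbinom nt$.
\item If some layer $\xi\in[\lceil d/2\rceil+1,\,n-t-1]$ is occupied, the pair $\bigl(\mathcal F(t),\mathcal F(n-t)\bigr)$ loses a constant fraction of $\qbinom nt$. Either both layers are nonempty and the complementary bound drops by $q^{t(n-t)}-1$. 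Or one is empty and every member of the other (after taking $\perp$ for $\mathcal F(n-t)$) must meet a fixed subspace of dimension $\xi$ or $n-\xi$.
\item Meanwhile all middle layers together contribute a strictly smaller fraction. One boundary configuration in the odd case needs a separate argument.
\end{itemize}
Equality then follows from the complementary-pair equality case together with Hsieh's theorem for $\mathcal F(t+1)$.
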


Having established the full vector-space analogue of Kleitman's theorem, it is natural to investigate the corresponding stability theory. In the set-theoretic setting, this direction was initiated by Frankl~\cite{F2017}, who not only characterized all extremal families in Kleitman's theorem, but also determined the largest families of bounded diameter that are not contained in an extremal configuration. More recently, Wu, Li, Feng, Liu and Yu~\cite{2025Li} pushed this further by establishing a second-level stability theorem, thereby refining the picture of near-extremal families in the Boolean setting. We also note that analogous higher-order stability phenomena have been studied for other fundamental extremal results, most notably the Erd\H{o}s--Ko--Rado theorem~\cite{2023CombGaoLiuXu,2024AlgeEKR,2017PAMSHanJie,1967Hilton,2024EUJCPeng,2017PAMSMubayi}.

For the stability questions considered in this paper, the relevant range is when the ambient dimension is larger than the diameter parameter by a definite margin. In this regime, the extremal structures given by Theorem~\ref{thm:main} simplify substantially. Indeed, once $n\ge d+2$, the boundary phenomena occurring at $n=d+1$ disappear, and the extremal families are described by a small collection of canonical geometric configurations.

When $d=2t$ is even, the extremal families are precisely the lower and upper radius-$t$ balls
\[
\mathcal L_t:=\mathcal B(\boldsymbol 0,t)=\bigcup_{k=0}^{t}\mathcal V(k),
\qquad
\mathcal U_t:=\mathcal B(V,t)=\mathcal L_t^\perp=\bigcup_{k=n-t}^{n}\mathcal V(k).
\]
When $d=2t+1$ is odd, the extremal families are precisely the {canonical double balls}
\[
\mathcal D_t(X):=\mathcal B(\boldsymbol 0,X,t)
=
\bigcup_{k=0}^{t}\mathcal V(k)\cup \{A\in\mathcal V(t+1):X\le A\},
\qquad X\in\mathcal V(1),
\]
together with their orthogonal images
\(
\mathcal D_t(X)^\perp=\mathcal B(V,X^\perp,t).
\)

These canonical extremal families suggest the most direct analogue of the usual stability question: one asks for the largest families of bounded diameter that are not contained in any of the extremal configurations appearing in Theorem~\ref{thm:main}. This leads to what we call \emph{Type~A} stability, and should be viewed as the primary vector-space analogue of the classical stability problem.

The vector-space setting also gives rise to a stronger stability question. In the Boolean cube, all balls of a fixed radius are equivalent under translations, so excluding the extremal balls is essentially the same as excluding all balls of that radius. This is no longer true for subspaces: the geometry of $\mathcal B(X,t)$ depends on $\dim X$. Thus, besides excluding only the canonical extremal configurations from Theorem~\ref{thm:main}, it is natural to exclude containment in every radius-$t$ ball, or in every union of two adjacent radius-$t$ balls. This is our \emph{Type~B} stability problem.

To state these notions uniformly, let $\mathfrak E$ be a collection of subfamilies of $\mathcal V$. We say that a family $\mathcal F\subseteq\mathcal V$ is \emph{$(\mathfrak{E},d)$-admissible} if
\(
\diam(\mathcal F)\le d\) and
\(
\mathcal F\not\subseteq \mathcal H\) for every
\(\mathcal H\in\mathfrak E.
\)
In other words, $\mathcal F$ has bounded diameter but is not contained in any forbidden configuration from the class $\mathfrak E$. With this language, the stability problems studied in this paper correspond to the following natural choices of $\mathfrak E$.

For even diameter $d=2t$, we define
\[
\mathfrak E_A^{\mathrm{2t}}:=\{\mathcal L_t,\mathcal U_t\},
\qquad
\mathfrak E_B^{\mathrm{2t}}:=\{\mathcal B(X,t):X\in\mathcal V\}.
\]
Thus Type~A excludes only the two canonical extremal balls from Theorem~\ref{thm:main}, whereas Type~B excludes containment in an arbitrary radius-$t$ ball.

For odd diameter $d=2t+1$, we define
\[
\mathfrak E_A^{\mathrm{2t+1}}
:=
\{\mathcal D_t(X),\mathcal D_t(X)^\perp:X\in\mathcal V(1)\},
\qquad
\mathfrak E_B^{\mathrm{2t+1}}
:=
\{\mathcal B(X,Y,t):X,Y\in\mathcal V,\ \Delta(X,Y)=1\}.
\]
Thus Type~A excludes only the canonical double balls appearing in Theorem~\ref{thm:main}, whereas Type~B excludes containment in an arbitrary union of two adjacent radius-$t$ balls.

We first determine the sharp Type~A stability theorem in the even case.

\begin{theorem}\label{thm:stab-typeA-even}
Fix a prime power $q$ and an even integer $d=2t\ge 4$.
Assume that $n\ge 7t+5$.
If $\mathcal F\subseteq \mathcal V$ is $(\mathfrak E_A^{\mathrm{2t}},2t)$-admissible, then
\[
|\mathcal F|
\le
\sum_{i=0}^{t-1}\qbinom{n}{i}
+
\qbinom{n-1}{t-1}
+
\qbinom{n-1}{t}.
\]
Moreover, equality holds if and only if there exists a $1$-dimensional subspace
$X\in\mathcal V(1)$ such that
\(
\mathcal F=\mathcal B(X,t)\)
or
\(
\mathcal F^\perp=\mathcal B(X,t).
\)
\end{theorem}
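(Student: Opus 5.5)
First I check that $\mathcal B(X,t)$ with $\dim X=1$ is admissible and has the claimed size. Its members are the subspaces $A$ with $\dim A+1-2\dim(A\cap X)\le t$. These are all $A$ with $\dim A\le t-1$, together with the $(t+1)$-dimensional $A$ containing $X$. For $\dim A=t$ we need $\dim A+1-2\dim(A\cap X)\le t$, i.e.\ $\dim(A\cap X)\ge 1/2$, so exactly the $t$-dimensional $A$ containing $X$ qualify. This gives $\sum_{i<t}\qbinom{n}{i}+\qbinom{n-1}{t-1}+\qbinom{n-1}{t}$. The diameter is at most $2t$ by the triangle inequality. The family is not contained in $\mathcal L_t$, since it contains a $(t+1)$-space, and it is not contained in $\mathcal U_t$ because $n$ is large. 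Its orthogonal image behaves the same way.

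For the upper bound, let $\mathcal F$ be admissible of maximum size and put $k_-=\min\supp(\mathcal F)$ and $k_+=\max\supp(\mathcal F)$. Any $A$ of dimension $k_-$ and $B$ of dimension $k_+$ satisfy $\Delta(A,B)\ge k_+-k_-$, so $k_+-k_-\le 2t$. If $\mathcal F$ contains subspaces of dimension at most $t$ and also of dimension at least $n-t$, then $n\le 3t$, a contradiction. Replacing $\mathcal F$ by $\mathcal F^\perp$ if necessary, I may therefore assume every member has dimension at most $n/2$. Because $\mathcal F\not\subseteq\mathcal L_t$, some $B\in\mathcal F$ has $\dim B=s\ge t+1$.

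The main case is $s=t+1$ with $\dim A\ge t-1$ replaced by the general count. Each $A\in\mathcal F(k)$ must satisfy $\dim(A\cap B)\ge (k+s-2t)/2$. Every $B'\in\mathcal F$ of dimension $\ge t+1$ forces such a constraint on all members, so the whole family $\mathcal F_{\ge t}:=\mathcal F(t)\cup\mathcal F(t+1)\cup\cdots$ consists of subspaces that pairwise meet in large dimension. Concretely, $t$-spaces and $(t+1)$-spaces in $\mathcal F$ pairwise intersect nontrivially, in fact $\Delta\le 2t$ gives $\dim(A\cap A')\ge 1$ for $\dim A=t$ and $\dim A'=t+1$. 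Moreover, $(t+1)$-spaces pairwise meet in dimension $\ge1$, and in dimension $\ge 2$ if one of them has dimension $t+2$, and so on. I would then apply the vector-space Hilton–Milner theorem of \cite{BBCFMPS2010} to the cross-intersecting pair $(\mathcal F(t),\mathcal F(t+1))$, or a cross-intersecting Erd\H{o}s--Ko--Rado-type bound in the style of \cite{WZ2013}. This should give $|\mathcal F(t)|+|\mathcal F(t+1)|\le\qbinom{n-1}{t-1}+\qbinom{n-1}{t}$, with equality only when everything contains a common $X\in\mathcal V(1)$. The term $\qbinom{n}{t}$ that is "lost" compared with $\mathcal L_t$ is the price of $B$: since $\mathcal F(t)$ must meet $B$, we get $|\mathcal F(t)|\le \qbinom{n}{t}-\qbinom{n-t-1}{t}$.

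Next I have to handle members of dimension $\ge t+2$ and the lower levels. Any $C\in\mathcal F$ with $\dim C=t+j$, $j\ge2$, forces every $A\in\mathcal F(k)$ with $k\ge j$ to meet $C$ in dimension $\ge (k+j)/2$. This removes a $1-O(q^{-(n-2t)})$ fraction of the $t$-level and even of the $(t-1)$-level. In the range $n\ge 7t+5$, a crude Gaussian-binomial estimate of the form $\qbinom{n}{k}\ge q^{n-2k}\cdot(\text{many})\,\qbinom{n}{k-1}$ shows that the loss exceeds any gain from higher levels. The higher levels also form $t$-wise intersecting structures and can be bounded by $\qbinom{n}{t+j}$ times a small fraction. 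This reduces to $\supp(\mathcal F)\subseteq[0,t+1]$.

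In that reduced setting I bound $\sum_{i<t}|\mathcal F(i)|\le\sum_{i<t}\qbinom{n}{i}$ and $|\mathcal F(t)|+|\mathcal F(t+1)|$ as above, and then characterise equality. Equality forces a common $X$ in all $t$- and $(t+1)$-members, which gives $\mathcal F=\mathcal B(X,t)$. The main obstacle is the cross-intersecting step at levels $t$ and $t+1$ once the non-trivial (Hilton–Milner) configurations are included. I would handle it with a shadow and counting argument: either all $(t+1)$-spaces share a point, or $\tau(\mathcal F(t+1))\ge2$, in which case $|\mathcal F(t)|=O(q^{t(n-t)-(n-2t)})$ loses more than the bound allows. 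The explicit constant $7t+5$ is what makes these comparisons go through.
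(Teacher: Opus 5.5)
\textbf{Main gap: the case $M\ge t+2$.} Your step that ``reduces to $\supp(\mathcal F)\subseteq[0,t+1]$'' does not work as described. You pay for members of dimension $\ge t+2$ with the losses at levels $t-1$ and $t$, but the target is $g(n,t)=\sum_{i<t}\qbinom{n}{i}+\qbinom{n-1}{t-1}+\qbinom{n-1}{t}$, not $|\mathcal L_t|$. Showing that level $t$ loses almost all of $\qbinom{n}{t}$ only gives $|\mathcal F|<|\mathcal L_t|$, and $|\mathcal L_t|-g(n,t)=(q^t-1)\qbinom{n-1}{t}>0$. Measured against $g(n,t)$, levels $t-1$ and $t$ can free at most $\qbinom{n}{t-1}+\qbinom{n-1}{t-1}$. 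The upper levels can be far larger. For example, $\mathcal B(W,t)$ with $\dim W=2$ is admissible and contains all $\qbinom{n-2}{t}$ subspaces of dimension $t+2$ through $W$, and $\qbinom{n-2}{t}/\qbinom{n}{t-1}$ has order $q^{n-4t+1}$.

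What actually pays for the upper levels is level $t+1$. If $A\in\mathcal F(M)$ with $M\ge t+2$, then every $B\in\mathcal F(t+1)$ satisfies $2\dim(A\cap B)\ge M-t+1\ge 3$. Hence $\dim(A\cap B)\ge 2$ and $|\mathcal F(t+1)|\le\qbinom{3t+1}{2}\qbinom{n-2}{t-1}\le\frac18\qbinom{n-1}{t}$. You mention this dimension-$2$ intersection in passing but never use it. With only the Erd\H{o}s--Ko--Rado bound $|\mathcal F(t+1)|\le\qbinom{n-1}{t}$, the count cannot close.

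The levels $k\ge t+2$ must then be bounded on the scale of $\qbinom{n-1}{t}$, not as ``a small fraction of $\qbinom{n}{t+j}$''. The $(k-t)$-intersecting bound $\max\{\qbinom{n-k+t}{t},\qbinom{k+t}{t}\}$ sums to less than $\frac56\qbinom{n-1}{t}$. Together with $|\mathcal F(t)|\le\qbinom{3t}{1}\qbinom{n-1}{t-1}$, this gives $|\mathcal F|<g(n,t)$ directly whenever $M\ge t+2$, so no reduction is needed. Also, $\Delta(A,C)\le 2t$ gives $\dim(A\cap C)\ge (k+j-t)/2$, not $(k+j)/2$.

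\textbf{Normalization.} Knowing that $\mathcal F$ cannot meet both $[0,t]$ and $[n-t,n]$ does not let you assume every member has dimension at most $n/2$; the support could be centred at $n/2$. The correct normalization is $m+M\le n$. This gives $M\le n-t-2$, and layers above $n/2$ then need the bound $\qbinom{k+t}{t}$.

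\textbf{The case $M=t+1$.} Your dichotomy (star versus nontrivial, with Hilton--Milner in the nontrivial case) is the one the paper uses. However, the inequality $|\mathcal F(t)|+|\mathcal F(t+1)|\le\qbinom{n-1}{t-1}+\qbinom{n-1}{t}$ does not follow from an off-the-shelf cross-intersection theorem. The Wang--Zhang extremal pair (one $t$-space together with all $(t+1)$-spaces meeting it) exceeds this bound, because it ignores that $\mathcal F(t+1)$ is $1$-intersecting.

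Your star case is also incomplete. Suppose $\mathcal F(t+1)$ lies in the star of $X$ but some $B_0\in\mathcal F(t)$ misses $X$. You must still show the family is small. Each $A\in\mathcal F(t+1)$ contains $X$ and meets $B_0$, so $|\mathcal F(t+1)|\le\qbinom{t}{1}\qbinom{n-2}{t-1}$, while $|\mathcal F(t)|\le\qbinom{t+1}{1}\qbinom{n-1}{t-1}$. Only after this does the bound hold and equality force $X$ into every $t$- and $(t+1)$-dimensional member, giving $\mathcal F=\mathcal B(X,t)$.
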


To describe the odd extremal families, fix $X\in\mathcal V(1)$ and $Y\in\mathcal V(k)$ with $X\not\le Y$, and define
\[
\mathcal{HM}(n,k,X,Y)
:=
\{A\in \mathcal V(k): X\le A,\ \dim(A\cap Y)\ge 1\}
\cup
\{A\in \mathcal V(k): A\le X+Y\}.
\]
When $k=3$, for $Y\in\mathcal V(3)$ define
\[
\mathcal{HM}^*(n,3,Y)
:=
\{A\in \mathcal V(3): \dim(A\cap Y)\ge 2\}.
\]
Set
\[
\mathcal K(n,k,X,Y)
:=
\bigcup_{i=0}^{k-1}\mathcal V(i)\cup \mathcal{HM}(n,k,X,Y),
\]
and, when $k=3$,
\[
\mathcal K^*(n,3,Y)
:=
\bigcup_{i=0}^{2}\mathcal V(i)\cup \mathcal{HM}^*(n,3,Y).
\]

The odd case exhibits an unexpected rigidity phenomenon: although Type~B is a priori stronger than Type~A, the two notions in fact coincide. Accordingly, the sharp odd stability theorem takes the same form for both notions.

\begin{theorem}\label{thm:stab-type-odd}
Fix a prime power $q$ and an odd integer $d=2t+1\ge 5$.
Assume that $n\ge 5t+3$.
Let $\star\in\{A,B\}$. If $\mathcal F\subseteq \mathcal V$ is $(\mathfrak E_\star^{\mathrm{2t+1}},2t+1)$-admissible, then
\[
|\mathcal F|
\le
\sum_{i=0}^{t}\qbinom{n}{i}
+
\qbinom{n-1}{t}
-
q^{t(t+1)}\qbinom{n-t-2}{t}
+
q^{t+1}.
\]
Moreover, equality holds if and only if one of the following occurs:
\begin{enumerate}
\item[\textup{(1)}] There exist $X\in\mathcal V(1)$ and $Y\in\mathcal V(t+1)$ with
$X\not\le Y$ such that
\(
\mathcal F=\mathcal K(n,t+1,X,Y)\) or
\(
\mathcal F^\perp=\mathcal K(n,t+1,X,Y).
\)

\item[\textup{(2)}] $t=2$, and there exists $Y\in\mathcal V(3)$ such that
\(
\mathcal F=\mathcal K^*(n,3,Y)\) or
\(
\mathcal F^\perp=\mathcal K^*(n,3,Y).
\)
\end{enumerate}
\end{theorem}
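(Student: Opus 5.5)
Since every canonical double ball $\mathcal D_t(X)=\mathcal B(\boldsymbol 0,X,t)$ and its image $\mathcal B(V,X^\perp,t)$ is a union of two adjacent radius-$t$ balls, we have $\mathfrak E_A^{\mathrm{2t+1}}\subseteq\mathfrak E_B^{\mathrm{2t+1}}$. Hence every $(\mathfrak E_B^{\mathrm{2t+1}},2t+1)$-admissible family is $(\mathfrak E_A^{\mathrm{2t+1}},2t+1)$-admissible. The plan is therefore:
\begin{itemize}
\item[(i)] prove the upper bound and the characterization of equality for Type~A;
\item[(ii)] verify directly that the extremal families $\mathcal K(n,t+1,X,Y)$ and (for $t=2$) $\mathcal K^*(n,3,Y)$, together with their orthogonal images, have diameter at most $2t+1$ and are not contained in any $\mathcal B(C_1,C_2,t)$ with $\Delta(C_1,C_2)=1$.
\end{itemize}
For (ii), the natural centres are $C_1=\boldsymbol 0$ and $C_2$ a $1$-space. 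Since $\mathcal K$ contains all of $\mathcal V(t)$ together with $(t+1)$-spaces not through a common point, the double ball would have to contain both $\mathcal V(t)$ and these $(t+1)$-spaces. A short dimension count should rule out all other positions of $(C_1,C_2)$. Once (ii) holds, the Type~B statement follows from the Type~A one.

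For (i), let $a=\min\supp(\mathcal F)$ and $b=\max\supp(\mathcal F)$. Because $\Delta(A,B)\ge|\dim A-\dim B|$, we get $b-a\le 2t+1$. Up to replacing $\mathcal F$ by $\mathcal F^\perp$, we may assume $a\le n-b$.

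\textbf{Case $a\ge 1$.} Every level $\mathcal F(k)$ then lies away from $\boldsymbol 0$, and the diameter condition forces members of a common level $k$ to pairwise meet in dimension at least $k-t$. Members of different levels satisfy a similar intersection condition. I would bound each level by the $q$-analogue of the Frankl--Wilson $s$-intersecting theorem, or, when $k\le 2t+1$, by a crude bound through a fixed member. The aim is a total of order $q^{(t-1)(n-t)}$-ish times a constant, or at least $\qbinom{n}{t}\cdot O(q^{-(n-5t)})$, which is smaller than the claimed bound. This is where $n\ge 5t+3$ enters.

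\textbf{Case $a=0$.} Now all dimensions are at most $2t+1$, and a member $A$ of dimension $i$ satisfies $\dim(A\cap B)\ge\lceil(i+j-2t-1)/2\rceil$ with any member $B$ of dimension $j$. In particular $\mathcal F(t+1)$ is $1$-intersecting. Two subcases arise.
\begin{itemize}
\item[(a)] Suppose $\mathcal F$ contains some $B$ with $\dim B=t+1+s$, $s\ge1$. Then every $t$-space in $\mathcal F$ must meet $B$ in dimension at least $\lceil s/2\rceil\ge1$. This kills all but a proportion of about $q^{-(n-2t-2)}$ of $\mathcal V(t)$, i.e.\ roughly $\qbinom{n}{t}$ members are lost. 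Upper levels can add at most about $\qbinom{n-1}{t}$ plus lower-order terms, since they are again intersecting in a strong sense. Comparing these forces $|\mathcal F|$ strictly below the bound for $n\ge 5t+3$.
\item[(b)] Suppose $\supp(\mathcal F)\subseteq[0,t+1]$. Then $|\mathcal F|\le\sum_{i\le t}\qbinom{n}{i}+|\mathcal F(t+1)|$. Admissibility ($\mathcal F\not\subseteq\mathcal D_t(X)$ for every $X$) forces $\mathcal F(t+1)$ to be a non-trivial $1$-intersecting family of $(t+1)$-spaces. The vector-space Hilton--Milner theorem~\cite{BBCFMPS2010} then gives exactly $|\mathcal F(t+1)|\le\qbinom{n-1}{t}-q^{t(t+1)}\qbinom{n-t-2}{t}+q^{t+1}$. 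Its equality cases $\mathcal{HM}(n,t+1,X,Y)$, and for $k=3$ also $\mathcal{HM}^*(n,3,Y)$, yield exactly items (1) and (2) once we note that equality also forces all lower levels to be full.
\end{itemize}

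\textbf{Main obstacle.} I expect the hardest step to be subcase (a) and the Case $a\ge1$ estimates, i.e.\ showing that any family using levels above $t+1$, or not containing $\boldsymbol 0$, loses strictly more than the Hilton--Milner deficit $q^{t(t+1)}\qbinom{n-t-2}{t}-q^{t+1}$. This deficit is itself of order $\qbinom{n-1}{t}$, so crude estimates will not suffice. I would first treat the case where a single high-dimensional member $B$ exists: count precisely the $t$-spaces meeting $B$ nontrivially and bound the levels $t+1,\dots,2t+1$ via intersection with $B$ and the pairwise conditions. Then I would iterate over the possible top dimension $b$, using $n\ge 5t+3$ to make each Gaussian ratio $\qbinom{n}{t}/\qbinom{n}{t-1}\approx q^{n-2t+1}$ dominate the polynomially many error terms.
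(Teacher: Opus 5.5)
Your reduction of Type~B to Type~A via $\mathfrak E_A^{\mathrm{2t+1}}\subseteq\mathfrak E_B^{\mathrm{2t+1}}$ matches the paper, and so do your check of the extremal families against adjacent double balls and your subcases (a) and (b).

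\textbf{The gap: Case $a\ge1$.} The genuine problem is the case split on $a=\min\supp(\mathcal F)$. Your Case $a\ge1$ claims that every admissible family with $\boldsymbol 0\notin\mathcal F$ has size of order $\qbinom nt\cdot O(q^{-(n-5t)})$, far below the bound. This is false.
\begin{itemize}
\item Take $\mathcal F=\mathcal V(t)\cup\mathcal{HM}(n,t+1,X,Y)$. It has $a=t$, $a+b=2t+1\le n$ and diameter at most $2t+1$. It is not contained in any $\mathcal D_t(Z)$, since its $(t+1)$-layer is not a star. It is not contained in any $\mathcal D_t(Z)^\perp$, whose members have dimension at least $n-t-1$. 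Its size is $\qbinom nt+\qbinom{n-1}{t}-q^{t(t+1)}\qbinom{n-t-2}{t}+q^{t+1}$.
\item More simply, $\mathcal K(n,t+1,X,Y)\setminus\{\boldsymbol 0\}$ is admissible with $a=1$ and misses the bound by exactly $1$.
\end{itemize}
When $b=t+1$, the diameter condition places no constraint at all on layers of dimension at most $t$. So no per-level intersection estimate can give what you want in Case $a\ge1$. Whether $\boldsymbol 0$ lies in $\mathcal F$ is simply not the relevant dichotomy.

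\textbf{The fix: split on $b$.} The right dichotomy, which the paper uses, is on the top dimension $b$ after normalizing $a+b\le n$. Both of your subcases go through without assuming $a=0$.

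If $b=t+1$, your subcase (b) only uses $|\mathcal F(i)|\le\qbinom ni$ for $i\le t$ together with the vector-space Hilton--Milner theorem for $\mathcal F(t+1)$. So it works for every $a$, and equality forces all layers $0,\dots,t$ to be full.

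If $b\ge t+2$, argue as follows.
\begin{itemize}
\item From $a+b\le n$, $b-a\le 2t+1$ and $n\ge5t+3$ you get $b\le n-t-2$. The Frankl--Wilson bound then gives $\sum_{k=t+2}^{b}|\mathcal F(k)|<\frac56\qbinom{n-1}{t}$.
\item $\mathcal F(t+1)$ is $1$-intersecting, so $|\mathcal F(t+1)|\le\qbinom{n-1}{t}$.
\item Every member of $\mathcal F(t)$ meets a fixed member of $\mathcal F(b)$, and $b\le 3t+1$ whenever $\mathcal F(t)\neq\varnothing$. Hence $|\mathcal F(t)|\le\qbinom{3t+1}{1}\qbinom{n-1}{t-1}\le\frac12\qbinom{n-1}{t}$.
\item Layers below $t$ are bounded trivially.
\end{itemize}
Since $\qbinom nt>q^t\qbinom{n-1}{t}\ge4\qbinom{n-1}{t}$, this gives $|\mathcal F|<\sum_{i=0}^{t}\qbinom ni$.

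This also shows that the ``main obstacle'' you anticipate does not arise. In this regime you never need to beat the Hilton--Milner deficit. Layer $t$ loses almost all of its $\qbinom nt$ members, roughly $q^t$ times more than all higher layers together can contribute, so crude estimates suffice.
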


The remaining case is Type~B in even diameter, which appears to be substantially more delicate. Although we do not determine the exact extremal families in this case, we still obtain a nontrivial general upper bound that is significantly smaller than the Type~A extremal value in Theorem~\ref{thm:stab-typeA-even}.

\begin{theorem}\label{thm:stab-typeB-even}
Fix a prime power $q$ and an even integer $d=2t\ge 4$.
Assume that $n\ge 6t$.
If $\mathcal F\subseteq \mathcal V$ is $(\mathfrak E_B^{\mathrm{2t}},2t)$-admissible, then
\[
|\mathcal F|
<
\sum_{i=0}^{t-1}\qbinom{n}{i}
+
(2t+1)q^2\qbinom{3t}{t}\qbinom{n}{t-1}.
\]
\end{theorem}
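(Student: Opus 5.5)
We will prove Theorem~\ref{thm:stab-typeB-even}. The plan is to separate $\mathcal F$ into a ``low'' part, which costs at most $\sum_{i\le t-1}\qbinom{n}{i}$ for free, and a ``middle'' part, whose size must be controlled by an extra factor of order $\qbinom{n}{t-1}$ rather than $\qbinom{n}{t}$.

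The first step is normalisation by dimension. Since $\diam(\mathcal F)\le 2t$, any $A,B\in\mathcal F$ satisfy $|\dim A-\dim B|\le 2t$. So $\supp(\mathcal F)$ lies in an interval $[m,m+2t]$. After replacing $\mathcal F$ by $\mathcal F^\perp$ if necessary, we may assume $m\le (n-2t)/2$. Since $n\ge 6t$, the middle layers are then far from both ends.

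Let $a=\min\supp(\mathcal F)$ and pick $C\in\mathcal F(a)$. Every $A\in\mathcal F$ satisfies
\[
\dim A+a-2\dim(A\cap C)\le 2t .
\]
This means $\mathcal F\subseteq\mathcal B(C,2t)$. That is too weak on its own, so we instead exploit the Type~B condition: $\mathcal F\not\subseteq\mathcal B(X,t)$ for every $X$, and in particular for $X=C$. Hence there is some $A_0\in\mathcal F$ with $\Delta(A_0,C)>t$. We then split $\mathcal F$ according to the distance from a well-chosen centre and count.

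The counting goes as follows. Choose the centre $X$ to minimise the number of members of $\mathcal F$ outside $\mathcal B(X,t)$. Write $\mathcal F=\mathcal F_{\mathrm{in}}\cup\mathcal F_{\mathrm{out}}$, where $\mathcal F_{\mathrm{out}}\neq\varnothing$. Fix $A_0\in\mathcal F_{\mathrm{out}}$ with $\Delta(A_0,X)=t+s$ and $s\ge1$.

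Every $B\in\mathcal F$ must satisfy both $\Delta(B,X)\le\ldots$ (from the choice of $X$) and $\Delta(B,A_0)\le 2t$. Intersecting two balls whose centres are at distance $t+s$ forces $B$ to lie near the "geodesic" between $X$ and $A_0$. Concretely, $B$ meets the space $X+A_0$, of dimension at most $\dim X+t+s\le \dim X+3t$, in a large subspace.

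In the reduced case $X=\boldsymbol 0$, the families in $\mathcal V(k)$ with $k\le t-1$ contribute at most $\sum_{i<t}\qbinom{n}{i}$. For the layers of dimension $t-1+j$ with $j=1,\dots,2t+1$, we count pairs $(B\cap W,\,B)$ with $W:=X+A_0$, $\dim W\le 3t$:
\begin{itemize}
\item the number of choices of $B\cap W$ is at most $\qbinom{3t}{t}$;
\item the number of extensions to $B$ is at most $q^{2}\qbinom{n}{t-1}$, after using the distance constraint to bound the codimension.
\end{itemize}
Summing over the $2t+1$ possible layers gives the factor $(2t+1)q^2\qbinom{3t}{t}\qbinom{n}{t-1}$. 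The strict inequality comes from the slack in the extension bound, valid since $n\ge 6t$.

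For general $X$ of dimension $x$, we reduce to this case by working in the interval of subspaces, or quotient by $X\cap B$. The key fact needed is that the number of $B$ with prescribed $\dim B$ and $\Delta(B,X)\le t$ is maximised, up to the allowed factors, when $x$ is small. This is where the assumption $m\le (n-2t)/2$ is used.

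The main obstacle is the middle step: showing that every member of the layer of dimension $t$ (and above) meets $W$ in dimension large enough that only $\qbinom{n}{t-1}$-many free completions remain. This is exactly where the Type~B hypothesis must be used, since it rules out $\mathcal F$ being almost all of $\mathcal V(t)$. I would first try the cleanest version: take $X$ to be a member of $\mathcal F$ of minimal dimension. Then $\Delta(B,A_0)\le 2t$ together with $\Delta(A_0,X)\ge t+1$ forces $\dim(B\cap A_0)\ge 1$ for low-dimensional $B$. This gives the saving of one degree, from $\qbinom{n}{t}$ to $q^{O(1)}\qbinom{3t}{1}\qbinom{n}{t-1}$. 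Higher layers are handled similarly by iterating on $\dim(B\cap W)$.
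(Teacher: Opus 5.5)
There is a genuine gap, and it sits in the counting step of your reduced case. Choosing $X$ to minimise $|\mathcal F\setminus\mathcal B(X,t)|$ is a global condition. It gives no bound on $\Delta(B,X)$ for individual $B$, so the ``$\Delta(B,X)\le\ldots$'' cannot be filled in.

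The only inputs your count actually uses are $\diam(\mathcal F)\le 2t$ and one element $A_0$ with $\Delta(A_0,\boldsymbol 0)>t$. Both hold for $\mathcal B(Y,t)$ with $Y\in\mathcal V(1)$ (take $A_0$ any $(t+1)$-space through $Y$). Its size $\sum_{i<t}\qbinom{n}{i}+\qbinom{n-1}{t-1}+\qbinom{n-1}{t}$ exceeds the claimed bound once $n$ is large. So the count cannot be correct as described.

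The precise failure is the ``extension'' bullet. The condition $\Delta(B,A_0)\le 2t$ gives $\dim(B\cap A_0)\ge\lceil(\dim B+\dim A_0-2t)/2\rceil$. This is at least $\dim B-t+1$, which is what you need for only about $\qbinom{n}{t-1}$ completions, exactly when $\dim B<\dim A_0$. Hence an $A_0\in\mathcal F(M)$ controls the layers below the top when $M$ is bounded. It never controls the top layer $\mathcal F(M)$ itself. That layer is merely $(M-t)$-intersecting and can be as large as a full star $\{A\in\mathcal V(t+1):Y\le A\}$, of order $q^{-t}\qbinom{n}{t}$.

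Your last paragraph has a related slip. The condition $\Delta(A_0,X)\ge t+1$ does not force $\dim A_0\ge t+1$: take two $t$-spaces meeting in dimension less than $t/2$. So $\dim(B\cap A_0)\ge 1$ for $t$-dimensional $B$ does not follow. You need $A_0\in\mathcal F(M)$ with $M\ge t+1$. This is how the paper bounds $\mathcal F(t)$, and it uses only $\mathcal F\not\subseteq\mathcal L_t$. So Type~B is not where ``almost all of $\mathcal V(t)$'' gets excluded.

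The missing idea is how the Type~B hypothesis controls the top layer. The paper applies a dichotomy to each $(k-t)$-intersecting layer $\mathcal F(k)$ with $k\ge t+1$.

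If $\mathcal F(k)$ is nontrivial, the Hilton--Milner-type bounds for nontrivial $s$-intersecting subspace families give $|\mathcal F(k)|\le q^{4t+2}\qbinom{n}{t-1}$. These are Lemmas~\ref{lem:nontrivial-s-intersecting} and~\ref{lem:nontrivial-s-intersecting-2}, or Theorem~\ref{thm:ekrvs} when $k\ge n/2$.

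If $\mathcal F(k)$ is trivial with kernel $W\in\mathcal V(k-t)$ and larger than $\qbinom{3t}{t}\qbinom{n}{t-1}$, the propagation argument of Claim~\ref{claim:typeB-propagation} applies. No $B\in\mathcal F$ can meet $W$ in too small a dimension. Otherwise every member of $\mathcal F(k)$ would have to meet $B+W$ in $\ell\ge 1$ extra dimensions, making the layer small.
\begin{itemize}
\item For $k=M$ this gives $\Delta(B,W)\le t$ for every $B\in\mathcal F$, i.e.\ $\mathcal F\subseteq\mathcal B(W,t)$, contradicting admissibility. Here the centre is the kernel $W$ forced by $\mathcal F$, not a minimiser chosen in advance.
\item For $k=M-i$ with $i\ge 1$ it forces $W\le B$ for $B\in\mathcal F(M)$, which again bounds the layer.
\end{itemize}

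Finally, under your normalisation $M$ can be as large as $\lfloor n/2\rfloor+t$. So the picture of $2t+1$ layers of dimensions $t,\dots,3t$ inside a space $W$ of dimension at most $3t$ does not cover the general case. The same goes for the unexplained reduction from general $X$ to $X=\boldsymbol 0$. High layers need the EKR bound rather than a count inside $W$.
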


For even diameter, to quantify the difference between the Type~A and Type~B bounds, we compare the upper bounds in Theorem~\ref{thm:stab-typeA-even} and Theorem~\ref{thm:stab-typeB-even} for fixed $q$ and $t$.
Beyond the common sum $\sum_{i=0}^{t-1}\qbinom{n}{i}$, which is asymptotically negligible compared to the dominant terms for large $n$, the ratio of their remaining terms satisfies
\begin{align*}
\frac{1}{(2t+1)q^2\qbinom{3t}{t}}
\cdot
\frac{\qbinom{n-1}{t-1}+\qbinom{n-1}{t}}{\qbinom{n}{t-1}}
\ge 
\frac{1}{(2t+1)q^2\qbinom{3t}{t}} \cdot
\frac{(q^{n-t+1}-1)(q^{n-t}-1)}{(q^n-1)(q^t-1)}
\ge c_{q,t} \cdot q^{n},
\end{align*}
where $c_{q,t} > 0$ is a constant depending on $q$ and $t$. For $n \ge 6t$, this ratio grows exponentially with $n$, meaning the Type~A bound asymptotically dominates the Type~B bound. 

Therefore, unlike in the Boolean setting, the vector-space theory leads not to a single stability problem but to a small hierarchy of natural ones. One of the main phenomena uncovered in this paper is that this hierarchy exhibits a sharp parity split: in the even case Type~A and Type~B are genuinely different, whereas in the odd case they coincide.

The remainder of this paper is organized as follows. In Section~\ref{sec:tools}, we collect several technical tools for analyzing the individual layers and complementary layers.
Section~\ref{sec:main-proof} is devoted to the proof of Theorem~\ref{thm:main}, establishing the full vector-space analogue of Kleitman's theorem and characterizing all extremal families.
Section~\ref{sec:stab-even-A} addresses sharp canonical stability in even diameter and proves Theorem~\ref{thm:stab-typeA-even}.
Section~\ref{sec:stab-odd} treats both canonical and global stability in odd diameter and proves Theorem~\ref{thm:stab-type-odd}.
Finally, Section~\ref{sec:stab-even-B} provides a nontrivial upper bound for  global stability in the even diameter case and proves Theorem~\ref{thm:stab-typeB-even}.

\section{Useful tools}\label{sec:tools}

In this section we collect several tools that will be used repeatedly in the proofs of our extremal and stability results. 
We begin with the metric structure on $\mathcal{V}$ and explain how a diameter condition translates into intersection conditions on the individual layers $\mathcal{F}(k)$. 
We then turn to complementary layers, where sharper cross-intersection results yield the main rigidity input. 
Finally, we record the orthogonal-complement isometry, which allows us to normalize the location of the support of $\mathcal{F}$.

\subsection{The layerwise intersection}

We first record the standard counting formula for subspaces with prescribed intersection dimension.

\begin{lemma}[\cite{GM2016}]\label{lem:subspacecounting}
Fix $A \in \mathcal{V}(k)$. Then
\[
\bigl|\{\,B \in \mathcal{V}(\ell): \dim(A \cap B) = j\,\}\bigr|
=
q^{(k-j)(\ell-j)}
\qbinom{n-k}{\ell-j}
\qbinom{k}{j}.
\]
\end{lemma}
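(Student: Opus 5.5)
To prove Lemma~\ref{lem:subspacecounting}, I would count $\ell$-dimensional subspaces $B$ with $\dim(A\cap B)=j$ in two stages. First choose the intersection $C=A\cap B\in\mathcal V(j)$ with $C\le A$; there are $\qbinom{k}{j}$ choices. Then, for a fixed $C$, count the $B\in\mathcal V(\ell)$ with $C\le B$ and $B\cap A=C$ exactly. Passing to the quotient $V/C\cong\mathbb F_q^{n-j}$ turns this into counting $(\ell-j)$-dimensional subspaces $\bar B$ of $V/C$ with $\bar B\cap \bar A=\boldsymbol 0$, where $\bar A=A/C$ has dimension $k-j$. The map $B\mapsto B/C$ is a bijection between subspaces containing $C$ and subspaces of $V/C$, and $B\cap A=C$ holds if and only if $(B/C)\cap(A/C)=\boldsymbol 0$.

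The core step is the standard count: in $\mathbb F_q^{m}$ with a fixed $a$-dimensional $\bar A$, the number of $s$-dimensional subspaces meeting $\bar A$ trivially is $q^{as}\qbinom{m-a}{s}$. I would prove this by counting ordered bases $(v_1,\dots,v_s)$ with $\langle v_1,\dots,v_i\rangle\cap\bar A=\boldsymbol 0$ at each stage. At step $i$ the vector $v_i$ must avoid $\bar A+\langle v_1,\dots,v_{i-1}\rangle$, which has dimension $a+i-1$, giving $q^m-q^{a+i-1}$ choices. We then divide by $|GL_s(\mathbb F_q)|=\prod_{i=0}^{s-1}(q^s-q^i)$. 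Factoring $q^{a}$ out of each numerator term yields
\[
q^{as}\prod_{i=0}^{s-1}\frac{q^{m-a}-q^{i}}{q^{s}-q^{i}}=q^{as}\qbinom{m-a}{s}.
\]

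Substituting $m=n-j$, $a=k-j$ and $s=\ell-j$ gives $q^{(k-j)(\ell-j)}\qbinom{n-k}{\ell-j}$ for each $C$. Multiplying by the $\qbinom{k}{j}$ choices of $C$ gives the claim, since each $B$ is counted exactly once, through $C=A\cap B$.

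The only place needing care is verifying the bijection through the quotient, in particular that trivial intersection in $V/C$ corresponds exactly to $A\cap B=C$. This follows from the modular law, and it is routine rather than a genuine obstacle.
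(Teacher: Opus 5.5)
Your proof is correct. The paper gives no argument of its own here; it imports the formula from the cited reference. Your two-stage count is the standard derivation of that formula and works as a complete, self-contained justification: first fix $C=A\cap B$ in $\qbinom{k}{j}$ ways. Then pass to $V/C$ and count $(\ell-j)$-subspaces meeting $A/C$ trivially, using ordered bases that are independent modulo $A/C$ and dividing by $|\mathrm{GL}_{\ell-j}(\mathbb F_q)|$.

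Two routine points are implicit in what you wrote but do hold. The condition on $(v_1,\dots,v_s)$ depends only on their span, so every qualifying subspace contributes exactly $|\mathrm{GL}_s(\mathbb F_q)|$ ordered bases. When $\ell-j>n-k$, a factor $q^{m-a}-q^{i}$ vanishes, so both sides are $0$.
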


To exploit the diameter condition, we use the standard intersection language on Grassmann layers. 
For $\mathcal{A},\mathcal{B}\subseteq\mathcal{V}$, we say that $(\mathcal{A},\mathcal{B})$ is \emph{cross-$s$-intersecting in $V$} if
\(
\dim(A\cap B)\ge s\) for all
\( A\in\mathcal{A},\ B\in\mathcal{B}.
\)
In particular, when $\mathcal{A}=\mathcal{B}$, we say that $\mathcal{A}$ is \emph{$s$-intersecting in $V$}. 
An $s$-intersecting family $\mathcal{A}$ is called \emph{trivial} if all its members contain a common fixed $s$-dimensional subspace of $V$, and \emph{nontrivial} otherwise. 
A trivial $1$-intersecting family is also called a \emph{star}.

The next lemma is the basic mechanism that links the global diameter condition to intersection conditions on the individual layers.

\begin{lemma}[cf. \cite{LLY2026}]\label{lem:intersecting}
Let $\mathcal{F} \subseteq \mathcal{V}$ with $\diam(\mathcal{F}) \le d$. Then for all $i,j\in \supp(\mathcal{F})$, the pair $(\mathcal{F}(i),\mathcal{F}(j))$ is cross-$\left\lceil \frac{i+j-d}{2}\right\rceil$-intersecting in $V$. 
In particular, for every $k\in \supp(\mathcal{F})$, the family $\mathcal{F}(k)$ is $(k-\lfloor d/2\rfloor)$-intersecting in $V$.
\end{lemma}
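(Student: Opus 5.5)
The plan is to unwind the definition of $\Delta$ directly. Take $i,j\in\supp(\mathcal F)$, and pick any $A\in\mathcal F(i)$ and $B\in\mathcal F(j)$ (allowing $A=B$ when $i=j$). Since $\diam(\mathcal F)\le d$, we have $\Delta(A,B)\le d$. Substituting the formula $\Delta(A,B)=\dim A+\dim B-2\dim(A\cap B)=i+j-2\dim(A\cap B)$, this becomes $i+j-2\dim(A\cap B)\le d$, that is,
\[
\dim(A\cap B)\ \ge\ \frac{i+j-d}{2}.
\]
The left-hand side is an integer, so we may round the right-hand side up and obtain $\dim(A\cap B)\ge \lceil (i+j-d)/2\rceil$. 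As $A$ and $B$ were arbitrary, the pair $(\mathcal F(i),\mathcal F(j))$ is cross-$\lceil (i+j-d)/2\rceil$-intersecting in $V$. If this quantity is nonpositive, the statement is vacuous, so no case split is needed.

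For the ``in particular'' part, set $i=j=k$. The bound becomes $\lceil (2k-d)/2\rceil=k-\lfloor d/2\rfloor$, which follows by checking the two parities of $d$: for $d=2t$ it equals $k-t$, and for $d=2t+1$ we get $\lceil k-t-\tfrac12\rceil=k-t$. Since cross-intersecting a family with itself is exactly the definition of being $s$-intersecting, $\mathcal F(k)$ is $(k-\lfloor d/2\rfloor)$-intersecting.

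No real obstacle arises here. The only points needing care are the integrality step that justifies taking the ceiling, and the observation that the argument applies to pairs with $A=B$, where it holds trivially.
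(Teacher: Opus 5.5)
Your proof is correct and is the standard argument: the inequality $\Delta(A,B)=i+j-2\dim(A\cap B)\le d$, rounded up by integrality, is the computation behind this lemma, and the parity check giving $\lceil (2k-d)/2\rceil=k-\lfloor d/2\rfloor$ is right. The paper does not reprove the lemma itself; it cites \cite{LLY2026}, but it uses this same manipulation repeatedly elsewhere, for example to get $2\dim(A\cap B)\ge M-t$.
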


Lemma~\ref{lem:intersecting} allows us to reduce many layerwise estimates to classical extremal results for intersecting families of subspaces. 
In particular, applying it to a fixed layer $\mathcal{F}(k)$ leads naturally to the vector-space Erd\H{o}s--Ko--Rado theorem.

\begin{theorem}[\cite{FW1986}]\label{thm:ekrvs}
Let $n \ge 2k-s$ and let $\mathcal{A}\subseteq \mathcal{V}(k)$. If $\mathcal{A}$ is $s$-intersecting in $V$, then
\[
|\mathcal{A}|
\le
\max\left\{
\qbinom{n-s}{k-s},
\qbinom{2k-s}{k-s}
\right\}.
\]
\end{theorem}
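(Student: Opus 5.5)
The statement just above is the vector-space Erd\H{o}s--Ko--Rado theorem of Frankl and Wilson, which the paper quotes from \cite{FW1986} rather than proves. If I had to reprove it, I would follow the Frankl--Wilson rank argument based on inclusion matrices. The plan splits by whether $n\ge 2k$ (then the star bound $\qbinom{n-s}{k-s}$ should be the maximum) or $2k-s\le n<2k$ (then the family $\mathcal V(k)$ restricted to subspaces of a fixed $(2k-s)$-space may win).

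\emph{Step 1 (setup).} Let $W_{ij}$ be the inclusion matrix between $\mathcal V(i)$ and $\mathcal V(j)$, with rows indexed by $i$-spaces and columns by $j$-spaces, and entry $1$ when the row space lies in the column space. For a $k$-space $B$, count $|\{A\in\mathcal V(k):\dim(A\cap B)=j\}|$ using Lemma~\ref{lem:subspacecounting}. The matrices $W_{ik}^{T}W_{ik}$ lie in the Bose--Mesner algebra of the $q$-Johnson (Grassmann) scheme, and their eigenvalues on the eigenspaces $V_0,\dots,V_k$ are explicitly computable $q$-binomial expressions.

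\emph{Step 2 (construct a vanishing matrix).} I would build a real symmetric matrix $M=\sum_{i} c_i W_{ik}^TW_{ik}$ indexed by $\mathcal V(k)$. The coefficients $c_i$ are chosen so that the $(A,B)$-entry of $M$ depends only on $\dim(A\cap B)$ and equals $1$ when $\dim(A\cap B)\ge s$. This works because the $(A,B)$-entry of $W_{ik}^TW_{ik}$ is $\qbinom{\dim(A\cap B)}{i}$, so the desired values are obtained by $q$-binomial inversion. Restricted to $\mathcal A\times\mathcal A$, the matrix $M$ is the all-ones matrix $J$.

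\emph{Step 3 (rank bound).} I would then use a rank or inertia argument. Given $\mathcal A$, consider the principal submatrix $M[\mathcal A]=J$, and also the matrix $N$ that is $0$ on pairs with $\dim(A\cap B)\ge s$. Taking $N=\sum_{i<s}\alpha_i W_{ik}^TW_{ik}$ (a Delsarte-type construction) gives a positive semidefinite matrix that vanishes on $\mathcal A\times\mathcal A$. Then $|\mathcal A|\le \operatorname{rank}$ of a suitable complement, or, more cleanly, Hoffman's ratio bound for the generalized Kneser graph gives $|\mathcal A|\le |\mathcal V(k)|\cdot(-\lambda_{\min})/(\lambda_{\max}-\lambda_{\min})$. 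When $n\ge 2k$, the eigenvalue computation should yield exactly $\qbinom{n-s}{k-s}$.

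\emph{Step 4 (the range $n<2k$).} For $2k-s\le n<2k$, I would dualize. Under $A\mapsto A^\perp$, $k$-spaces go to $(n-k)$-spaces and $\dim(A^\perp\cap B^\perp)=n-2k+\dim(A\cap B)$, so $\mathcal A^\perp$ is $(n-2k+s)$-intersecting in $\mathcal V(n-k)$ with $n-k<k$. This reduces to the previous case with new parameters $k'=n-k$ and $s'=n-2k+s$. The star bound $\qbinom{n-s'}{k'-s'}=\qbinom{2k-s}{k-s}$ then produces the second term of the maximum, which also matches the fact that all $k$-subspaces of a fixed $(2k-s)$-space form an $s$-intersecting family.

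\emph{Main obstacle.} The hard step is Step 3 for general $s$. One has to find weights $\alpha_i$ making the pseudo-adjacency matrix of the graph ``$\dim(A\cap B)<s$'' have the right extreme eigenvalues, and then check the sign pattern of the $q$-Eberlein-type eigenvalues across all eigenspaces in the range $n\ge 2k$. I would first try the original Frankl--Wilson choice, namely linear combinations of $W_{s-1,k}^{T}W_{s-1,k}$-type terms, and verify positivity by direct $q$-binomial estimates. Since the paper only cites this result, no proof is needed for its later use.
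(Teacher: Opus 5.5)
\textbf{There is a genuine gap: the proposal is an outline, and its central step is both misconceived and left undone.}

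The paper does not prove this theorem; it quotes Frankl--Wilson, so your outline has to stand on its own.

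Step~4 is fine. We have $\dim(A^\perp\cap B^\perp)=n-2k+\dim(A\cap B)$, so for $2k-s\le n<2k$ the family $\mathcal A^\perp\subseteq\mathcal V(n-k)$ is $s'$-intersecting with $s'=n-2k+s\ge 0$ and $n\ge 2(n-k)$. The star bound for these parameters is $\qbinom{2k-s}{k-s}$. But this only reduces everything to proving $|\mathcal A|\le\qbinom{n-s}{k-s}$ for $n\ge 2k$, and Steps~2--3 do not prove that.

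\emph{Step~2 is vacuous.} The matrix $M=J=W_{0k}^TW_{0k}$ already has the stated property, and a rank-one restriction gives no bound.

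\emph{Step~3 asks for something impossible.} The matrix $N=\sum_{i<s}\alpha_iW_{ik}^TW_{ik}$ lies in the Bose--Mesner algebra, so its diagonal is constant. If $N$ vanishes on $\mathcal A\times\mathcal A$, its whole diagonal is zero, and a positive semidefinite matrix with zero diagonal is $0$. Independently, its entries $\sum_{i<s}\alpha_i\qbinom{j}{i}$, with $j=\dim(A\cap B)$, form a polynomial of degree $<s$ in $q^j$. Such a polynomial cannot vanish at all of $j=s,\dots,k$ when $k\ge 2s-1$ unless $N=0$.

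\emph{The fallback options do not give the exact bound either.} The inclusion-matrix rank bound only gives $\qbinom{n}{k-s}$. The unweighted ratio bound for the graph ``$\dim(A\cap B)<s$'' is not sharp for $s\ge 2$: already for $s=2$, $k=3$ it exceeds $\qbinom{n-2}{1}$ by a factor tending to $q+1$.

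\emph{What a proof would need.} You need a positive semidefinite matrix $P=I+\sum_{j=0}^{s-1}w_jA_j$, where $A_j$ is the $0/1$ matrix of the relation $\dim(A\cap B)=j$.
\begin{itemize}
\item $P$ restricts to the identity on $\mathcal A\times\mathcal A$. So for the characteristic vector $x$ of $\mathcal A$ one gets $|\mathcal A|=x^TPx\ge\theta_0|\mathcal A|^2/\qbinom nk$, where $\theta_0$ is the eigenvalue of $P$ on the all-ones vector. You need $\theta_0=\qbinom nk/\qbinom{n-s}{k-s}$.
\item The characteristic vector of an $s$-star lies in the sum of eigenspaces $V_0\oplus\cdots\oplus V_s$, with nonzero projection onto each $V_j$, $1\le j\le s$. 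Sharpness at stars therefore forces $P$ to vanish on $V_1\oplus\cdots\oplus V_s$. These are $s$ linear conditions on $w_0,\dots,w_{s-1}$, and they automatically give the correct $\theta_0$.
\item The whole theorem is then the claim that the eigenvalues of $P$ on $V_{s+1},\dots,V_k$ are nonnegative for every $n\ge 2k$. This is precisely what you leave as ``should yield''.
\end{itemize}

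Your Step~1 toolkit can compute these eigenvalues, provided you expand $P$ in all $W_{ik}^TW_{ik}$ with $0\le i\le k$, not only $i<s$. But the nonnegativity is not a formality. For sets, no certificate of this shape can exist when $2k\le n<(t+1)(k-t+1)$, because $\binom{n-t}{k-t}$ is not the maximum there. So the nonnegativity must come from a genuinely $q$-specific eigenvalue estimate, and supplying that estimate is exactly what Frankl and Wilson do.
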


The maximum size and the extremal families of $1$-intersecting families of $k$-spaces were determined by Hsieh~\cite{H1975} for $n \geq 2k+1$.

\begin{theorem}[\cite{H1975}]\label{thm:EKR-vector-space-1}
Let $n \ge 2k+1$ and let $\mathcal{A} \subseteq \mathcal V(k)$. If $\mathcal{A}$ is $1$-intersecting in $V$, then
\(
|\mathcal{A}| \le \qbinom{n-1}{k-1}.
\)
Moreover, equality holds if and only if there exists a $1$-dimensional subspace $X \in \mathcal V(1)$ such that
$\mathcal{A} = \{A \in \mathcal V(k) : X \le A\}$.
\end{theorem}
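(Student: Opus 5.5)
The statement immediately above is Hsieh's theorem, which the paper quotes from \cite{H1975} and does not prove. If I were to prove it, I would use the Katona cycle method adapted to vector spaces, following Godsil--Newman. The alternative is the kernel (Hoffman ratio) bound on the $q$-Kneser graph, as in Frankl--Wilson.

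\textbf{Counting over spreads.} Suppose first that $k \mid n$. Take a $k$-spread $\mathcal S$ of $V$, that is, $(q^n-1)/(q^k-1)$ pairwise trivially intersecting $k$-spaces partitioning $V\setminus\{0\}$. Any two members of $\mathcal S$ meet trivially, so a $1$-intersecting family contains at most one of them. Now double count the pairs $(A,\sigma\mathcal S)$ with $A\in\mathcal A\cap\sigma\mathcal S$ and $\sigma\in GL(V)$. By transitivity each $A$ lies in equally many translates, which gives
\[
\frac{|\mathcal A|}{\qbinom nk}\le \frac{q^k-1}{q^n-1},\qquad\text{so}\qquad |\mathcal A|\le \qbinom{n-1}{k-1}.
\]
This argument does not work for general $n$. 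There I would instead use the eigenvalue method. The $q$-Kneser graph (adjacency meaning trivial intersection) has degree $q^{k^2}\qbinom{n-k}{k}$ and least eigenvalue $-q^{k(k-1)}\qbinom{n-k-1}{k-1}$, both computed from Lemma~\ref{lem:subspacecounting} together with the known eigenvalues of the Grassmann scheme. The Hoffman ratio bound $|\mathcal A|\le N\cdot\frac{-\lambda_{\min}}{d-\lambda_{\min}}$ then simplifies to exactly $\qbinom{n-1}{k-1}$ whenever $n\ge 2k$.

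\textbf{Equality.} When equality holds, the characteristic vector of $\mathcal A$ lies in the span of the constant vector and the $\lambda_{\min}$-eigenspace. For $n\ge 2k+1$ this eigenspace is the first nontrivial eigenspace $V_1$, which is spanned by the point-incidence vectors. So $1_{\mathcal A}=\sum_{P}c_P 1_{\{A: P\le A\}}$ plus a constant. The step from this to ``$1_{\mathcal A}$ is the indicator of a single star'' is the main obstacle. I would attack it Boolean-style. A $0/1$ vector in $V_0\oplus V_1$ is the indicator of all $k$-spaces through a point $X$, or of all $k$-spaces inside a hyperplane, or a complement of one of these. Of these, only the point-stars are $1$-intersecting of the right size when $n\ge 2k+1$; the hyperplane family has size $\qbinom{n-1}{k}$, which differs from $\qbinom{n-1}{k-1}$.

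A more combinatorial route is also available. Pick $A\in\mathcal A$. Every member of $\mathcal A$ meets $A$, and tightness in the averaging forces every $k$-space meeting all members of $\mathcal A$ to lie in $\mathcal A$. A Hilton--Milner-type count then shows that a nontrivial family has size at most $\qbinom{n-1}{k-1}-q^{k(k-1)}\qbinom{n-k-1}{k-1}+q^k$. This is strictly smaller than $\qbinom{n-1}{k-1}$ for $n\ge 2k+1$, which forces the family to be a star.
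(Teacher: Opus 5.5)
The paper does not prove this statement (it is quoted from \cite{H1975}), so there is no argument in the paper to compare yours with. Your bound is fine. The spread count is correct when $k\mid n$, and the Hoffman computation $-\lambda_{\min}/(d-\lambda_{\min})=(q^k-1)/(q^n-1)$ gives $\qbinom{n-1}{k-1}$ for every $n\ge 2k$.

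The gap is in the equality characterization, which is half of the statement. The step you call the main obstacle rests on a false claim: $0/1$ vectors in $V_0\oplus V_1$ are not only stars, hyperplane families and their complements. For a hyperplane $H$,
\[
\sum_{Q\in\mathcal V(1),\,Q\not\le H}\mathbf 1_{\{A\in\mathcal V(k):\,Q\le A\}}=q^{k-1}\bigl(\mathbf 1-\mathbf 1_{\{A\in\mathcal V(k):\,A\le H\}}\bigr),
\]
so the hyperplane family lies in $V_0\oplus V_1$. Hence for $P\not\le H$ the disjoint union $\{A:P\le A\}\cup\{A:A\le H\}$ is a $0/1$ vector of $V_0\oplus V_1$ that is not on your list.

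More fundamentally, the $0/1$ vectors of $V_0\oplus V_1$ are exactly the Cameron--Liebler classes of $k$-spaces. Genuinely exotic ones exist, e.g.\ the Bruen--Drudge line classes ($n=4$, $k=2$). Their classification is open in general, so there is no analogue of the Boolean-slice lemma to invoke. To finish the algebraic route you must use that $\mathcal A$ is a coclique of size $\qbinom{n-1}{k-1}$, not merely that $\mathbf 1_{\mathcal A}\in V_0\oplus V_1$. This is the additional work done by Godsil--Newman, and by Tanaka via the classification of families whose width plus dual width equals the diameter (here the width is at most $k-1$ and the dual width is $1$).

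Your Hilton--Milner fallback does not close the gap either. It replaces the statement by a harder theorem. In the form recorded in the paper (Lemma~\ref{lem:nontrivial-1-intersecting}, from \cite{BBCFMPS2010}) that theorem needs $k\ge 3$ and $n\ge 2k+2$, so it says nothing at the boundary $n=2k+1$. That boundary is not a technicality: Lemma~\ref{lem:equality-odd} applies Theorem~\ref{thm:EKR-vector-space-1} with $k=t+1$ and $n=d+2=2k+1$. The cases $k\le 2$ are elementary: $2$-spaces that pairwise meet but do not all contain a common $1$-space lie in a single $3$-space, so there are at most $q^2+q+1<\qbinom{n-1}{1}$ of them. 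So what is missing is $k\ge 3$, $n=2k+1$.

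Two smaller points. Your remark that a maximum family contains every $k$-space meeting all its members is just maximality and does no work. Also, it is $V_0\oplus V_1$, not $V_1$, that is spanned by the point-star vectors.
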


The upper bounds for nontrivial $s$-intersecting families in $\mathcal{V}(k)$ have been completely determined in a series of works \cite{BBCFMPS2010,2022SIDMACao,2023JCTAWangXuZhang,2024LAAWangXuYang}. We summarize parts of Theorem~1.4 in \cite{BBCFMPS2010}, Theorem~1.3 in \cite{2022SIDMACao}, Theorem~2.6 in \cite{2023JCTAWangXuZhang},  and Theorem~1.2 in \cite{2024LAAWangXuYang} in the following lemma.

\begin{lemma}\label{lem:nontrivial-s-intersecting}
Let $n, k$ and $s$ be positive integers with $s \geq 1$, $k \geq s + 2$, and $n \geq 2k + 2$. Let $\mathcal{A} \subseteq \mathcal{V}(k)$ be a nontrivial $s$-intersecting family. Then the following hold:
\begin{enumerate}
  \item[(1)] If $1 \leq s \leq \frac{k}{2} - 1$, then
\[
|\mathcal{A}|
\leq
\qbinom{n-s}{k-s}
- q^{(k+1-s)(k-s)} \qbinom{n-k-1}{k-s}
+ q^{k+1-s} \qbinom{s}{1}.
\]

 \item[(2)]  If $\frac{k}{2} - 1 < s \leq k - 2$, then
\[
|\mathcal{A}|
\leq
\qbinom{s+2}{1}\qbinom{n-s-1}{k-s-1}
- q \qbinom{s+1}{1}\qbinom{n-s-2}{k-s-2}.
\]
\end{enumerate}
\end{lemma}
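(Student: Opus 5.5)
The plan is not to reprove this from scratch. Lemma~\ref{lem:nontrivial-s-intersecting} is a compilation of known Hilton--Milner type theorems for nontrivial $s$-intersecting families in $\mathcal V(k)$. The proof therefore consists of matching each part to the appropriate cited theorem and checking that the hypotheses $k\ge s+2$, $n\ge 2k+2$ and the stated range of $s$ put us inside the regime where that theorem gives exactly the displayed expression.

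\textbf{Part (1).} The range is $1\le s\le \frac k2-1$, i.e. $k\ge 2s+2$. The extremal nontrivial family here is of Hilton--Milner type: fix an $s$-space $S$ and a $(k+1)$-space $W\ge S$, and take all $k$-spaces through $S$ meeting some fixed complement of $S$ in $W$ nontrivially, plus the $k$-subspaces of $W$.
\begin{itemize}
\item For $s=1$ I will invoke Theorem~1.4 of \cite{BBCFMPS2010}. It gives $\qbinom{n-1}{k-1}-q^{k(k-1)}\qbinom{n-k-1}{k-1}+q^{k}$ for $n\ge 2k+1$ (with $q\ge 3$, the remaining small case being covered in the later works). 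This agrees with the formula, since $\qbinom{1}{1}=1$.
\item For general $s$ in this range I will use Theorem~1.3 of \cite{2022SIDMACao}. It gives the bound $\qbinom{n-s}{k-s}-q^{(k+1-s)(k-s)}\qbinom{n-k-1}{k-s}+q^{k+1-s}\qbinom{s}{1}$, valid for $n\ge 2k+2$ (or larger thresholds depending on $q$, which \cite{2023JCTAWangXuZhang,2024LAAWangXuYang} relax to $n\ge 2k+2$).
\end{itemize}

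\textbf{Part (2).} The range is $\frac k2-1<s\le k-2$. Here the competing construction is the ``$(s+2)$-space'' family: all $k$-spaces meeting a fixed $(s+2)$-space $T$ in dimension at least $s+1$. Counting these via Lemma~\ref{lem:subspacecounting} (choose an $(s+1)$-subspace of $T$, then correct for overcounting) yields exactly $\qbinom{s+2}{1}\qbinom{n-s-1}{k-s-1}-q\qbinom{s+1}{1}\qbinom{n-s-2}{k-s-2}$. I will cite Theorem~2.6 of \cite{2023JCTAWangXuZhang} and Theorem~1.2 of \cite{2024LAAWangXuYang}. These state the maximum as the larger of the Hilton--Milner type value and this value, and the verification step is to show that in this range of $s$ the second dominates. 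For that I compare leading powers of $q$: the Hilton--Milner type value is of order $q^{(k-s-1)(n-k)+\dots}$ times $\qbinom{k-s+1}{1}$-type factors, against $\qbinom{s+2}{1}q^{(k-s-1)(n-k)}$. When $k\le 2s+1$ the second dominates, which is what the cited papers record.

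\textbf{Main obstacle.} The only genuine work is bookkeeping: checking that the threshold $n\ge 2k+2$ suffices for every $q$ in each cited theorem, and that the boundary value $s=\frac k2-1$ (for even $k$) is assigned to the correct formula. If some cited result needs a larger $n$ for $q=2$, I would either restrict accordingly or appeal to the later improvement in \cite{2024LAAWangXuYang}, which I expect handles exactly $n\ge 2k+2$.
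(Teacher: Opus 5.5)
Your plan matches the paper, which gives no independent proof of this lemma and simply records it as a summary of Theorem~1.4 of \cite{BBCFMPS2010}, Theorem~1.3 of \cite{2022SIDMACao}, Theorem~2.6 of \cite{2023JCTAWangXuZhang} and Theorem~1.2 of \cite{2024LAAWangXuYang}; your two extremal constructions and your count of the $(s+2)$-space family are correct. Two small remarks: \cite{BBCFMPS2010} already covers $q=2$ once $n\ge 2k+2$; and a leading-power comparison alone cannot settle the boundary case $k=2s+1$ of part~(2), because there the leading coefficients $\qbinom{k-s+1}{1}$ and $\qbinom{s+2}{1}$ coincide (for $s=1$, $k=3$ the two bounds are even exactly equal), so the dominance in that case must be taken from the cited theorems or from a second-order comparison.
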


The following lemma is Lemma~2.7 of \cite{2022SIDMACao}, which provides an upper bound for the right-hand side of inequality~(1) in Lemma~\ref{lem:nontrivial-s-intersecting}.

\begin{lemma}[cf. \cite{2022SIDMACao}]\label{lem:nontrivial-s-intersecting-2}
Let $n$, $k$, and $s$ be positive integers with $1 \le s \le k-3$ and $2k \le n$. Then
\[
\qbinom{n-s}{k-s}
- q^{(k+1-s)(k-s)} \qbinom{n-k-1}{k-s}
+ q^{k+1-s} \qbinom{s}{1}
\le
\qbinom{k-s+1}{1}\qbinom{n-s-1}{k-s-1}.
\]
\end{lemma}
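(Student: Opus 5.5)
We must prove Lemma~\ref{lem:nontrivial-s-intersecting-2}: for $1\le s\le k-3$ and $n\ge 2k$,
\[
\qbinom{n-s}{k-s}- q^{(k+1-s)(k-s)}\qbinom{n-k-1}{k-s}+q^{k+1-s}\qbinom{s}{1}\le \qbinom{k-s+1}{1}\qbinom{n-s-1}{k-s-1}.
\]
Set $m:=k-s\ge 3$ and $N:=n-s\ge m+k\ge 2m+1$. The plan is to rewrite the difference of the first two terms as an exact sum and then compare it term by term with the right-hand side.

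\textbf{Step 1 (exact expansion).} Fix an $(m+1)$-space $W$ inside an ambient $N$-space. Sorting $m$-spaces $A$ by $j=\dim(A\cap W)$ and applying Lemma~\ref{lem:subspacecounting} gives
\[
\qbinom{N}{m}-q^{(m+1)m}\qbinom{N-m-1}{m}=\sum_{j\ge 1}q^{(m+1-j)(m-j)}\qbinom{N-m-1}{m-j}\qbinom{m+1}{j}.
\]
Here the subtracted term is exactly the $j=0$ class, since $N-m-1=n-k-1$.

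\textbf{Step 2 (dominant term).} The $j=1$ term is $q^{m(m-1)}\qbinom{m+1}{1}\qbinom{N-m-1}{m-1}$. The right-hand side is $\qbinom{m+1}{1}\qbinom{N-1}{m-1}$. Apply the same expansion to $\qbinom{N-1}{m-1}$, counting $(m-1)$-spaces in an $(N-1)$-space against a fixed $m$-space. Its $j=0$ term is exactly $q^{m(m-1)}\qbinom{N-m-1}{m-1}$. So
\[
\qbinom{m+1}{1}\qbinom{N-1}{m-1}=\qbinom{m+1}{1}\sum_{j\ge0}q^{(m-j)(m-1-j)}\qbinom{N-m-1}{m-1-j}\qbinom{m}{j},
\]
and its $j=0$ part cancels the $j=1$ term of Step 1 exactly.

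\textbf{Step 3 (matching the remaining terms).} It remains to show
\[
\sum_{j\ge2}q^{(m+1-j)(m-j)}\qbinom{N-m-1}{m-j}\qbinom{m+1}{j}+q^{m+1}\qbinom{s}{1}\le \qbinom{m+1}{1}\sum_{j\ge1}q^{(m-j)(m-1-j)}\qbinom{N-m-1}{m-1-j}\qbinom{m}{j}.
\]
Pair the left index $j$ with the right index $j-1$, so both sides carry $\qbinom{N-m-1}{m-j}$. The powers of $q$ agree, since $(m+1-j)(m-j)=(m-(j-1))(m-1-(j-1))$. The comparison then reduces to $\qbinom{m+1}{j}\le\qbinom{m+1}{1}\qbinom{m}{j-1}$, which holds because $\qbinom{m+1}{j}=\qbinom{m+1}{1}\qbinom{m}{j-1}/\qbinom{j}{1}$.

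\textbf{Step 4 (absorbing the extra term).} The leftover term $q^{m+1}\qbinom{s}{1}$ must be absorbed by the slack. Take the pair $j=2$ on the left and $j=1$ on the right. The right-hand side exceeds the left-hand side there by $(1-1/\qbinom{2}{1})\qbinom{m+1}{1}\qbinom{m}{1}q^{(m-1)(m-2)}\qbinom{N-m-1}{m-2}$. Since $m\ge3$, this is at least of order $q^{2m+(m-1)(m-2)}\cdot q^{(m-2)(N-2m+1)}$.

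Meanwhile $q^{m+1}\qbinom{s}{1}< q^{m+1+s}$. We need $m+1+s$ to be smaller than the exponent of the slack. Because $N-2m+1\ge s+2$ (from $n\ge 2k$) and $m-2\ge1$, the slack exponent is at least $2m+s+2$, so the comparison goes through.

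The main obstacle is this final absorption: tracking constants such as $1-1/(q+1)$ in the $q$-binomial estimates, and checking the boundary cases $m=3$ and $n=2k$. If the margin is tight there, I would instead use the full slack from all paired terms, or the $j=m$ term on the right, to absorb $q^{m+1}\qbinom{s}{1}$.
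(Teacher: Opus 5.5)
Your proof is correct. It necessarily takes a different route from the paper, because the paper gives no proof of this lemma: it quotes it as Lemma~2.7 of \cite{2022SIDMACao}.

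Your Steps 1--3 amount to a double-counting identity. Put $m=k-s$, $N=n-s$, and fix an $(m+1)$-space $W$. The first two terms count exactly the $m$-spaces $A$ with $A\cap W\neq\boldsymbol 0$. The quantity $\qbinom{m+1}{1}\qbinom{N-1}{m-1}$ is the union bound over the $1$-subspaces of $W$. So the right-hand side minus the first two terms is the overcount
\[
\sum_{j\ge 2}\Bigl(\qbinom{j}{1}-1\Bigr)q^{(m+1-j)(m-j)}\qbinom{N-m-1}{m-j}\qbinom{m+1}{j},
\]
and this is exactly the sum of your paired slacks. The lemma therefore says that this overcount beats $q^{m+1}\qbinom{s}{1}$.

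What your route buys over the citation:
\begin{itemize}
\item It is self-contained and elementary.
\item It shows the inequality is strict, with plenty of room.
\item It explains why $s\le k-3$ is needed. When $m=2$ the overcount is $q\qbinom{3}{1}$, independent of $n$, so the inequality fails once $s\ge 2$.
\end{itemize}
The citation only keeps the paper short.

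Step~4 contains two small slips. They happen to cancel.
\begin{itemize}
\item From $n\ge 2k$ you get $N\ge 2m+s$, hence $N-2m+1\ge s+1$, not $s+2$.
\item The $j=2$ slack is exactly $q^{1+(m-1)(m-2)}\qbinom{m+1}{2}\qbinom{N-m-1}{m-2}$. Its prefactor $q\qbinom{m+1}{2}=\frac{q}{q+1}\qbinom{m+1}{1}\qbinom{m}{1}$ is only guaranteed to be $\ge q^{2m-1}$; for large $q$ it is about $q^{2m-1}$, not of order $q^{2m}$.
\end{itemize}
Using $\qbinom{a}{b}\ge q^{b(a-b)}$ correctly, the slack exponent is at least
\[
1+(m-1)(m-2)+2(m-1)+(m-2)(N-2m+1)\ge 2m+s+2 \qquad (m\ge 3),
\]
so your stated exponent is right. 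Meanwhile $q^{m+1}\qbinom{s}{1}<q^{m+s+1}$. The margin is therefore a factor $q^{m+1}\ge q^4$. The boundary cases $m=3$ and $n=2k$ are not tight, and the fallback you mention is unnecessary.
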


The extremal  nontrivial $1$-intersecting families of $k$-spaces were determined in Theorem~1.4 of \cite{BBCFMPS2010}.

\begin{lemma}[cf. \cite{BBCFMPS2010}]\label{lem:nontrivial-1-intersecting}
Let $n$ and $k$ be positive integers with $k \geq 3$ and $n \geq 2k+2$. Let $\mathcal{A} \subseteq \mathcal{V}(k)$ be a nontrivial $1$-intersecting family. Then the following hold:
\[
|\mathcal A|
\le 
\qbinom{n-1}{k-1}
-
q^{k(k-1)}\qbinom{n-k-1}{k-1}
+
q^{k},
\]
with equality if and only if 
\begin{enumerate}
\item[\textup{(1)}]
$\mathcal A=\mathcal{HM}(n,k,X,Y)$ for some
$X\in \mathcal V(1)$ and $Y\in \mathcal V(k)$ with $X\not\le Y$, or
\item[\textup{(2)}]
$\mathcal A=\mathcal{HM}^*(n,3,Y)$ when $k=3$. 
\end{enumerate}
\end{lemma}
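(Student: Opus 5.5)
We would follow the classical Hilton--Milner strategy, organized by the \emph{covering number} $\tau(\mathcal A)$. This is the least dimension of a subspace of $V$ meeting every member of $\mathcal A$ nontrivially. Since $\mathcal A$ is nontrivial, $\tau(\mathcal A)\ge 2$. Since any member of $\mathcal A$ is itself a cover, $\tau(\mathcal A)\le k$. We split into the cases $\tau(\mathcal A)\ge 3$ and $\tau(\mathcal A)=2$. We may also assume $\mathcal A$ is maximal nontrivial $1$-intersecting, since adding members only increases $|\mathcal A|$.

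\textbf{Case $\tau(\mathcal A)\ge 3$.} We would use a branching count.
\begin{itemize}
\item[(a)] Fix $A_0\in\mathcal A$. Every member meets $A_0$ in some point $P$, and there are $\qbinom{k}{1}$ choices of $P$.
\item[(b)] Since $P$ is not a cover, some $A_1\in\mathcal A$ misses $P$. The members through $P$ then meet $A_1$ in some point $Q$, giving at most $\qbinom{k}{1}$ choices of the line $P+Q$.
\item[(c)] Since a line is not a cover, a third member gives at most $\qbinom{k}{1}$ choices of a plane.
\end{itemize}
This yields $|\mathcal A|\le \qbinom{k}{1}^3\qbinom{n-3}{k-3}$. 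We would then check, using Lemma~\ref{lem:subspacecounting} and elementary estimates on Gaussian binomials, that this is strictly below the Hilton--Milner value
\[
\qbinom{n-1}{k-1}-q^{k(k-1)}\qbinom{n-k-1}{k-1}+q^{k}\approx \qbinom{k}{1}\qbinom{n-2}{k-2}
\]
for $n\ge 2k+2$. The exception is $k=3$: there $\tau=3$ is impossible to beat crudely, and the family $\mathcal{HM}^*(n,3,Y)$ has $\tau(\mathcal A)=2$ but no point cover. That configuration has to be handled directly by a finer count of $3$-spaces meeting a fixed $3$-space $Y$ in a plane.

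\textbf{Case $\tau(\mathcal A)=2$.} Choose a point $X$ of maximum degree in $\mathcal A$. Write $\mathcal A_X$ for the members containing $X$, and $\mathcal A_{\bar X}$ for the rest, which is nonempty. Fix $Y\in\mathcal A_{\bar X}$. Every member of $\mathcal A_X$ meets $Y$. By Lemma~\ref{lem:subspacecounting} in the quotient $V/X$, the number of $k$-spaces through $X$ disjoint from $Y$ is $q^{k(k-1)}\qbinom{n-k-1}{k-1}$, so
\[
|\mathcal A_X|\le\qbinom{n-1}{k-1}-q^{k(k-1)}\qbinom{n-k-1}{k-1}.
\]
Next, the $k$-spaces inside $X+Y$ not containing $X$ number $\qbinom{k+1}{k}-\qbinom{k}{k-1}=q^k$. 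So if $\mathcal A_{\bar X}$ consists only of subspaces of $X+Y$, the bound follows. Equality then forces $\mathcal A=\mathcal{HM}(n,k,X,Y)$.

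The key step, and the main obstacle we expect, is the trade-off argument when some $B\in\mathcal A_{\bar X}$ is not contained in $X+Y$. Then every member of $\mathcal A_X$ must meet both $Y$ and $B$. We would count $k$-spaces through $X$ meeting $Y$ but avoiding $B$ (modulo $X$) to show that the loss in $\mathcal A_X$ is of order $q^{\Theta(k)}\qbinom{n-3}{k-3}$. We would then show this loss exceeds the total possible gain in $\mathcal A_{\bar X}$. That gain is controlled by the maximality of $\deg X$ together with $\tau=2$, which lets us use a covering line $L\ni X$ to bound $|\mathcal A_{\bar X}|\le \qbinom{2}{1}\cdot\max\deg$-type quantities. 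This comparison is tight only at the boundary $n=2k+2$ and small $k$. We would do that case with explicit Gaussian-binomial inequalities, tracking when equality could occur so as to recover exactly the two extremal families in the statement.
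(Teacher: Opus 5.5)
The paper gives no proof of this lemma; it is quoted from \cite{BBCFMPS2010}. Judged on its own, your plan has a genuine gap in the case $\tau(\mathcal A)\ge 3$. The branching bound $\qbinom{k}{1}^3\qbinom{n-3}{k-3}$ is \emph{not} below the Hilton--Milner value in most of the range $n\ge 2k+2$, so no amount of care with Gaussian-binomial estimates will close this case. Its ratio to the leading term $\qbinom{k}{1}\qbinom{n-2}{k-2}$ is $\qbinom{k}{1}^2(q^{k-2}-1)/(q^{n-2}-1)$. At $n=2k+2$ this is about $q^{k-2}/(q-1)^2$ and grows with $k$, so the comparison only succeeds once $n$ is roughly $3k$. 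Concretely, for $q=2$, $k=4$, $n=10$ you get $15^3\cdot 127=428625$, whereas the Hilton--Milner value is $788035-634880+16=153171$; for $q=2$, $k=4$ it still fails at $n=11$.

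Your remark that $k=3$ is the exception is backwards. There the bound is $\qbinom{3}{1}^3$, which is below the Hilton--Milner value $1+q\qbinom{3}{1}\qbinom{n-3}{1}$ (e.g.\ $343<435$ for $q=2$, $n=8$). Meanwhile $\mathcal{HM}^*(n,3,S)$ has $\tau=2$ and does not belong to this case at all. What is missing is a bound for $\tau\ge 3$ that saves a full factor $\qbinom{k}{1}$, i.e.\ something of order $\qbinom{k}{1}^2\qbinom{n-3}{k-3}$. Each of your three factors is essentially sharp relative to the single member used at that level. So this requires an additional idea exploiting how the members of $\mathcal A$ intersect one another, not a better choice of the branching members.

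The $\tau=2$ branch is likewise only a plan, and the tool you name for it is too weak. Bounding $|\mathcal A_{\bar X}|$ by $\qbinom{2}{1}$ times a maximum degree yields a quantity of the same order as the whole bound $\approx\qbinom{k}{1}\qbinom{n-2}{k-2}$, so the loss in $\mathcal A_X$ can never beat it. You have also not justified that a covering line passes through the maximum-degree point $X$.

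Moreover, for $k=3$ the trade-off has no slack whatsoever, for every $n$ and not only at $n=2k+2$. In $\mathcal{HM}^*(n,3,S)$ the maximum-degree points are the points $X\le S$, with $|\mathcal A_X|=1+q(q+1)\qbinom{n-3}{1}$ and $|\mathcal A_{\bar X}|=q^3\qbinom{n-3}{1}$. So the loss relative to $\qbinom{n-1}{2}-q^6\qbinom{n-4}{2}$ and the gain over $q^3$ are both exactly $q^3\qbinom{n-3}{1}-q^3$. This step therefore has to be carried out with exact counts, e.g.\ split according to $\dim\bigl((X+Y)\cap(X+B)\bigr)$. That is where both the inequality and the characterization of the two extremal families are actually decided, and the proposal does not yet contain that argument.
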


\subsection{Complementary layers}

For rough upper bounds, Theorem~\ref{thm:ekrvs} is often sufficient. 
However, when we study equality or near-equality, a more refined analysis is needed. 
The key special case is when two layers are complementary, namely when $i+j=n$. 
In that case, Lemma~\ref{lem:intersecting} yields cross-$1$-intersection, and one can appeal to a much sharper theorem of Wang and Zhang~\cite{WZ2013}.

\begin{theorem}[\cite{WZ2013}]\label{lem:crossintersecting}
Let $n,a,b,t$ be positive integers with
$n\ge 4$, $a,b\ge 2$, $t<\min\{a,b\}$, $a+b<n+t$, and
\(
\qbinom{n}{a}\le \qbinom{n}{b}.
\) Let $V$ be an $n$-dimensional vector space over $\mathbb{F}_q$.
If $\mathcal{A}\subsetneq \mathcal{V}(a)$ and $\mathcal{B}\subsetneq \mathcal{V}(b)$ are cross-$t$-intersecting, then
\begin{equation*}
|\mathcal{A}|+|\mathcal{B}|
\le
\qbinom{n}{b}
-
\sum_{i=0}^{t-1}
q^{(a-i)(b-i)}
\qbinom{a}{i}
\qbinom{n-a}{b-i}
+1.
\end{equation*}
Moreover, equality holds if and only if one of the following holds:
\begin{enumerate}
\item[\textup{(1)}]
$\mathcal{A}=\{A\}$ and
\(
\mathcal{B}
=
\{\,B \in \mathcal{V}(b): \dim(A\cap B)\ge t\,\}
\)
for some $A\in \mathcal{V}(a)$;

\item[\textup{(2)}]
$\qbinom{n}{a}=\qbinom{n}{b}$,
\(
\mathcal{A}
=
\{A \in \mathcal{V}(a): \dim(A\cap B)\ge t\}\)
and \(
\mathcal{B}=\{B\}
\)
for some $B\in \mathcal{V}(b)$.
\end{enumerate}
\end{theorem}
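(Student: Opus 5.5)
Since this result is quoted from~\cite{WZ2013}, I only sketch how I would reprove it. The plan is a graph-theoretic one. Throughout I assume $\mathcal A,\mathcal B\neq\varnothing$; if $\mathcal A=\varnothing$ then $|\mathcal B|=\qbinom{n}{b}$ is allowed and the bound fails, so this hypothesis is implicit in the statement.

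\emph{Setup.} Let $G$ be the bipartite graph with parts $\mathcal V(a)$ and $\mathcal V(b)$, where $A\sim B$ if and only if $\dim(A\cap B)<t$. Then $(\mathcal A,\mathcal B)$ is cross-$t$-intersecting exactly when $\mathcal A\cup\mathcal B$ is independent in $G$, so $\mathcal B\subseteq \mathcal V(b)\setminus N(\mathcal A)$. By Lemma~\ref{lem:subspacecounting}, every $A\in\mathcal V(a)$ has degree
\[
D_a=\sum_{i=0}^{t-1}q^{(a-i)(b-i)}\qbinom{a}{i}\qbinom{n-a}{b-i},
\]
and symmetrically every $B\in\mathcal V(b)$ has a common degree $D_b$, with $D_a\qbinom{n}{a}=D_b\qbinom{n}{b}$. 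The hypothesis $a+b<n+t$ gives $D_a,D_b>0$. Under these reductions the theorem is equivalent to the vertex-expansion statement
\[
|N(\mathcal A)|-|\mathcal A|\ \ge\ D_a-1
\]
for every nonempty $\mathcal A\subseteq\mathcal V(a)$ with $N(\mathcal A)\neq\mathcal V(b)$. The equality cases then correspond to the configurations where this expansion inequality is tight.

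\emph{Key steps.}
\begin{enumerate}
\item[(i)] \emph{Small $|\mathcal A|$.} For $|\mathcal A|=1$ equality holds, which gives configuration~(1). For $|\mathcal A|=2$, I would count directly: two distinct $a$-spaces $A_1,A_2$ have different neighbourhoods, so $|N(\mathcal A)|\ge D_a+1$ and $|N(\mathcal A)|-|\mathcal A|\ge D_a-1$. I still need to check that this is strict, i.e.\ that the symmetric difference of the two neighbourhoods exceeds $1$. The lower-bound form of Lemma~\ref{lem:subspacecounting} should show it is large.
\item[(ii)] \emph{Intermediate $|\mathcal A|$.} I would use spectral expansion. $G$ is a biregular graph arising from the $q$-analogue of the Johnson scheme, so its singular values are computable via $q$-Eberlein polynomials. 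The Tanner / expander-mixing bound
\[
|N(\mathcal A)|\ \ge\ \frac{D_a^2\,|\mathcal A|}{\lambda_2^2+(D_aD_b-\lambda_2^2)\,|\mathcal A|/\qbinom{n}{a}}
\]
then yields linear expansion, with a ratio much larger than $1$ whenever $|\mathcal A|$ is not close to $\qbinom{n}{a}$.
\item[(iii)] \emph{Large $|\mathcal A|$.} If $|\mathcal A|$ is large, then $\mathcal B$ is small, and I would swap the roles of the two sides. Rerunning steps (i)--(ii) from $\mathcal B$ gives $|\mathcal A|+|\mathcal B|\le \qbinom{n}{a}-D_b+1$. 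The condition $\qbinom{n}{a}\le\qbinom{n}{b}$ gives $D_b\ge D_a$ and $\qbinom{n}{a}\le\qbinom{n}{b}$, hence
\[
\qbinom{n}{a}-D_b\ \le\ \qbinom{n}{b}-D_a .
\]
If this is an equality, configuration~(2) appears as the second tight case.
\end{enumerate}

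\emph{Main obstacle.} The hardest step is (ii): establishing the spectral gap $\lambda_2\ll\sqrt{D_aD_b}$ uniformly in the whole range $t<\min\{a,b\}$, $a+b<n+t$, and showing that the three regimes overlap, so that the function $|N(\mathcal A)|-|\mathcal A|$ has no interior minimum. My first attempt would be to bound the Eberlein eigenvalues by the leading term of their $q$-expansion. If that fails near the boundary $a+b=n+t-1$, I would fall back on a direct Hilton--Milner-style counting argument. In that argument, one fixes a member $A_0\in\mathcal A$ and counts the $b$-spaces meeting $A_0$ in dimension $\ge t$ but failing to $t$-intersect some other member of $\mathcal A$.
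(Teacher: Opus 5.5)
\textbf{There is a genuine gap: your step (ii) cannot cover the range it needs to, and nothing else in the proposal covers it.}

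For context, the paper does not prove this statement; it imports it from \cite{WZ2013}. There it is obtained by an isoperimetric (fragment/atom) argument. Among nonempty $\mathcal A\subseteq\mathcal V(a)$ with $N(\mathcal A)\neq\mathcal V(b)$, the minimal sets minimizing $|N(\mathcal A)|-|\mathcal A|$ are permuted by the automorphism group and form blocks of imprimitivity. Primitivity of the semilinear group on $\mathcal V(a)$ then forces them to be singletons. This handles every size of $\mathcal A$ at once, with no estimates. Your reformulation as $|N(\mathcal A)|-|\mathcal A|\ge D_a-1$ is the right one, and you are right that nonemptiness of both families is implicit.

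\emph{Why step (ii) fails.} The graph $G$ is extremely dense: $N(A)$ misses only the $\qbinom{n}{b}-D_a$ members of $\mathcal V(b)$ that $t$-intersect $A$. You need additive precision $O(1)$ inside that small slack. The expander mixing lemma, equivalently your Tanner bound, with $e(\mathcal A,\mathcal B)=0$ yields only a product bound
\[
|\mathcal A||\mathcal B|\le\lambda_2^2\bigl(\qbinom{n}{b}/D_a\bigr)^2 .
\]
This is weaker than the trivial bound $|\mathcal B|\le\qbinom{n}{b}-D_a$ whenever $|\mathcal A|\lesssim\lambda_2^2/(\qbinom{n}{b}-D_a)$, and the trivial bound is itself too weak by $|\mathcal A|-1$.

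Concretely, take $a=b=2$, $t=1$. Then $G$ is the bipartite double of the $q$-Kneser graph, so $\lambda_2=q^2\qbinom{n-3}{1}$ exactly, while $\qbinom{n}{2}-D_a=q(q+1)\qbinom{n-2}{1}+1$. Hence every $\mathcal A$ with $3\le|\mathcal A|$ up to order $q^{n-3}$ is covered neither by (i) nor by (ii). Since $\lambda_2$ is exact here, no sharper estimate of Eberlein polynomials can close this.

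So the three regimes do not overlap. The unspecified Hilton--Milner-style fallback is where the entire proof would have to happen. The equality characterization, which needs strict inequality whenever $|\mathcal A|,|\mathcal B|\ge2$, is likewise unproved.

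\emph{Step (iii).} It does not help, because it needs the same unproved expansion on the $\mathcal V(b)$ side. It also contains a sign error. From $D_a\qbinom{n}{a}=D_b\qbinom{n}{b}$ and $\qbinom{n}{a}\le\qbinom{n}{b}$ you get $D_a\ge D_b$, not $D_b\ge D_a$. The inequality $\qbinom{n}{a}-D_b\le\qbinom{n}{b}-D_a$ is still true, but for a different reason: $D_a-D_b=D_b\bigl(\qbinom{n}{b}-\qbinom{n}{a}\bigr)/\qbinom{n}{a}$ and $D_b<\qbinom{n}{a}$. Equality holds exactly when $\qbinom{n}{a}=\qbinom{n}{b}$.
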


Applying Theorem~\ref{lem:crossintersecting} with $a=k$, $b=n-k$, and $t=1$, we obtain a sharp estimate for the sum of two complementary layers.

\begin{lemma}\label{lem:complementsum-2}
Let $n\ge d+1$ and $1\le k<n-k$. Let $\mathcal{F} \subseteq \mathcal{V}$ with $\diam(\mathcal{F}) \le d$.
If $\mathcal{F}(k)\neq \varnothing$ and $\mathcal{F}(n-k)\neq \varnothing$, then
\[
|\mathcal{F}(k)|+|\mathcal{F}(n-k)|
\le
\qbinom{n}{k}-q^{k(n-k)}+1.
\]
Moreover, equality holds if and only if one of the following holds:
\begin{enumerate}
\item[\textup{(1)}]
$\mathcal{F}(k)=\{X\}$ and
\(
\mathcal{F}(n-k)
=
\{\,Y\in \mathcal{V}(n-k): \dim(X\cap Y)\ge 1\,\}
\)
for some $X\in \mathcal{V}(k)$;

\item[\textup{(2)}]
\(
\mathcal{F}(k)
=
\{\,X\in \mathcal{V}(k): \dim(X\cap Y)\ge 1\,\}
\)
and
\(
\mathcal{F}(n-k)=\{Y\}
\)
for some $Y\in \mathcal{V}(n-k)$.
\end{enumerate}
\end{lemma}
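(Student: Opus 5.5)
The plan is to apply Theorem~\ref{lem:crossintersecting} directly, with $a=k$, $b=n-k$ and intersection parameter $1$. The first step is to obtain cross-$1$-intersection from Lemma~\ref{lem:intersecting}. For $i=k$ and $j=n-k$, both of which lie in $\supp(\mathcal F)$ by assumption, the lemma says that $(\mathcal F(k),\mathcal F(n-k))$ is cross-$\lceil (n-d)/2\rceil$-intersecting. Since $n\ge d+1$ gives $n-d\ge 1$, this is at least cross-$1$-intersecting.

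The second step is to verify the hypotheses of Theorem~\ref{lem:crossintersecting}. We have $a+b=n<n+1$, and $\qbinom{n}{k}=\qbinom{n}{n-k}$, so the ordering condition holds with equality. We need $1<\min\{k,n-k\}$, that is, $k\ge 2$; this also gives $a,b\ge 2$ and $n\ge 4$, since $n>2k$.

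We also need both families to be proper subfamilies of their layers. Suppose instead that $\mathcal F(k)=\mathcal V(k)$. Choose $Y\in\mathcal F(n-k)$; some $k$-space $X$ is complementary to $Y$, so $X\cap Y=\boldsymbol 0$, contradicting cross-$1$-intersection. The same argument with the roles swapped shows $\mathcal F(n-k)\neq\mathcal V(n-k)$.

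The third step is to read off the bound. With $t=1$, the sum in Theorem~\ref{lem:crossintersecting} has the single term $i=0$, namely $q^{k(n-k)}\qbinom{k}{0}\qbinom{n-k}{n-k}=q^{k(n-k)}$. This gives exactly $\qbinom{n}{k}-q^{k(n-k)}+1$. The two equality cases of the theorem are precisely (1) and (2) of the lemma, where (2) is available because $\qbinom{n}{a}=\qbinom{n}{b}$.

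The main obstacle is the edge case $k=1$, where $t<\min\{a,b\}$ fails. I would handle it by hand. Suppose $\mathcal F(1)$ contains two distinct lines $X_1,X_2$. Then every $B\in\mathcal F(n-1)$ contains $X_1+X_2$, so
\[
|\mathcal F(1)|+|\mathcal F(n-1)|\le |\mathcal F(1)|+\qbinom{n-2}{n-3}.
\]
Every line of $\mathcal F(1)$ lies in every member of $\mathcal F(n-1)$. Fixing one $B\in\mathcal F(n-1)$, we get $|\mathcal F(1)|\le\qbinom{n-1}{1}$.

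This is not yet enough on its own, so I would use a double-counting argument instead. Let $x=|\mathcal F(1)|$ and let $W$ be the span of $\mathcal F(1)$, with $w=\dim W$. Then $x\le\qbinom{w}{1}$ and $|\mathcal F(n-1)|\le\qbinom{n-w}{1}$. Maximizing $\qbinom{w}{1}+\qbinom{n-w}{1}$ over $1\le w\le n-1$ shows the maximum occurs at $w=1$ or $w=n-1$. Its value is $1+\qbinom{n-1}{1}=\qbinom{n}{1}-q^{n-1}+1$, which matches the claimed bound.

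For equality, we need $w\in\{1,n-1\}$ with full sets, which yields exactly the configurations (1) and (2). This convexity check in $w$ is the only genuinely new computation.
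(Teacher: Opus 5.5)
Your proposal is correct. For $2\le k<n-k$ it matches the paper's argument: cross-$1$-intersection from Lemma~\ref{lem:intersecting}, properness of both layers via a complementary subspace, and then Theorem~\ref{lem:crossintersecting} with $t=1$, whose two equality cases are exactly (1) and (2).

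The only difference is in the edge case $k=1$. You take $w$ to be the dimension of the span of $\mathcal F(1)$ and use strict convexity of $\qbinom{w}{1}+\qbinom{n-w}{1}$ on $1\le w\le n-1$. This gives the bound and both equality configurations at once.

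The paper instead splits into two cases. If one layer is a singleton, it counts directly. If both layers have at least two members, then $\mathcal F(1)$ lies in the intersection of two hyperplanes and every member of $\mathcal F(n-1)$ contains a $2$-space, so the sum is at most $2\qbinom{n-2}{1}<\qbinom{n-1}{1}$.

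Your abandoned first attempt would have closed in exactly this way had you also used two distinct members of $\mathcal F(n-1)$. Both routes are elementary.
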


\begin{proof}[Proof of Lemma~\ref{lem:complementsum-2}]
Since $n\ge d+1$, we have
\(
\left\lceil \frac{n-d}{2}\right\rceil \ge 1.
\)
By Lemma~\ref{lem:intersecting}, the pair $(\mathcal{F}(k),\mathcal{F}(n-k))$ is cross-$1$-intersecting in $V$. If \(2\le k<n-k\), then neither layer can be the whole Grassmann layer: if, for instance, \(\mathcal F(k)=\mathcal V(k)\) and \(Y\in\mathcal F(n-k)\), then one can choose \(X\in\mathcal V(k)\) with \(X\cap Y= \boldsymbol 0 \), contradicting cross-$1$-intersection; the other case is similar. Hence both layers are proper subfamilies, and the conclusion follows from Theorem~\ref{lem:crossintersecting}.

It remains to consider the case $k=1<n-k$. 
If $|\mathcal{F}(1)|=1$ or $|\mathcal{F}(n-1)|=1$, then the conclusion follows immediately from Lemma~\ref{lem:subspacecounting}.
So assume that $|\mathcal{F}(1)|\ge 2$ and $|\mathcal{F}(n-1)|\ge 2$. 
Since $(\mathcal{F}(1),\mathcal{F}(n-1))$ is cross-$1$-intersecting, we have $X\le Y$ for every $X\in \mathcal{F}(1)$ and $Y\in \mathcal{F}(n-1)$. Hence
\[
|\mathcal{F}(1)|+|\mathcal{F}(n-1)|
\le
\qbinom{n-2}{1}+\qbinom{n-2}{n-3}
=
2\qbinom{n-2}{1}
<
\qbinom{n-1}{1}
=
\qbinom{n}{1}-q^{n-1}.
\]
\qedhere
\end{proof}

The previous lemma has the following immediate consequence, which shows that the trivial upper bound
\(
|\mathcal{F}(k)|+|\mathcal{F}(n-k)|\le \qbinom{n}{k}
\)
can only be attained in a completely degenerate way.

\begin{cor}\label{lem:complementsum-1}
Let $n\ge d+1$. Let $\mathcal{F} \subseteq \mathcal{V}$ with $\diam(\mathcal{F}) \le d$. For every $0\le k<n-k$, we have
\[
|\mathcal{F}(k)|+|\mathcal{F}(n-k)|
\le
\qbinom{n}{k}.
\]
Moreover, equality holds if and only if either
\(
|\mathcal{F}(k)|=\qbinom{n}{k}\)
and
\(
\mathcal{F}(n-k)=\varnothing,
\)
or
\(
\mathcal{F}(k)=\varnothing\)
and \(
|\mathcal{F}(n-k)|=\qbinom{n}{k}.
\)
\end{cor}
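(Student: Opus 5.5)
The proof is a case split on which of the two layers are nonempty, using Lemma~\ref{lem:complementsum-2} for the only nontrivial case. Throughout, we use that $\mathcal F(n-k)\subseteq\mathcal V(n-k)$ and $|\mathcal V(n-k)|=\qbinom{n}{n-k}=\qbinom{n}{k}$.

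\textbf{Case $\mathcal F(k)=\varnothing$ or $\mathcal F(n-k)=\varnothing$.} If one layer is empty, the sum equals the size of the other layer. That layer sits inside a Grassmannian of size $\qbinom{n}{k}$, so the bound holds. Equality forces the nonempty layer to be the full Grassmannian, which is exactly one of the two listed equality configurations.

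\textbf{Case $k=0$.} Here both layers being nonempty means $\boldsymbol 0\in\mathcal F$ and $V\in\mathcal F$. Then $\Delta(\boldsymbol 0,V)=n\ge d+1>d$, contradicting $\diam(\mathcal F)\le d$. So when $k=0$ at least one layer is empty, and we are in the previous case.

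\textbf{Case $k\ge1$ with both layers nonempty.} The hypotheses of Lemma~\ref{lem:complementsum-2} hold, giving
\[
|\mathcal F(k)|+|\mathcal F(n-k)|\le \qbinom{n}{k}-q^{k(n-k)}+1 .
\]
Since $k(n-k)\ge1$, we have $q^{k(n-k)}\ge q\ge2$, so this bound is strictly less than $\qbinom{n}{k}$. Hence equality cannot occur when both layers are nonempty, and the equality characterization follows.

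The only real content is the strict inequality in the last case, which comes directly from Lemma~\ref{lem:complementsum-2}. Nothing else is an obstacle.
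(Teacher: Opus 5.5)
Your proof is correct and follows essentially the same route as the paper. The case $k=0$ is ruled out via $\Delta(\boldsymbol 0,V)=n>d$, and for $1\le k<n-k$ with both layers nonempty, Lemma~\ref{lem:complementsum-2} gives the strict bound $\qbinom{n}{k}-q^{k(n-k)}+1<\qbinom{n}{k}$, so equality forces one layer to be empty and the other full. Handling the one-empty case first and then reducing $k=0$ to it is only a cosmetic reordering of the paper's argument.
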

\begin{proof}[Proof of Corollary~\ref{lem:complementsum-1}]
For \(k=0\), the two layers are \(\mathcal V(0)=\{\boldsymbol 0\}\) and \(\mathcal V(n)=\{V\}\). They cannot both meet \(\mathcal F\), since \(\Delta(\boldsymbol 0,V)=n>d\). Hence the stated bound and equality condition are immediate. Now let \(1\le k<n-k\). If both \(\mathcal F(k)\) and \(\mathcal F(n-k)\) are nonempty, Lemma~\ref{lem:complementsum-2} gives
\[
|\mathcal F(k)|+|\mathcal F(n-k)|
\le \qbinom nk-q^{k(n-k)}+1
<\qbinom nk.
\]
Therefore equality in the trivial bound can occur only when one of the two layers is empty, and then the other must be the whole corresponding Grassmann layer.
\end{proof}

Iterating Corollary~\ref{lem:complementsum-1} over all complementary pairs already forces a strong global structure. 
The next lemma records the resulting rigidity in the even-diameter case.

\begin{lemma}\label{lem:equality}
Let $\mathcal{F} \subseteq \mathcal{V}$ with $\diam(\mathcal{F}) \le d$, and put \(t:=\lfloor d/2\rfloor\).
If $n\ge d+1$ and
\(
|\mathcal{F}(k)|+|\mathcal{F}(n-k)|=\qbinom{n}{k}\)
for \(k=0,1,\ldots,t,\)
then either $\mathcal{F}$ or $\mathcal{F}^\perp$ satisfies one of the following:
\begin{enumerate}
\item[\textup{(1)}]
$n=d+1$, and for each $k=0,1,\ldots,t$, either
\(
|\mathcal{F}(k)|=\qbinom{n}{k}\)
and \(
|\mathcal{F}(n-k)|=0,
\)
or vice versa;

\item[\textup{(2)}]
$n\ge d+2$, and
\(
|\mathcal{F}(k)|=\qbinom{n}{k}\)
and
\(
|\mathcal{F}(n-k)|=0
\)
for \(k=0,1,\ldots,t.\)
\end{enumerate}
\end{lemma}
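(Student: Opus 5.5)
The plan is to combine Corollary~\ref{lem:complementsum-1} with a single distance computation between two adjacent layers. The corollary handles each complementary pair separately; the distance computation then forces all the full layers to lie on the same side.

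\textbf{Step 1 (each pair is degenerate).} By hypothesis, $|\mathcal F(k)|+|\mathcal F(n-k)|=\qbinom nk$ for every $0\le k\le t$. Since $t=\lfloor d/2\rfloor$ and $n\ge d+1$, we have $k\le t<n/2$, so $k<n-k$ and the equality case of Corollary~\ref{lem:complementsum-1} applies. It shows that for each such $k$ exactly one of the following holds: $\mathcal F(k)=\mathcal V(k)$ and $\mathcal F(n-k)=\varnothing$, or $\mathcal F(k)=\varnothing$ and $\mathcal F(n-k)=\mathcal V(n-k)$. When $n=d+1$, this is exactly alternative~(1), and we are done.

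\textbf{Step 2 (normalization).} Assume $n\ge d+2$. The map $U\mapsto U^\perp$ is a bijection $\mathcal V(k)\to\mathcal V(n-k)$. It preserves $\Delta$, since $\dim(U^\perp+W^\perp)=n-\dim(U\cap W)$ and $\dim(U^\perp\cap W^\perp)=n-\dim(U+W)$. Hence $\mathcal F^\perp$ also has diameter at most $d$ and satisfies the same hypothesis, with the roles of layers $k$ and $n-k$ swapped. By Step~1 with $k=0$, exactly one of $\boldsymbol 0$ and $V$ lies in $\mathcal F$. After possibly replacing $\mathcal F$ by $\mathcal F^\perp$, we may assume $\boldsymbol 0\in\mathcal F$.

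Let $S:=\{k\in\{0,\dots,t\}:\mathcal F(k)=\mathcal V(k)\}$. Then $0\in S$, and the goal is to show $S=\{0,\dots,t\}$.

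\textbf{Step 3 (no switch between consecutive indices).} Suppose $S\ne\{0,\dots,t\}$. Since $0\in S$, there is a smallest index $j\in[1,t]$ with $j\notin S$. Then $j-1\in S$, so $\mathcal F(j-1)=\mathcal V(j-1)$. Also $j\notin S$, so by Step~1 $\mathcal F(n-j)=\mathcal V(n-j)$.

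Because $(j-1)+(n-j)=n-1\le n$, we can pick $A\in\mathcal V(j-1)$ and $B\in\mathcal V(n-j)$ with $A\cap B=\boldsymbol 0$. Both lie in $\mathcal F$, since the two layers are full. Then
\[
\Delta(A,B)=(j-1)+(n-j)-0=n-1\ge d+1>d,
\]
which contradicts $\diam(\mathcal F)\le d$.

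Therefore $S=\{0,\dots,t\}$. That is, $|\mathcal F(k)|=\qbinom nk$ and $\mathcal F(n-k)=\varnothing$ for all $k\le t$, which is alternative~(2).

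The only point that needs care is Step~3, namely choosing the right pair of layers. Pairing layer $0$ with a high layer $n-j$ only gives $\Delta=n-j$, which need not exceed $d$. Pairing consecutive indices $j-1$ and $n-j$ gives $\Delta=n-1$, and this exceeds $d$ precisely because $n\ge d+2$.
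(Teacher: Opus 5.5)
Your proof is correct and is essentially the paper's argument: both apply the equality case of Corollary~\ref{lem:complementsum-1} to each complementary pair, normalize so that $\boldsymbol 0\in\mathcal F$, and then rule out a switch by pairing the full layer $\mathcal F(j-1)$ with the nonempty layer $\mathcal F(n-j)$ via trivially intersecting members at distance $n-1\ge d+1$. The paper phrases this last step as an induction rather than a minimal-counterexample argument, and it adds a distance observation in case~(1) that, as you implicitly note, is not needed for the forward implication as stated.
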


\begin{proof}[Proof of Lemma~\ref{lem:equality}]
Clearly, exactly one of $\mathcal{F}(0)$ and $\mathcal{F}(n)$ is nonempty. 
If $\mathcal{F}(n)\neq \varnothing$, then we pass to $\mathcal{F}^\perp$. 
Thus we may assume that $\mathcal{F}(0)\neq \varnothing$.

By Corollary~\ref{lem:complementsum-1}, for each $0\le k\le t$, either
\(
|\mathcal{F}(k)|=\qbinom{n}{k}\)
and \(
\mathcal{F}(n-k)=\varnothing,
\)
or
\(
\mathcal{F}(k)=\varnothing\) and
\(
|\mathcal{F}(n-k)|=\qbinom{n}{k}.
\)

For part~(1), it suffices to observe that for every $X\in \mathcal{V}(k)$ and every $Y\in \mathcal{V}\setminus \mathcal{V}(n-k)$, we have $\Delta(X,Y)\le d$. 
Indeed, if $\dim Y\le n-k-1$, then
\[
\Delta(X,Y)\le \dim X+\dim Y\le k+(n-k-1)=d.
\]
If $\dim Y\ge n-k+1$, then
\[
\dim(X\cap Y)
=
\dim X+\dim Y-\dim(X+Y)
\ge
k+(n-k+1)-n
\ge 1,
\]
and hence
\[
\Delta(X,Y)
\le
\dim(X+Y)-\dim(X\cap Y)
\le n-1=d.
\]

For part~(2), we argue by induction on $t$. 
The case $t=0$ is immediate. 
Assume the statement holds for $t-1$. Then
\(
|\mathcal{F}(t-1)|=\qbinom{n}{t-1}\)
and
\(
|\mathcal{F}(n-t+1)|=0.
\)
If $\mathcal{F}(n-t)\neq \varnothing$, choose $Y\in \mathcal{F}(n-t)$. Since
\(
|\mathcal{F}(t-1)|=\qbinom{n}{t-1}=|\mathcal{V}(t-1)|,
\)
we may choose $X\in \mathcal{F}(t-1)$ with $X\cap Y= \boldsymbol 0$. Then
\[
\Delta(X,Y)=\dim X+\dim Y=n-1\ge d+1,
\]
since $n\ge d+2$, a contradiction. Therefore $\mathcal{F}(n-t)=\varnothing$, and so
\(
|\mathcal{F}(t)|=\qbinom{n}{t}.
\)
\qedhere
\end{proof}

In the odd-diameter case there is one additional middle layer, and the previous rigidity statement must be supplemented by a structural description of that layer.

\begin{lemma}\label{lem:equality-odd}
Let \(\mathcal F\subseteq\mathcal V\) satisfy \(\diam(\mathcal F)\le d\).
If $n\ge d+1$, $d=2t+1$, and
\(
|\mathcal{F}(k)|+|\mathcal{F}(n-k)|=\qbinom{n}{k}\)
for \(k=0,1,\ldots,t,\) and furthermore one of the following holds:
\begin{enumerate}
\item[(a)]
\(
|\mathcal{F}(t+1)|=\qbinom{n-1}{t},
\)
$\mathcal{F}(t+1)$ is $1$-intersecting in $V$, and
$\mathcal{F}(n-t-1)=\varnothing$ if $n\neq d+1$;

\item[(b)]
\(
|\mathcal{F}(n-t-1)|=\qbinom{n-1}{t},
\)
$\mathcal{F}(n-t-1)$ is $(n-2t-1)$-intersecting in $V$, and
$\mathcal{F}(t+1)=\varnothing$ if $n\neq d+1$,
\end{enumerate}
then either $\mathcal{F}$ or $\mathcal{F}^\perp$ satisfies one of the following:
\begin{enumerate}
\item[\textup{(1)}]
$n=d+1$, and for each $k=0,1,\ldots,t$, either
\(
|\mathcal{F}(k)|=\qbinom{n}{k}\)
and
\(
|\mathcal{F}(n-k)|=0,
\)
or vice versa. Furthermore,
\(
|\mathcal{F}(t+1)|=\qbinom{n-1}{t},
\)
and
\(
\mathcal{F}(t+1)=\mathcal{F}(n-t-1)
\)
is $1$-intersecting in $V$;

\item[\textup{(2)}]
$n\ge d+2$, and
\(
|\mathcal{F}(k)|=\qbinom{n}{k},\)
\(
|\mathcal{F}(n-k)|=0\)
for \(k=0,1,\ldots,t.\)
Furthermore,
\(
|\mathcal{F}(t+1)|=\qbinom{n-1}{t},
\)
and
\(
\mathcal{F}(t+1)=\{\,A\in \mathcal{V}(t+1): X\le A\,\}
\)
for some fixed $1$-dimensional subspace $X \in \mathcal{V}(1)$.
\end{enumerate}
\end{lemma}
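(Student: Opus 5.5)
The plan is to reduce to Lemma~\ref{lem:equality} for the lower layers and then identify the middle layer using the orthogonal-complement isometry and Theorem~\ref{thm:EKR-vector-space-1}. Since $t=\lfloor d/2\rfloor$ and the complementary-sum hypothesis holds for $k=0,\dots,t$, Lemma~\ref{lem:equality} applies to $\mathcal F$. It tells us that either $\mathcal F$ or $\mathcal F^\perp$ has the described structure on the layers $k$ and $n-k$ for $0\le k\le t$. We first record how $\perp$ acts on the middle-layer hypotheses. From $\dim(A^\perp\cap B^\perp)=n-\dim(A+B)$ we get
\[
\dim(A^\perp\cap B^\perp)=n-\dim A-\dim B+\dim(A\cap B).
\]
So for $A,B\in\mathcal V(n-t-1)$ with $\dim(A\cap B)\ge n-2t-1$ we have $\dim(A^\perp\cap B^\perp)\ge 1$. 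Hence hypothesis (b) for $\mathcal F$ is exactly hypothesis (a) for $\mathcal F^\perp$, and vice versa. Since $\perp$ is a $\Delta$-isometry, the hypotheses of the lemma are therefore invariant under passing to $\mathcal F^\perp$, with (a) and (b) swapped. So we may replace $\mathcal F$ by whichever of $\mathcal F,\mathcal F^\perp$ Lemma~\ref{lem:equality} singles out, and still assume that one of (a) or (b) holds.

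For $n=d+1=2t+2$ we have $n-t-1=t+1$, so the two middle layers coincide and $n-2t-1=1$. Hypotheses (a) and (b) then say the same thing: $\mathcal F(t+1)$ has size $\qbinom{n-1}{t}$ and is $1$-intersecting. Here (a) and (b) impose no emptiness condition. The displayed formula also shows that $\perp$ maps $1$-intersecting families in $\mathcal V(t+1)$ to $1$-intersecting families when $n=2t+2$. Together with part~(1) of Lemma~\ref{lem:equality}, this gives conclusion~(1) directly.

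For $n\ge d+2$, part~(2) of Lemma~\ref{lem:equality} lets us assume, after possibly passing to $\mathcal F^\perp$, that $\mathcal F(k)=\mathcal V(k)$ and $\mathcal F(n-k)=\varnothing$ for $0\le k\le t$. The key step is to rule out (b) in this normalization. Suppose $Y\in\mathcal F(n-t-1)$. Since $\mathcal F(t)=\mathcal V(t)$ and $t+(n-t-1)<n$, we can choose $X\in\mathcal F(t)$ with $X\cap Y=\boldsymbol 0$. Then $\Delta(X,Y)=n-1\ge d+1$, a contradiction. So $\mathcal F(n-t-1)=\varnothing$, which makes (b) impossible because it requires $|\mathcal F(n-t-1)|=\qbinom{n-1}{t}>0$. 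Hence (a) holds: $\mathcal F(t+1)$ is $1$-intersecting of size $\qbinom{n-1}{t}$. Because $n\ge 2t+3=2(t+1)+1$, Theorem~\ref{thm:EKR-vector-space-1} applies with $k=t+1$ and shows $\mathcal F(t+1)$ is the full star of some $X\in\mathcal V(1)$. In the degenerate case $t=0$ this reads $\mathcal F(1)=\{X\}$, which the theorem also covers. This is conclusion~(2).

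The main obstacle is the bookkeeping of which of (a)/(b) survives after the normalization by $\perp$ chosen by Lemma~\ref{lem:equality}. The intersection-dimension identity above, combined with the disjointness argument ruling out (b), handles it.
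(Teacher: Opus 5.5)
Your proof is correct and follows essentially the same route as the paper: normalize via the orthogonal-complement map, read off the case $n=d+1$ from Lemma~\ref{lem:equality}, and for $n\ge d+2$ rule out $\mathcal F(n-t-1)\neq\varnothing$ using a $t$-space trivially intersecting it, then invoke Theorem~\ref{thm:EKR-vector-space-1}. The paper normalizes by $\mathcal F(0)\neq\varnothing$, whereas you normalize by whatever Lemma~\ref{lem:equality} selects. Your explicit checks that (a) and (b) are swapped by $\perp$ (via $\dim(A^\perp\cap B^\perp)=n-\dim A-\dim B+\dim(A\cap B)$) and that (b) cannot survive the normalization just make explicit steps the paper leaves implicit.
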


\begin{proof}[Proof of Lemma~\ref{lem:equality-odd}]
The two assumptions~(a) and~(b) are dual to each other under the orthogonal-complement map. 
Thus, after possibly replacing $\mathcal{F}$ by $\mathcal{F}^\perp$, we may assume that $\mathcal{F}(0)\neq \varnothing$. 

If $n=d+1=2t+2$, then $\mathcal{F}(t+1)=\mathcal{F}(n-t-1)$ is $1$-intersecting in $V$. 
The desired conclusion then follows from Lemma~\ref{lem:equality}.

Assume now that $n\ge d+2$. Then Lemma~\ref{lem:equality} gives
\(
|\mathcal{F}(k)|=\qbinom{n}{k}\)
and
\(
|\mathcal{F}(n-k)|=0
\)
for \(k=0,1,\ldots,t.\)
Assume that $\mathcal{F}(n-t-1) \neq \varnothing$, and let $Y \in \mathcal{F}(n-t-1)$. Since
\(
|\mathcal{F}(t)| = \qbinom{n}{t} = |\mathcal V(t)|,
\)
we may choose $X \in \mathcal{F}(t)$ such that $X \cap Y = \boldsymbol{0}$. As $n \ge d+2$, it follows that
\[
\Delta(X,Y) = \dim X + \dim Y = n-1 \ge d+1,
\]
a contradiction. Thus, $\mathcal{F}(n-t-1) = \varnothing$, and $|\mathcal{F}(t+1)| = \qbinom{n-1}{t}$; moreover, $\mathcal{F}(t+1)$ is $1$-intersecting in $V$.
Since
\(
n\ge d+2=2t+3,
\) 
Theorem \ref{thm:EKR-vector-space-1} implies that
\[
\mathcal{F}(t+1)=\{\,A\in \mathcal{V}(t+1): X \le A\,\}
\]
for some fixed $1$-dimensional subspace $X \in \mathcal{V}(1)$.
\qedhere
\end{proof}

\subsection{Orthogonal complements and normalization of the support}

The final tool in this section is the orthogonal-complement map. 
Its role is not to provide a direct counting bound, but rather to allow us to move freely between lower layers and upper layers while preserving all distance information. 
This gives a convenient normalization of the support of $\mathcal{F}$.

Recall that if $(X,\delta)$ and $(Y,\rho)$ are metric spaces, then a bijection $f:X\to Y$ is called an \emph{isometry} if
\(
\rho(f(x_1),f(x_2))=\delta(x_1,x_2)\)
for all \(x_1,x_2\in X.\)

\begin{lemma}[\cite{LLY2026}]\label{lem:orthogonal}
The bijection $f(U):=U^\perp$ is an isometry on $(\mathcal{V},\Delta)$.
\end{lemma}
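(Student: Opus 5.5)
We must show that $f(U)=U^\perp$ is a bijection of $\mathcal V$ and that $\Delta(U^\perp,W^\perp)=\Delta(U,W)$ for all $U,W\in\mathcal V$. The plan is to rely on three standard facts about orthogonal complements with respect to a nondegenerate bilinear form $\langle\cdot,\cdot\rangle$ on $V=\mathbb F_q^n$, which need not be symmetric:
\begin{enumerate}
\item[(i)] $\dim U^\perp=n-\dim U$;
\item[(ii)] $(U+W)^\perp=U^\perp\cap W^\perp$;
\item[(iii)] $(U\cap W)^\perp=U^\perp+W^\perp$.
\end{enumerate}
For a possibly nonsymmetric form, fix $U^\perp:=\{v:\langle u,v\rangle=0 \ \forall u\in U\}$ consistently. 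Fact (i) holds because $U^\perp$ is the kernel of the map $V\to U^*$, $v\mapsto \langle\cdot,v\rangle|_U$. This map is surjective, since nondegeneracy makes $V\to V^*$ an isomorphism and restriction $V^*\to U^*$ is onto.

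For bijectivity, note $U\le (U^\perp)^{\perp'}$, where $\perp'$ denotes the complement taken on the other side. By (i) applied on both sides, the dimensions agree, so $U=(U^\perp)^{\perp'}$. Hence $f$ has an inverse and is a bijection; if the form is symmetric or alternating, $f$ is simply an involution. Fact (ii) is immediate from the definition: $v$ is orthogonal to $U+W$ exactly when it is orthogonal to both $U$ and $W$. Fact (iii) follows from (ii) applied to $U^\perp,W^\perp$ with the other-side complement, combined with the double-complement identity: $(U^\perp+W^\perp)^{\perp'}=U\cap W$, and then taking $\perp$ of both sides.

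With these facts the distance computation is direct. Set $u=\dim U$, $w=\dim W$ and $c=\dim(U\cap W)$. By (iii) and (i), $\dim(U^\perp+W^\perp)=n-c$. By (ii) and (i), $\dim(U^\perp\cap W^\perp)=n-\dim(U+W)=n-(u+w-c)$. Therefore
\[
\Delta(U^\perp,W^\perp)=(n-c)-(n-u-w+c)=u+w-2c=\Delta(U,W).
\]

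The only step requiring any care is the double-complement identity in the nonsymmetric case; everything else is linear-algebra bookkeeping. I would address it with the dimension count from (i), together with the trivial inclusion $U\le (U^\perp)^{\perp'}$.
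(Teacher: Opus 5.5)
Your proof is correct and complete, including the care you take with a possibly nonsymmetric form. The paper gives no proof of this lemma and simply quotes it from \cite{LLY2026}, so there is no in-paper route to compare against. Your argument, based on $\dim U^\perp=n-\dim U$ and $(U+W)^\perp=U^\perp\cap W^\perp$, is the standard linear-algebra proof.

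You can shorten it, because fact (iii) and the double-complement identity are not needed. From (i), (ii) and the dimension formula alone,
\[
\Delta(U^\perp,W^\perp)=\dim U^\perp+\dim W^\perp-2\dim\bigl((U+W)^\perp\bigr)=(n-u)+(n-w)-2\bigl(n-(u+w-c)\bigr)=u+w-2c .
\]
Bijectivity then follows from this identity. If $U^\perp=W^\perp$, then $\Delta(U,W)=\Delta(U^\perp,W^\perp)=0$, so $U=W$. An injective self-map of the finite set $\mathcal V$ is a bijection. Your double-complement argument is still worth keeping if you want the explicit inverse $S\mapsto S^{\perp'}$, or the involution property in the symmetric or alternating case.
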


Given $\mathcal{F}\subseteq \mathcal{V}$, recall that
\(
\diam(\mathcal{F})
:=
\max_{A,B\in \mathcal{F}}\Delta(A,B),
\)
and define
\[
D(\mathcal{F})
:=
\max_{A,B\in \mathcal{F}} |\dim A-\dim B|.
\]
Since
\(
|\dim A-\dim B|\le \Delta(A,B)
\)
for all \(A,B\in \mathcal{F},\)
we always have
\(
D(\mathcal{F})\le \diam(\mathcal{F}).
\)
Define
\[
m_{\mathcal{F}}
:=
\min\{
\supp(\mathcal{F})
\cup 
\supp(\mathcal{F}^\perp)\}.
\]

The next elementary lemma shows that, after possibly passing to $\mathcal{F}^\perp$, the support interval of $\mathcal{F}$ may always be assumed to start from a value at most $\frac{n}{2}$.

\begin{lemma}\label{lem:min-bound}
Denote $D(\mathcal{F})=D$, then
\(
m_{\mathcal{F}}\le \frac{n-D}{2}.
\)
\end{lemma}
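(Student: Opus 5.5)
The plan is to compare the supports of $\mathcal F$ and $\mathcal F^\perp$ directly. Assume $\mathcal F\neq\varnothing$, and let $a:=\min\supp(\mathcal F)$ and $b:=\max\supp(\mathcal F)$, so that $D=b-a$ by the definition of $D(\mathcal F)$.

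Since $\dim U^\perp=n-\dim U$ for every subspace $U$ (the bilinear form is nondegenerate), we have
\[
\supp(\mathcal F^\perp)=\{\,n-i: i\in\supp(\mathcal F)\,\},
\qquad\text{so}\qquad
\min\supp(\mathcal F^\perp)=n-b.
\]
Therefore $m_{\mathcal F}=\min\{a,\,n-b\}$.

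Now average the two candidates:
\[
a+(n-b)=n-(b-a)=n-D,
\]
so
\[
\min\{a,n-b\}\le \tfrac12\bigl(a+(n-b)\bigr)=\tfrac{n-D}{2}.
\]
This gives $m_{\mathcal F}\le \frac{n-D}{2}$, as claimed.

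There is no genuine obstacle here. The only points to check are the identity $\dim U^\perp=n-\dim U$, which holds for any nondegenerate bilinear form on $V$, and that $\mathcal F$ is nonempty so that the minima and maxima exist. Note also that $D$ is the same for $\mathcal F$ and $\mathcal F^\perp$, so the statement is symmetric under passing to $\mathcal F^\perp$, consistent with the normalization it is meant to provide.
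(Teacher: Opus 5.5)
Your proposal is correct and is essentially the paper's argument: the paper also observes that $\supp(\mathcal F)\subseteq[x,x+D]$ gives $\supp(\mathcal F^\perp)\subseteq[n-D-x,n-x]$, and then bounds $m_{\mathcal F}$ by the average of $x$ and $n-D-x$. Your version is slightly more careful, since it makes explicit that $x=\min\supp(\mathcal F)$ and $n-D-x=\min\supp(\mathcal F^\perp)$ are actually attained, and that $\mathcal F\neq\varnothing$ is needed.
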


\begin{proof}[Proof of Lemma~\ref{lem:min-bound}]
If $\supp(\mathcal{F}) \subseteq [x,x+D]$, then
\(
\supp(\mathcal{F}^\perp) \subseteq [n-D-x,n-x].
\)
Hence
\[
m_{\mathcal{F}}
\le
\frac{x+(n-D-x)}{2}
=
\frac{n-D}{2}.
\]
\qedhere
\end{proof}

Combining Lemmas~\ref{lem:orthogonal} and \ref{lem:min-bound}, we may therefore assume, whenever convenient, that
\begin{equation*}
m_{\mathcal{F}}
=
\min\supp(\mathcal{F})
\le
\frac{n-D}{2}.
\end{equation*}
This normalization will be used repeatedly in the sequel.

\section{Full version of Kleitman's theorem in vector spaces: Proof of Theorem~\ref{thm:main}}\label{sec:main-proof}
The cases \(d=2,3\) require only minor modifications of the layerwise argument below. They can also be verified by the small-diameter analysis in~\cite{LLY2026}, with the same equality characterization. Hence, in the remainder of the proof we assume that \(d\ge4\) and put
\(
t:=\lfloor d/2\rfloor\ge 2.
\)
 Let
\[
D:=D(\mathcal F)\le \diam(\mathcal F)\le d.
\]
Set $m:=\min \supp(\mathcal{F})$.
By Lemmas~\ref{lem:orthogonal} and~\ref{lem:min-bound}, after possibly replacing $\mathcal F$ by $\mathcal F^\perp$, we may assume that 
\(
\supp(\mathcal F) \subseteq [m,m+D]\) and
\(
m\le \frac{n-D}{2}.
\)
Set
\(
M:=\max\supp(\mathcal F).
\)
Then $M= m+D$, and hence
\begin{equation}\label{eq:mM}
m+M= 2m+D\le n.
\end{equation}

We shall estimate $\mathcal F$ layer by layer. 
By Lemma~\ref{lem:intersecting}, for every $k\in\supp(\mathcal F)$ the layer $\mathcal F(k)$ is $(k-t)$-intersecting in $V$. 
Thus, whenever $k\ge t$ and $n\ge k+t$, Theorem~\ref{thm:ekrvs} gives
\begin{equation}\label{eq:slice-bound}
|\mathcal F(k)|
\le
\max\left\{
\qbinom{n-k+t}{t},
\qbinom{k+t}{t}
\right\}.
\end{equation}

We now divide the proof according to the position of the support of $\mathcal F$.

\paragraph{Case 1: $m\ge t+1$.}

In this case, \eqref{eq:mM} gives
\(
n\ge m+M\ge M+t+1.
\)
In particular, $n\ge k+t+1$ for every $k\le M$, so \eqref{eq:slice-bound} applies to every layer of $\mathcal F$.

We first estimate the part of the support at or below $n/2$. 
For $k\le \lfloor n/2\rfloor$, \eqref{eq:slice-bound} gives
\(
|\mathcal F(k)|\le \qbinom{n-k+t}{t}.
\)
Therefore
\begin{align}
\qbinom{n}{t}^{-1}
\sum_{m\le k\le \lfloor n/2\rfloor}|\mathcal F(k)|
&\le
\sum_{k=t+1}^{\lfloor n/2\rfloor}
\frac{\qbinom{n-k+t}{t}}{\qbinom{n}{t}} \notag\\
&=
\sum_{k=t+1}^{\lfloor n/2\rfloor}
\frac{(q^{n-k+t}-1)\cdots(q^{n-k+1}-1)}
{(q^n-1)\cdots(q^{n-t+1}-1)} \notag\\
&<
\sum_{k=t+1}^{\lfloor n/2\rfloor}
q^{-t(k-t)}
<
\sum_{r=1}^{\infty}q^{-tr}
=
\frac{1}{q^t-1}.
\label{eq:case1-low}
\end{align}

Similarly, for $k\ge \lfloor n/2\rfloor+1$, \eqref{eq:slice-bound} gives
\(
|\mathcal F(k)|\le \qbinom{k+t}{t}.
\)
Using $k\le M\le n-t-1$, we get
\begin{align}
\qbinom{n}{t}^{-1}
\sum_{\lfloor n/2\rfloor< k\le M}|\mathcal F(k)|
&\le
\sum_{\lfloor n/2\rfloor<k\le M}
\frac{\qbinom{k+t}{t}}{\qbinom{n}{t}} \notag\\
&=
\sum_{\lfloor n/2\rfloor<k\le M}
\frac{(q^{k+t}-1)\cdots(q^{k+1}-1)}
{(q^n-1)\cdots(q^{n-t+1}-1)} \notag\\
&\le
\sum_{\lfloor n/2\rfloor<k\le M}
q^{-t(n-k-t)}
<
\sum_{r=1}^{\infty}q^{-tr}
=
\frac{1}{q^t-1}.
\label{eq:case1-high}
\end{align}
Combining \eqref{eq:case1-low} and \eqref{eq:case1-high}, we obtain
\begin{equation*}
|\mathcal F|
<
\frac{2}{q^t-1}\qbinom{n}{t}
\le
\frac{2}{3}\qbinom{n}{t}.
\end{equation*}
This is strictly smaller than
\(
\sum_{i=0}^{t}\qbinom{n}{i},
\)
and hence also strictly smaller than the odd-diameter extremal value. 
Thus, the equality case cannot occur in Case~1.

\medskip
Henceforth we may assume \(m\le t.\)
Let
\(
r:=\left\lceil \frac d2\right\rceil.
\)
Thus $r=t$ if $d=2t$, and $r=t+1$ if $d=2t+1$.

\paragraph{Case 2: There is a middle layer.}

Assume that
\begin{equation}\label{eq:middle-layer-exists}
\supp(\mathcal F)\cap [r+1,n-t-1]\neq \varnothing.
\end{equation}
Choose
\(
\xi\in \supp(\mathcal F)\cap [r+1,n-t-1].
\)
We shall show that this forces a strict loss from the extremal bound, except for a boundary odd case which is still handled by Lemma~\ref{lem:equality-odd}.

First decompose
\begin{align*}
|\mathcal F|
&=
\sum_{k=0}^{t-1}
\bigl(|\mathcal F(k)|+|\mathcal F(n-k)|\bigr)
+
\bigl(|\mathcal F(t)|+|\mathcal F(n-t)|\bigr)
+
\sum_{k=t+1}^{n-t-1}|\mathcal F(k)|.
\end{align*}
By Corollary~\ref{lem:complementsum-1},
\begin{equation}\label{eq:lower-pairs}
\sum_{k=0}^{t-1}
\bigl(|\mathcal F(k)|+|\mathcal F(n-k)|\bigr)
\le
\sum_{k=0}^{t-1}\qbinom{n}{k}.
\end{equation}

We now estimate the remaining terms.

\paragraph{First estimate: the pair of layers $\mathcal F(t)$ and $\mathcal F(n-t)$.}

Suppose first that $\mathcal F(n-t)=\varnothing$. 
Fix $Y\in\mathcal F(\xi)$. 
For every $X\in\mathcal F(t)$, the diameter condition gives
\(
\Delta(X,Y)\le d,
\)
and hence
\[
2\dim(X\cap Y)
\ge
t+\xi-d.
\]
Since $\xi\ge r+1$, every $X\in\mathcal F(t)$ meets $Y$ nontrivially. 
By Lemma~\ref{lem:subspacecounting},
\begin{equation*}
|\mathcal F(t)|
\le
\qbinom{n}{t}
-
q^{\xi t}\qbinom{n-\xi}{t}.
\end{equation*}
Moreover,
\begin{align*}
\frac{q^{\xi t}\qbinom{n-\xi}{t}}{\qbinom{n}{t}}
&=
\frac{(q^n-q^\xi)(q^{n-1}-q^\xi)\cdots(q^{n-t+1}-q^\xi)}
{(q^n-1)(q^{n-1}-1)\cdots(q^{n-t+1}-1)}
\notag\\
&\ge
\prod_{i=1}^{t}\left(1-q^{-(n-\xi-t+i)}\right)
\ge
\prod_{i=1}^{t}\left(1-q^{-(i+1)}\right)
\notag\\
&\ge
\prod_{i=1}^{t}\left(1-2^{-(i+1)}\right).
\end{align*}
For $t=2$, the final product is
\(
\left(1-\frac14\right)\left(1-\frac18\right)=\frac{21}{32}.
\)
For $t\ge 3$, we use the standard estimate
\(
\prod_{i=1}^{t}\left(1-2^{-(i+1)}\right)\ge \frac12.
\)
Thus
\begin{equation}\label{eq:c1-A}
|\mathcal F(t)|+|\mathcal F(n-t)|
\le
c_1\qbinom{n}{t},
\qquad
c_1:=
\begin{cases}
11/32, & t=2,\\
1/2, & t\ge 3.
\end{cases}
\end{equation}

The dual argument gives the corresponding estimate when $\mathcal F(t)=\varnothing$, provided that either $d=2t$, or $d=2t+1$ and $\xi\le n-t-2$. 
Indeed, in this case $\dim Y^\perp=n-\xi$ is at least $t+1$ in the even case and at least $t+2$ in the odd case. 
For every $X\in\mathcal F(n-t)$, put $Z=X^\perp\in\mathcal V(t)$. 
Since $\Delta(Z,Y^\perp)=\Delta(X,Y)\le d$, we get
\(
\dim(Z\cap Y^\perp)\ge 1.
\)
Hence
\(
|\mathcal F(n-t)|
\le
\qbinom{n}{t}
-
q^{(n-\xi)t}\qbinom{\xi}{t}.
\)
The same estimate as above gives
\begin{equation}\label{eq:c1-B}
|\mathcal F(t)|+|\mathcal F(n-t)|
\le
c_1\qbinom{n}{t}.
\end{equation}

Finally, suppose that both $\mathcal F(t)$ and $\mathcal F(n-t)$ are nonempty. 
By Lemma~\ref{lem:complementsum-2},
\[
|\mathcal F(t)|+|\mathcal F(n-t)|
\le
\qbinom{n}{t}-q^{t(n-t)}+1.
\]
Furthermore,
\begin{equation*}
\frac{q^{t(n-t)}}{\qbinom{n}{t}}
=
\frac{(q^n-q^{n-t})(q^{n-1}-q^{n-t})\cdots(q^{n-t+1}-q^{n-t})}
{(q^n-1)(q^{n-1}-1)\cdots(q^{n-t+1}-1)}
\ge
\prod_{i=1}^{t}(1-q^{-i}).
\end{equation*}
Set $f(x) := \prod_{i=1}^{\infty} (1 - x^i)^{-1}$ for $x < \tfrac{2}{3}$. By the discussion in \cite{LLY2026}, we have $f(x) \leqslant \frac{2}{2 - 3x}$. Therefore, if $q \geqslant 3$, then $\prod_{i=1}^{\infty} (1 - q^{-i}) \geqslant \frac{1}{f(1/3)} \geqslant \frac{1}{2}$. Also notice that as $\prod_{i=1}^{\infty} (1 - 2^{-i}) \approx 0.288788095$ is a constant (see~\cite{B1980}), for $q=2$ we have
$\prod_{i=1}^{\infty} \left(1 - 2^{-i}\right) \geqslant 0.2887$.

Therefore, there is some constant $c_2=c_2(q,t,n,d)$ such that
\begin{equation}\label{eq:c2}
|\mathcal F(t)|+|\mathcal F(n-t)|
\le
c_2\qbinom{n}{t}+1,
\end{equation}
where we may take
\[
c_2=
\begin{cases}
1/8, & q=2,\ t=2,\ n\ge d+3,\\
0.7113, & q=2,\ t=2,\ n=d+2,\\
43/64, & q=2,\ t=3,\\
0.7113, & q=2,\ t\ge 4,\\
1/2, & q\ge 3.
\end{cases}
\]
Here the special improvement $c_2=1/8$ for $q=2$, $t=2$, and $n\ge d+3$ follows because in that case the pair $(\mathcal F(2),\mathcal F(n-2))$ is cross-$2$-intersecting, and hence every $2$-space in $\mathcal F(2)$ is contained in every $(n-2)$-space in $\mathcal F(n-2)$. Thus
\[
|\mathcal F(2)|+|\mathcal F(n-2)|
\le
2\qbinom{n-2}{2}
<
\frac18\qbinom{n}{2}.
\]

\paragraph{Second estimate: the interior layers.}

Let
\[
\mathcal G:=
\{A\in\mathcal F:t+1\le \dim A\le n-t-1\}.
\]
We claim that
\begin{equation}\label{eq:c3}
|\mathcal G|
\le
c_3\qbinom{n}{t},\qquad 
c_3=
\begin{cases}
41/64, & q=t=2,\\
2/7, & q=2,\ t=3,\\
1/4, & q\ge 3,\ \text{or }q=2\text{ and }t\ge 4.
\end{cases}
\end{equation}

To prove this, let $d_{\mathcal G}:=\diam(\mathcal G)$ and $t_{\mathcal G}:=\lfloor d_{\mathcal G}/2\rfloor$. 
If $t_{\mathcal G}=0$, then $|\mathcal G|\le 2$, and \eqref{eq:c3} is immediate. 
Assume $t_{\mathcal G}\ge 1$. 
The support of $\mathcal G$ is contained in $[t+1,n-t-1]$. 
Passing to $\mathcal G^\perp$ if necessary, we may assume that the support interval satisfies the required normalization, namely, $\min\supp(\mathcal G)+\max\supp(\mathcal G)\le n$, in Case~1. 
Since the first nonempty layer of both $\mathcal G$ and $\mathcal G^\perp$ is at least $t+1\ge t_{\mathcal G}+1$, the argument of Case~1 gives
\begin{equation}\label{eq:G-case1}
|\mathcal G|
<
\frac{2}{q^{t_{\mathcal G}}-1}\qbinom{n}{t_{\mathcal G}}.
\end{equation}
If \(t_{\mathcal G}<t\), then \(n\ge d+1\ge 2t+1\) and the monotonicity of Gaussian coefficients give
\(
\frac{2}{q^{t_{\mathcal G}}-1}\qbinom{n}{t_{\mathcal G}}
\le
\frac{2}{q^{t}-1}\qbinom{n}{t}.
\)
Thus \eqref{eq:G-case1} implies
\(
|\mathcal G|
<
\frac{2}{q^t-1}\qbinom{n}{t}.
\)
This is at most $\frac{2}{7}\qbinom{n}{t}$ when $q=2,t=3$, and at most $\frac14\qbinom{n}{t}$ when $q\ge 3$ or $q=2,t\ge 4$.

It remains only to record the slightly sharper estimate needed for $q=t=2$. 
Since $q=t=2$, we have $d\in\{4,5\}$ and hence $D(\mathcal F)\le 5$.
Moreover, in the present case we have $m\le t=2$, while every member of $\mathcal G$
has dimension at least $3$. Therefore the number of possible dimensions occurring in
$\mathcal G$ is at most $D(\mathcal F)\le 5$. 
For a layer $k\le n/2$ in the support of $\mathcal G$, the proof of Case~1 gives
\(
\qbinom{n}{2}^{-1}|\mathcal G(k)|
<
2^{-2(k-2)}.
\)
For a layer $k>n/2$, it gives
\(
\qbinom{n}{2}^{-1}|\mathcal G(k)|
<
2^{-2(n-k-2)}.
\)
Taking the five largest possible contributions gives
\[
|\mathcal G|
\le
\left(
\frac14+\frac14+\frac1{16}+\frac1{16}+\frac1{64}
\right)\qbinom{n}{2}
=
\frac{41}{64}\qbinom{n}{2}.
\]
This proves \eqref{eq:c3}.

\paragraph{Third estimate: putting the previous bounds together.}
Combining \eqref{eq:lower-pairs}, \eqref{eq:c1-A}, \eqref{eq:c1-B}, \eqref{eq:c2}, and \eqref{eq:c3}, we obtain the following conclusion.  Except possibly in the following cases:
\begin{enumerate}
\item[\textup{(1)}]
$q=t=2$, $n=d+2$, and both $\mathcal F(t)$ and $\mathcal F(n-t)$ are nonempty;

\item[\textup{(2)}]
\(d=2t+1\), \(\mathcal F(t)=\varnothing\),
\(\mathcal F(n-t-1)\neq\varnothing\), and
\(\mathcal F(k)=\varnothing\) for all \(t+2\le k\le n-t-2\),
\end{enumerate}
we have
\(
c_2+c_3\le \frac{63}{64}.
\)
Indeed, this is immediate from the above choices of \(c_2\) and \(c_3\): for \(q=t=2\) and \(n\ge d+3\) we have \(\frac18+\frac{41}{64}<\frac{63}{64}\); for \(q=2,t=3\) we have \(\frac{43}{64}+\frac{2}{7}<\frac{63}{64}\); for \(q=2,t\ge4\) we have \(\frac{365}{512}+\frac{1}{4}<\frac{63}{64}\); and for \(q\ge3\) we have \(\frac{1}{2}+\frac{1}{4}<\frac{63}{64}\).
Therefore,
\[
|\mathcal F(t)|+|\mathcal F(n-t)|+
\sum_{k=t+1}^{n-t-1}|\mathcal F(k)|
\le
\frac{63}{64}\qbinom{n}{t}+1.
\]
Since
\(
\qbinom{n}{t}\ge \qbinom{5}{2}\ge 155,
\)
we have
\(
1<\frac1{64}\qbinom{n}{t}.
\)
Therefore
\[
|\mathcal F(t)|+|\mathcal F(n-t)|+
\sum_{k=t+1}^{n-t-1}|\mathcal F(k)|
<
\qbinom{n}{t}.
\]
Together with \eqref{eq:lower-pairs}, this gives
\(
|\mathcal F|
<
\sum_{k=0}^{t}\qbinom{n}{k}.
\)
Hence in this situation $\mathcal F$ is strictly below the even extremal value, and therefore also strictly below the odd extremal value.

It remains to discuss the exceptional case
\(
q=t=2,\) \( n=d+2\)
with both $\mathcal F(t)$ and $\mathcal F(n-t)$ nonempty. 
If $d=2t$, then the only interior layer is $\mathcal F(t+1)$, and
\begin{align}\label{q=t=2-bound}
|\mathcal F(t+1)|\le \qbinom{n-1}{t}<\frac14\qbinom{n}{t}.  
\end{align}
Together with \eqref{eq:c2}, this again gives
\[
|\mathcal F(t)|+|\mathcal F(n-t)|+|\mathcal F(t+1)|
<
\qbinom{n}{t}.
\]
Thus the even case is still strict.

If $d=2t+1$, then the interior layers are $\mathcal F(t+1)$ and $\mathcal F(t+2)$, and each has size at most $\qbinom{n-1}{t}$. 
By \eqref{eq:c2} and \eqref{q=t=2-bound}, we have
\[
|\mathcal F(t)|+|\mathcal F(n-t)|+|\mathcal F(t+1)|+|\mathcal F(t+2)|
<
\qbinom{n}{t}+\qbinom{n-1}{t}.
\]
Hence the odd extremal bound is also strict.

We have proved that if \eqref{eq:middle-layer-exists} holds, then equality in Theorem~\ref{thm:main} cannot occur, except possibly in the following odd boundary configuration:
\(
d=2t+1,\)
\(\mathcal F(t)=\varnothing,\)
\(
\mathcal F(n-t-1)\neq\varnothing\)
and
\(
\mathcal F(k)=\varnothing\)
for all \(t+2\le k\le n-t-2.\)
We now treat this remaining configuration.

If $n=d+1=2t+2$, then $n-t-1=t+1$. 
The family $\mathcal F(t+1)$ is $1$-intersecting in $V$, so by Theorem~\ref{thm:ekrvs},
\(
|\mathcal F(t+1)|\le \qbinom{n-1}{t}.
\)
Using Corollary~\ref{lem:complementsum-1} for $k=0,1,\ldots,t$, we get
\[
|\mathcal F|
\le
\sum_{k=0}^{t}\qbinom{n}{k}
+
\qbinom{n-1}{t}.
\]
Equality holds precisely when
\(
|\mathcal F(k)|+|\mathcal F(n-k)|=\qbinom{n}{k}
\)
for \(k=0,1,\ldots,t,\)
and
\(
|\mathcal F(t+1)|=\qbinom{n-1}{t}.
\)
By Lemma~\ref{lem:equality-odd}, this is exactly the equality case described in Theorem~\ref{thm:main}(2).

Now assume $n\ge d+2$. 
If $\mathcal F(t+1)\neq\varnothing$, then both $\mathcal F(t+1)$ and $\mathcal F(n-t-1)$ have size at most $\qbinom{n-1}{t}$. 
Moreover, the same dual deficit estimate as in \eqref{eq:c1-B} gives
\(
|\mathcal F(n-t)|\le c_1\qbinom{n}{t}.
\)
Consequently,
\[
|\mathcal F|
\le
\sum_{k=0}^{t-1}\qbinom{n}{k}
+
2\qbinom{n-1}{t}
+
c_1\qbinom{n}{t}.
\]
Since
\(
\frac{\qbinom{n-1}{t}}{\qbinom{n}{t}}
=
\frac{q^{n-t}-1}{q^n-1}
<
q^{-t}
\le \frac14,
\)
and $c_1\le \frac12$, the right-hand side is strictly smaller than
\(
\sum_{k=0}^{t}\qbinom{n}{k}
+
\qbinom{n-1}{t}.
\)

Finally suppose $\mathcal F(t+1)=\varnothing$. 
Then
\[
|\mathcal F|
\le
\sum_{k=0}^{t}\qbinom{n}{k}
+
\qbinom{n-1}{t}.
\]
If equality held, then
\(
|\mathcal F(k)|+|\mathcal F(n-k)|=\qbinom{n}{k}\)
for \(k=0,1,\ldots,t,\)
and
\(
|\mathcal F(n-t-1)|=\qbinom{n-1}{t},
\)
with $\mathcal F(n-t-1)$ being $(n-2t-1)$-intersecting in $V$. 
By the assumption $m\le t$, if equality holds, then we claim that $\mathcal{F}(0) \neq \varnothing$. 
Suppose instead that \(\mathcal F(0)=\varnothing\), and let
\(s:=\min\supp(\mathcal F)\). Since \(\mathcal F(t)=\varnothing\) in the present subcase, we have
\(
1 \le s \le t-1.
\) 
Equality in the complementary-pair estimates gives
\(\mathcal F(s)=\mathcal V(s)\). Also, by the minimality of \(s\), we have
\(\mathcal F(s-1)=\varnothing\), and hence equality for the pair
\((s-1,n-s+1)\) gives
\(
\mathcal F(n-s+1)=\mathcal V(n-s+1).
\)
Choose \(X\in\mathcal V(s)\) and \(Y\in\mathcal V(n-s+1)\) with
\(\dim(X\cap Y)=1\). Then
\[
\Delta(X,Y)=s+(n-s+1)-2=n-1>d,
\]
because \(n\ge d+2\), contradicting \(\diam(\mathcal F)\le d\). Now Lemma~\ref{lem:equality-odd} applies through condition (b). Since
\(\mathcal F(0)\neq\varnothing\), its conclusion cannot hold for
\(\mathcal F^\perp\). Hence it must hold for \(\mathcal F\). But this would
force
\(
|\mathcal F(t+1)|=\qbinom{n-1}{t},
\)
contrary to the present assumption \(\mathcal F(t+1)=\varnothing\).
Thus equality cannot hold in this subcase.

This completes the proof in Case~2.

\paragraph{Case 3: There is no middle layer.}

We now assume
\begin{equation}\label{eq:no-middle-layer}
\supp(\mathcal F)\cap [r+1,n-t-1]=\varnothing.
\end{equation}

First suppose $d=2t$. 
Then $r=t$, so \eqref{eq:no-middle-layer} says that
\(
\mathcal F(k)=\varnothing\)
for \(t+1\le k\le n-t-1.\)
Hence
\[
|\mathcal F|
=
\sum_{k=0}^{t}
\bigl(|\mathcal F(k)|+|\mathcal F(n-k)|\bigr).
\]
By Corollary~\ref{lem:complementsum-1},
\(
|\mathcal F|
\le
\sum_{k=0}^{t}\qbinom{n}{k}.
\)
If equality holds, then
\(
|\mathcal F(k)|+|\mathcal F(n-k)|=\qbinom{n}{k}\)
for \(k=0,1,\ldots,t.\)
By Lemma~\ref{lem:equality}, either $\mathcal F$ or $\mathcal F^\perp$ satisfies one of the following:
if $n=d+1$, then for each $k=0,1,\ldots,t$, exactly one of the two complementary layers $\mathcal V(k)$ and $\mathcal V(n-k)$ is fully present; if $n\ge d+2$, then
\[
\mathcal F=\bigcup_{k=0}^{t}\mathcal V(k).
\]
These are exactly the equality cases in Theorem~\ref{thm:main}(1) and~(3).

Now suppose $d=2t+1$. 
Then $r=t+1$, so \eqref{eq:no-middle-layer} says that
\(
\mathcal F(k)=\varnothing\)
for \(t+2\le k\le n-t-1.\)
Thus
\[
|\mathcal F|
=
\sum_{k=0}^{t}
\bigl(|\mathcal F(k)|+|\mathcal F(n-k)|\bigr)
+
|\mathcal F(t+1)|.
\]
The family $\mathcal F(t+1)$ is $1$-intersecting in $V$ by Lemma~\ref{lem:intersecting}. 
By Theorem~\ref{thm:ekrvs},
\(
|\mathcal F(t+1)|\le \qbinom{n-1}{t}.
\)
Using Corollary~\ref{lem:complementsum-1}, we get
\[
|\mathcal F|
\le
\sum_{k=0}^{t}\qbinom{n}{k}
+
\qbinom{n-1}{t}.
\]
If equality holds, then
\(
|\mathcal F(k)|+|\mathcal F(n-k)|=\qbinom{n}{k}\)
for \(k=0,1,\ldots,t,\)
and
\(
|\mathcal F(t+1)|=\qbinom{n-1}{t},\) and
\(
\mathcal F(t+1)\) is \(1\)-intersecting in \(V.
\)
By Lemma~\ref{lem:equality-odd}, either $\mathcal F$ or $\mathcal F^\perp$ satisfies one of the following:
if $n=d+1$, then the complementary layers split as in Theorem~\ref{thm:main}(2), and the middle layer is a maximum $1$-intersecting family; if $n\ge d+2$, then
\[
\mathcal F
=
\bigcup_{k=0}^{t}\mathcal V(k)
\cup
\{A\in\mathcal V(t+1):X \le A\}
\]
for some fixed $1$-dimensional subspace $X \in \mathcal{V}(1)$. 
These are exactly the equality cases in Theorem~\ref{thm:main}(2) and~(4). This completes the proof.

\section{Proof of Theorem~\ref{thm:stab-typeA-even}}\label{sec:stab-even-A}
Let $m:=\min\supp(\mathcal F)$ and $M:=\max\supp(\mathcal F)$. 
Since the map $A\mapsto A^\perp$ is an isometry of $(\mathcal V,\Delta)$ and interchanges $\mathcal L_t$ and $\mathcal U_t$, $(\mathfrak E_A^{\mathrm{2t}},2t)$-admissibility is preserved under taking orthogonal complements. 
Thus, after possibly replacing $\mathcal F$ by $\mathcal F^\perp$, we may assume that
\begin{equation}\label{eq:typeA-even-normalize}
m+M\le n.
\end{equation}
Since $\mathcal F\not\subseteq \mathcal L_t=\bigcup_{i=0}^{t}\mathcal V(i)$, we have $M\ge t+1$.

We first note that
\(
M\le n-t-2.
\)
Indeed, if $m\ge t+2$, then \eqref{eq:typeA-even-normalize} gives $M\le n-m\le n-t-2$. 
If $m\le t+1$, then $M-m\le \diam(\mathcal F)\le 2t$, so
\(
M\le m+2t\le 3t+1\le n-t-2,
\)
where the last inequality follows from $n\ge 7t+5$.

We write
\[
g(n,t):=\sum_{i=0}^{t-1}\qbinom{n}{i}+\qbinom{n-1}{t-1}+\qbinom{n-1}{t}.
\]
We prove that $|\mathcal F|\le g(n,t)$, with equality only in the asserted cases.

First suppose that $M\ge t+2$. 
We shall show that then $|\mathcal F|<g(n,t)$. 
Since $|\mathcal F(k)|\le \qbinom{n}{k}$ for $0\le k\le t-1$, we have
\[
|\mathcal F|
\le
\sum_{i=0}^{t-1}\qbinom{n}{i}
+
|\mathcal F(t)|+|\mathcal F(t+1)|
+
\sum_{k=t+2}^{M}|\mathcal F(k)|.
\]

We first bound the tail. 
For each $k\ge t+2$, Lemma~\ref{lem:intersecting} implies that $\mathcal F(k)$ is $(k-t)$-intersecting in $V$. 
Since $M\le n-t-2$, we have $n\ge k+t+2$ for all $k\le M$, and hence Theorem~\ref{thm:ekrvs} gives
\[
|\mathcal F(k)|
\le
\max\left\{
\qbinom{n-k+t}{t},
\qbinom{k+t}{t}
\right\}.
\]
Splitting at $\lfloor n/2\rfloor$, we obtain
\begin{equation*}
\qbinom{n}{t}^{-1}
\sum_{t+2\le k\le \lfloor n/2\rfloor}|\mathcal F(k)|
\le
\sum_{k=t+2}^{\lfloor n/2\rfloor}
\frac{\qbinom{n-k+t}{t}}{\qbinom{n}{t}} <
\sum_{k=t+2}^{\lfloor n/2\rfloor}q^{-t(k-t)}
<
\frac{1}{q^t(q^t-1)}.
\end{equation*}
Similarly,
\begin{equation*}
\qbinom{n}{t}^{-1}
\sum_{\lfloor n/2\rfloor< k\le M}|\mathcal F(k)|
\le
\sum_{\lfloor n/2\rfloor< k\le M}
\frac{\qbinom{k+t}{t}}{\qbinom{n}{t}} <
\sum_{\lfloor n/2\rfloor< k\le M}q^{-t(n-k-t)}
<
\frac{1}{q^t(q^t-1)}.
\end{equation*}
Therefore
\begin{align*}
\sum_{k=t+2}^{M}|\mathcal F(k)|
<
\frac{2}{q^t(q^t-1)}\qbinom{n}{t}.
\end{align*}
Moreover,
\begin{equation*}
\frac{\frac{2}{q^t(q^t-1)}\qbinom{n}{t}}{\qbinom{n-1}{t}}
=
\frac{2}{q^t(q^t-1)}
\cdot
\frac{q^n-1}{q^{n-t}-1} <
\frac{2(q^t+1)}{q^t(q^t-1)}
\le \frac56,
\end{equation*}
where we used $q^t\ge 4$. Hence
\begin{equation}\label{eq:typeA-tail-final}
\sum_{k=t+2}^{M}|\mathcal F(k)|<\frac56\qbinom{n-1}{t}.
\end{equation}

Next we bound $\mathcal F(t)$ and $\mathcal F(t+1)$. Fix $A\in\mathcal F(M)$. If $\mathcal F(t)\neq\varnothing$, then for every $B\in\mathcal F(t)$, the diameter condition gives $\Delta(A,B)\le 2t$, and so
\[
2\dim(A\cap B)=M+t-\Delta(A,B)\ge M-t.
\]
Since $M\ge t+2$, every $B\in\mathcal F(t)$ meets $A$ nontrivially. 
Also, the existence of $B\in\mathcal F(t)$ gives $M-t\le 2t$, hence $M\le 3t$. 
Thus
\begin{equation}\label{eq:typeA-Ft-bound}
|\mathcal F(t)|
\le
\qbinom{M}{1}\qbinom{n-1}{t-1}
\le
\qbinom{3t}{1}\qbinom{n-1}{t-1}.
\end{equation}
The same bound is of course trivial if $\mathcal F(t)=\varnothing$.

Similarly, if $\mathcal F(t+1)\neq\varnothing$, then for every $B\in\mathcal F(t+1)$ we have
\[
2\dim(A\cap B)=M+t+1-\Delta(A,B)\ge M-t+1.
\]
Since $M\ge t+2$, this implies $\dim(A\cap B)\ge 2$. 
Moreover, the existence of a member of $\mathcal F(t+1)$ gives $M-(t+1)\le 2t$, hence $M\le 3t+1$. 
Therefore
\begin{equation}\label{eq:typeA-Ftplus-bound}
|\mathcal F(t+1)|
\le
\qbinom{M}{2}\qbinom{n-2}{t-1}
\le
\qbinom{3t+1}{2}\qbinom{n-2}{t-1}.
\end{equation}
Again this is trivial if $\mathcal F(t+1)=\varnothing$.

We now compare the two error terms with $\qbinom{n-1}{t}$. 
First,
\begin{equation}\label{eq:typeA-ratio1}
\frac{\qbinom{3t}{1}\qbinom{n-1}{t-1}}{\qbinom{n-1}{t}}
=
\qbinom{3t}{1}\frac{q^t-1}{q^{n-t}-1} \le
q^{3t}\frac{q^t}{q^{n-t-1}}
=
\frac{1}{q^{n-5t-1}}
\le \frac{1}{q^8}
\le \frac{1}{256}.
\end{equation}
Here we used $n\ge 7t+5$ and $t\ge 2$. 
Second,
\begin{equation}\label{eq:typeA-ratio2}
    \frac{\qbinom{3t+1}{2}\qbinom{n-2}{t-1}}{\qbinom{n-1}{t}}
=
\qbinom{3t+1}{2}
\frac{q^t-1}{q^{n-1}-1}
=
\frac{(q^{3t+1}-1)(q^{3t}-1)(q^t-1)}
{(q^2-1)(q-1)(q^{n-1}-1)} 
\le
\frac{q^{7t+1}}{(q^2-1)(q-1)q^{n-2}}
\le
\frac{1}{8}.
\end{equation}
Combining \eqref{eq:typeA-tail-final}, \eqref{eq:typeA-Ft-bound}, \eqref{eq:typeA-Ftplus-bound}, \eqref{eq:typeA-ratio1}, and \eqref{eq:typeA-ratio2}, we get
\[
|\mathcal F|
<
\sum_{i=0}^{t-1}\qbinom{n}{i}
+
\left(\frac56+\frac{1}{256}+\frac18\right)\qbinom{n-1}{t}
<
\sum_{i=0}^{t-1}\qbinom{n}{i}
+
\qbinom{n-1}{t}
<
g(n,t).
\]
Thus equality cannot occur when $M\ge t+2$.

We may therefore assume $M=t+1$. 
Then $\mathcal F(k)=\varnothing$ for all $k\ge t+2$, and $\mathcal F(t+1)\neq\varnothing$.

Suppose first that $\mathcal F(t+1)$ is not contained in any star. 
Fix $A\in\mathcal F(t+1)$. 
For every $B\in\mathcal F(t)$, we have
\[
2\dim(A\cap B)=2t+1-\Delta(A,B)\ge 1,
\]
so every member of $\mathcal F(t)$ meets $A$ nontrivially. Hence
\begin{equation}\label{eq:typeA-case2-Ft}
|\mathcal F(t)|
\le
\qbinom{t+1}{1}\qbinom{n-1}{t-1}.
\end{equation}
By Lemma~\ref{lem:intersecting}, the family $\mathcal F(t+1)$ is $1$-intersecting in $V$. 
Since it is not contained in any star,  Lemma~\ref{lem:nontrivial-1-intersecting} implies that
\[
|\mathcal F(t+1)|
\le
\qbinom{n-1}{t}
-
q^{t(t+1)}\qbinom{n-t-2}{t}
+
q^{t+1}
\le
\qbinom{t+1}{1}\qbinom{n-2}{t-1},
\]
where the last inequality follows from Lemma~\ref{lem:nontrivial-s-intersecting-2} when \(t\ge3\). When \(t=2\), it follows from the identity
\[
\qbinom{3}{1}\qbinom{n-2}{1}
-
\left(
\qbinom{n-1}{2}
-
q^6\qbinom{n-4}{2}
+
q^3
\right)
=
q(q+1)>0.
\]
Consequently,
\[
|\mathcal F|
\le
\sum_{i=0}^{t-1}\qbinom{n}{i}
+
2\qbinom{t+1}{1}\qbinom{n-1}{t-1}.
\]
Furthermore,
\begin{equation}\label{eq:typeA-case2-ratio}
\frac{\qbinom{t+1}{1}\qbinom{n-1}{t-1}}{\qbinom{n-1}{t}}
=
\qbinom{t+1}{1}\frac{q^t-1}{q^{n-t}-1} \le
q^{t+1}\frac{q^t}{q^{n-t-1}}
=
\frac{1}{q^{n-3t-2}}
\le
\frac{1}{q^{4t+3}}
\le
\frac{1}{2048}.
\end{equation}
It follows that
\[
2\qbinom{t+1}{1}\qbinom{n-1}{t-1}
\le
\frac{1}{1024}\qbinom{n-1}{t}
<
\qbinom{n-1}{t-1}+\qbinom{n-1}{t}.
\]
Thus $|\mathcal F|<g(n,t)$ in this case as well.

It remains to consider the case where $\mathcal F(t+1)$ is contained in a star. 
Choose $X\in\mathcal V(1)$ such that $X\le A$ for every $A\in\mathcal F(t+1)$. Assume first that there exists $B_0\in\mathcal F(t)$ with $X\not\le B_0$. 
For every $A\in\mathcal F(t+1)$, the diameter condition gives $\dim(A\cap B_0)\ge 1$. 
Since $X\le A$ but $X\not\le B_0$, the intersection $A\cap B_0$ contains a $1$-subspace of $B_0$. 
A union bound over the $1$-subspaces of $B_0$ gives
\begin{equation*}
|\mathcal F(t+1)|
\le
\qbinom{t}{1}\qbinom{n-2}{t-1}.
\end{equation*}
On the other hand, \eqref{eq:typeA-case2-Ft} still gives
\begin{equation*}
|\mathcal F(t)|
\le
\qbinom{t+1}{1}\qbinom{n-1}{t-1}.
\end{equation*}
Using the same comparison as in \eqref{eq:typeA-case2-ratio}, we get
\[
|\mathcal F(t)|+|\mathcal F(t+1)|
<
2\qbinom{t+1}{1}\qbinom{n-1}{t-1}
<
\qbinom{n-1}{t-1}+\qbinom{n-1}{t}.
\]
Therefore $|\mathcal F|<g(n,t)$.

We are left with the case where every member of $\mathcal F(t)$ contains $X$. 
Then
\[
|\mathcal F(t)|\le \qbinom{n-1}{t-1},\quad
|\mathcal F(t+1)|\le \qbinom{n-1}{t},
\]
and trivially $|\mathcal F(k)|\le \qbinom{n}{k}$ for $0\le k\le t-1$. 
Hence
\[
|\mathcal F|
\le
\sum_{i=0}^{t-1}\qbinom{n}{i}
+
\qbinom{n-1}{t-1}
+
\qbinom{n-1}{t}
=
g(n,t).
\]

If equality holds, then all these inequalities must be equalities. 
Thus $\mathcal F(k)=\mathcal V(k)$ for $0\le k\le t-1$, and
\[
\mathcal F(t)=\{A\in\mathcal V(t):X\le A\},\quad
\mathcal F(t+1)=\{A\in\mathcal V(t+1):X\le A\}.
\]
Therefore
\[
\mathcal F
=
\bigcup_{k=0}^{t-1}\mathcal V(k)
\cup
\{A\in\mathcal V(t):X\le A\}
\cup
\{A\in\mathcal V(t+1):X\le A\}.
\]
This is exactly $\mathcal B(X,t)$. Indeed, for $A\in\mathcal V(k)$ we have $\Delta(X,A)=1+k-2\dim(X\cap A)$, so $\Delta(X,A)\le t$ holds precisely when either $k\le t-1$, or $k\in\{t,t+1\}$ and $X\le A$.

Conversely, for every $X\in\mathcal V(1)$, the ball $\mathcal B(X,t)$ has diameter at most $2t$ by the triangle inequality. 
It is not contained in $\mathcal L_t$ because it contains $(t+1)$-subspaces, and it is not contained in $\mathcal U_t$ because it contains the zero subspace. 
Moreover,
\[
|\mathcal B(X,t)|
=
\sum_{i=0}^{t-1}\qbinom{n}{i}
+
\qbinom{n-1}{t-1}
+
\qbinom{n-1}{t}
=
g(n,t).
\]
Thus $\mathcal B(X,t)$ attains equality.

Finally, if we initially replaced $\mathcal F$ by $\mathcal F^\perp$ to ensure \eqref{eq:typeA-even-normalize}, the equality case translates back to $\mathcal F^\perp=\mathcal B(X,t)$ for some $X\in\mathcal V(1)$. 
Hence equality holds if and only if $\mathcal F=\mathcal B(X,t)$ or $\mathcal F^\perp=\mathcal B(X,t)$ for some $X\in\mathcal V(1)$.

\section{Proof of Theorem~\ref{thm:stab-type-odd}}\label{sec:stab-odd}
Set
\[
H(n,t):=\qbinom{n-1}{t}
-
q^{t(t+1)}\qbinom{n-t-2}{t}
+
q^{t+1}.
\]
Thus the claimed upper bound is
\(
\sum_{i=0}^{t}\qbinom{n}{i}+H(n,t).
\)
We first prove the result for $(\mathfrak E_A^{\mathrm{2t+1}},2t+1)$-admissible families. 
Let $\mathcal F\subseteq\mathcal V$ be $(\mathfrak E_A^{\mathrm{2t+1}},2t+1)$-admissible, and hence satisfy $\diam(\mathcal F)\le 2t+1$. 
Since taking orthogonal complements is an isometry and interchanges the two canonical sides, $(\mathfrak E_A^{\mathrm{2t+1}},2t+1)$-admissibility is preserved under replacing $\mathcal F$ by $\mathcal F^\perp$. 
As before, write $m:=\min\supp(\mathcal F)$ and $M:=\max\supp(\mathcal F)$. 
After possibly replacing $\mathcal F$ by $\mathcal F^\perp$, we may assume that $m+M\le n$.

Since $\mathcal F$ is not contained in any canonical double ball $\mathcal D_t(X)$, it is not contained in $\bigcup_{i=0}^{t}\mathcal V(i)$. 
Hence $M\ge t+1$.

We first show that the case $M\ge t+2$ is far below the desired bound. 
Under our normalization, we have $M\le n-t-2$. 
Indeed, if $m\ge t+2$, then $M\le n-m\le n-t-2$. 
If $m\le t+1$, then $M-m\le \diam(\mathcal F)\le 2t+1$, so
\(
M\le m+2t+1\le 3t+2\le n-t-2,
\)
where the last inequality follows from $n\ge 5t+3$. We decompose
\[
|\mathcal F|
\le
\sum_{i=0}^{t-1}\qbinom{n}{i}
+
|\mathcal F(t)|+|\mathcal F(t+1)|
+
\sum_{k=t+2}^{M}|\mathcal F(k)|.
\]
We first estimate the tail. 
For every $k\ge t+2$, Lemma~\ref{lem:intersecting} implies that $\mathcal F(k)$ is $(k-t)$-intersecting in $V$. 
Since $M\le n-t-2$, we have $n\ge k+t+2$ for every $k\le M$, so Theorem~\ref{thm:ekrvs} gives
\[
|\mathcal F(k)|
\le
\max\left\{
\qbinom{n-k+t}{t},
\qbinom{k+t}{t}
\right\}.
\]

Splitting at $\lfloor n/2\rfloor$, we obtain
\[
\qbinom{n}{t}^{-1}
\sum_{t+2\le k\le \lfloor n/2\rfloor}|\mathcal F(k)|
<
\sum_{k=t+2}^{\lfloor n/2\rfloor}q^{-t(k-t)}
<
\frac{1}{q^t(q^t-1)}.
\]
Similarly,
\[
\qbinom{n}{t}^{-1}
\sum_{\lfloor n/2\rfloor<k\le M}|\mathcal F(k)|
<
\sum_{\lfloor n/2\rfloor<k\le M}q^{-t(n-k-t)}
<
\frac{1}{q^t(q^t-1)}.
\]
Therefore
\[
\sum_{k=t+2}^{M}|\mathcal F(k)|
<
\frac{2}{q^t(q^t-1)}\qbinom{n}{t}.
\]
Moreover,
\[
\frac{\frac{2}{q^t(q^t-1)}\qbinom{n}{t}}{\qbinom{n-1}{t}}
=
\frac{2}{q^t(q^t-1)}
\cdot
\frac{q^n-1}{q^{n-t}-1}
<
\frac{2(q^t+1)}{q^t(q^t-1)}
\le \frac56,
\]
where we used $q^t\ge 4$. Hence
\begin{equation}\label{eq:odd-stab-tail}
\sum_{k=t+2}^{M}|\mathcal F(k)|
<
\frac56\qbinom{n-1}{t}.
\end{equation}

Next we bound $\mathcal F(t)$ and $\mathcal F(t+1)$.
If $\mathcal F(t)\neq\varnothing$, fix $A\in\mathcal F(M)$. 
For every $B\in\mathcal F(t)$, the diameter condition gives
\[
2\dim(A\cap B)=M+t-\Delta(A,B)\ge M-t-1.
\]
Since $M\ge t+2$, every member of $\mathcal F(t)$ meets $A$ nontrivially. 
Moreover, the existence of $B\in\mathcal F(t)$ gives $M-t\le 2t+1$, and hence $M\le 3t+1$. 
Thus
\begin{equation}\label{eq:odd-stab-Ft}
|\mathcal F(t)|
\le
\qbinom{M}{1}\qbinom{n-1}{t-1}
\le
\qbinom{3t+1}{1}\qbinom{n-1}{t-1}.
\end{equation}
The same bound is trivial if $\mathcal F(t)=\varnothing$. By Lemma~\ref{lem:intersecting}, the family $\mathcal F(t+1)$ is $1$-intersecting in $V$. 
Since $n\ge 5t+3\ge 2(t+1)+1$, Theorem~\ref{thm:EKR-vector-space-1} gives
\begin{equation}\label{eq:odd-stab-Ftplus}
|\mathcal F(t+1)|\le \qbinom{n-1}{t}.
\end{equation}
Finally,
\[
\frac{\qbinom{3t+1}{1}\qbinom{n-1}{t-1}}{\qbinom{n-1}{t}}
=
\qbinom{3t+1}{1}\frac{q^t-1}{q^{n-t}-1}
\le
q^{3t+1}\frac{q^t}{q^{n-t-1}}
=
\frac{1}{q^{n-5t-2}}
\le \frac12.
\]
Combining this with \eqref{eq:odd-stab-tail}, \eqref{eq:odd-stab-Ft}, and \eqref{eq:odd-stab-Ftplus}, we obtain
\[
|\mathcal F|
<
\sum_{i=0}^{t-1}\qbinom{n}{i}
+
\left(\frac56+\frac12+1\right)\qbinom{n-1}{t}
=
\sum_{i=0}^{t-1}\qbinom{n}{i}
+
\frac73\qbinom{n-1}{t}.
\]
Since
\[
\frac{\qbinom{n}{t}}{\qbinom{n-1}{t}}
=
\frac{q^n-1}{q^{n-t}-1}
>
q^t
\ge 4,
\]
we have $\frac73\qbinom{n-1}{t}<\qbinom{n}{t}$. Therefore
\(
|\mathcal F|<\sum_{i=0}^{t}\qbinom{n}{i}.
\)
This is strictly smaller than the asserted stability bound, since $H(n,t)>0$. 
Thus equality cannot occur when $M\ge t+2$.

It remains to consider the case $M=t+1$. 
Then $\mathcal F(k)=\varnothing$ for all $k\ge t+2$, and $\mathcal F(t+1)\neq\varnothing$. 
Since $\mathcal F$ is not contained in any $\mathcal D_t(X)$, the layer $\mathcal F(t+1)$ is not contained in any star. 
Indeed, if $\mathcal F(t+1)$ were contained in $\{A\in\mathcal V(t+1):X\le A\}$ for some $X\in\mathcal V(1)$, then every member of $\mathcal F$ would lie in
\[
\bigcup_{i=0}^{t}\mathcal V(i)
\cup
\{A\in\mathcal V(t+1):X\le A\}
=
\mathcal D_t(X),
\]
contrary to $(\mathfrak E_A^{\mathrm{2t+1}},2t+1)$-admissibility. By Lemma~\ref{lem:intersecting}, the family $\mathcal F(t+1)$ is $1$-intersecting in $V$. 
Since it is not contained in any star, Lemma~\ref{lem:nontrivial-1-intersecting}, applied with $k=t+1$, gives
\[
|\mathcal F(t+1)|
\le
\qbinom{n-1}{t}
-
q^{t(t+1)}\qbinom{n-t-2}{t}
+
q^{t+1}
=
H(n,t).
\]
Therefore
\[
|\mathcal F|
\le
\sum_{i=0}^{t}|\mathcal F(i)|+|\mathcal F(t+1)|
\le
\sum_{i=0}^{t}\qbinom{n}{i}+H(n,t).
\]
This proves the desired upper bound for $(\mathfrak E_A^{\mathrm{2t+1}},2t+1)$-admissible families.

If equality holds, then  all lower-layer inequalities must be equalities, and hence $\mathcal F(i)=\mathcal V(i)$ for every $0\le i\le t$. 
Also, $\mathcal F(t+1)$ must be an extremal nontrivial $1$-intersecting family in $\mathcal V(t+1)$. 
By Lemma~\ref{lem:nontrivial-1-intersecting}, one of the following occurs.

First, there exist $X\in\mathcal V(1)$ and $Y\in\mathcal V(t+1)$ with $X\not\le Y$ such that
\[
\mathcal F(t+1)=\mathcal{HM}(n,t+1,X,Y).
\]
In this case
\[
\mathcal F
=
\bigcup_{i=0}^{t}\mathcal V(i)\cup\mathcal{HM}(n,t+1,X,Y)
=
\mathcal K(n,t+1,X,Y).
\]
Second, if $t=2$, then there exists $Y\in\mathcal V(3)$ such that
\(
\mathcal F(3)=\mathcal{HM}^*(n,3,Y).
\)
In this case
\[
\mathcal F
=
\bigcup_{i=0}^{2}\mathcal V(i)\cup\mathcal{HM}^*(n,3,Y)
=
\mathcal K^*(n,3,Y).
\]
If the initial normalization was obtained by replacing $\mathcal F$ by $\mathcal F^\perp$, then the same conclusion gives $\mathcal F^\perp=\mathcal K(n,t+1,X,Y)$, or, when $t=2$, $\mathcal F^\perp=\mathcal K^*(n,3,Y)$.

Conversely, the listed families attain equality. 
Indeed, the families $\mathcal{HM}(n,t+1,X,Y)$ and $\mathcal{HM}^*(n,3,Y)$ are precisely the extremal nontrivial $1$-intersecting families given by Lemma~\ref{lem:nontrivial-1-intersecting}. 
Hence
\[
|\mathcal K(n,t+1,X,Y)|
=
\sum_{i=0}^{t}\qbinom{n}{i}+H(n,t),
\]
and the same formula holds for $\mathcal K^*(n,3,Y)$ when $t=2$.

It remains to check the diameter condition. 
The lower part $\bigcup_{i=0}^{t}\mathcal V(i)$ has diameter at most $2t$. 
Every member of the top layer has dimension $t+1$, and the top layer is $1$-intersecting, so two top-layer members have distance at most $2t$. 
Finally, if $A\in\mathcal V(i)$ with $i\le t$ and $B\in\mathcal V(t+1)$, then $\Delta(A,B)\le i+t+1\le 2t+1$. 
Thus all listed families have diameter at most $2t+1$. 
They are $(\mathfrak E_A^{\mathrm{2t+1}},2t+1)$-admissible because their top layers are not contained in any star, and they contain the zero subspace, so they are not contained in any orthogonal image of a canonical double ball. This completes the proof for Type~A.

Now let $\mathcal F$ be $(\mathfrak E_B^{\mathrm{2t+1}},2t+1)$-admissible. 
Since
\(
\mathcal D_t(X)=\mathcal B(\boldsymbol0,X,t)\)
with
\(\Delta(\boldsymbol0,X)=1,
\)
every canonical double ball is a union of two adjacent radius-\(t\) balls; the same is true for its orthogonal image. Hence \((\mathfrak E_B^{\mathrm{2t+1}},2t+1)\)-admissibility implies \((\mathfrak E_A^{\mathrm{2t+1}},2t+1)\)-admissibility.
Therefore the same upper bound and the same equality necessity hold for Type~B. 
It remains only to verify that the listed equality families are actually $(\mathfrak E_B^{\mathrm{2t+1}},2t+1)$-admissible. Let $\mathcal K$ be one of the listed families. 
Then $\bigcup_{i=0}^{t}\mathcal V(i)\subseteq\mathcal K$, and the top layer of $\mathcal K$ is not contained in any star. 
Suppose for contradiction that $\mathcal K\subseteq \mathcal B(P,Q,t)$ for some $P,Q\in\mathcal V$ with $\Delta(P,Q)=1$. 
Since $\boldsymbol 0\in\mathcal K$, at least one of $P,Q$ has dimension at most $t$. 
As $\Delta(P,Q)=1$, after relabelling we may write $P\le Q$, with $\dim P=a$ and $\dim Q=a+1$, where $a\le t$.

We claim that $a=0$. 
If $a\ge 1$, then $a+1\le t+1$. 
Since $n\ge 5t+3$, there exists a $t$-dimensional subspace $T$ with $T\cap Q=\boldsymbol 0$. 
Then
\[
\Delta(T,P)=t+a>t,\quad \Delta(T,Q)=t+a+1>t,
\]
contradicting $T\in\mathcal K\subseteq \mathcal B(P,Q,t)$. 
Thus $a=0$, so $P=\boldsymbol 0$ and $Q=Z$ for some $Z\in\mathcal V(1)$.

But then $\mathcal B(P,Q,t)=\mathcal B(\boldsymbol 0,Z,t)=\mathcal D_t(Z)$, whose $(t+1)$-layer is the star $\{A\in\mathcal V(t+1):Z\le A\}$. 
This contradicts the fact that the top layer of $\mathcal K$ is not contained in any star. 
Therefore $\mathcal K$ is not contained in any union of two adjacent radius-$t$ balls, and hence is $(\mathfrak E_B^{\mathrm{2t+1}},2t+1)$-admissible. 
The same conclusion holds for $\mathcal K^\perp$, because the orthogonal-complement map is an isometry and sends adjacent centers to adjacent centers. Thus the same extremal families attain equality for Type~B as well. 
The theorem follows.

\section{Proof of Theorem~\ref{thm:stab-typeB-even}}\label{sec:stab-even-B}
We shall use the following consequence of the nontrivial intersection theorems recorded in Lemmas~\ref{lem:nontrivial-s-intersecting} and~\ref{lem:nontrivial-s-intersecting-2}.
\begin{lemma}\label{lem:typeB-nontrivial-layer-bound}
Let $d=2t$ with $t\ge 2$, and assume $n\ge 6t$.
Let $\mathcal F\subseteq\mathcal V$ satisfy $\diam(\mathcal F)\le 2t$.
Assume that, after possibly replacing $\mathcal F$ by $\mathcal F^\perp$, its support satisfies $m+M\le n$, where $m:=\min\supp(\mathcal F)$ and $M:=\max\supp(\mathcal F)$. Let $0\le i\le 2t$ and put $k:=M-i$. If $k\ge t+1$ and $\mathcal F(k)$ is a nontrivial $(k-t)$-intersecting family in $V$, then
\(
|\mathcal F(k)|\le q^{4t+2}\qbinom{n}{t-1}.
\)
Moreover, if $M \ge t+1$ and $k=t$, then the same bound holds.
\end{lemma}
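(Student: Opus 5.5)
We split according to the intersection parameter $s:=k-t\ge 1$ and apply Lemma~\ref{lem:nontrivial-s-intersecting} to the nontrivial $s$-intersecting family $\mathcal F(k)$. Before that, we locate $k$. Since $m+M\le n$ and $\diam(\mathcal F)\le 2t$, either $m\ge t+1$, giving $M\le n-t-1$, or $m\le t$, giving $M\le 3t\le n/2$. In both cases $k\le M$. For the hypotheses of Lemma~\ref{lem:nontrivial-s-intersecting} we need $n\ge 2k+2$. When $k\le n/2-1$ this is automatic. Otherwise $k$ is close to $n/2$ or above it, and we pass to $\mathcal F(k)^\perp$, a nontrivial $(n-k-t)$-intersecting family of $(n-k)$-spaces; by the normalization and $n\ge 6t$ we have $n-k\ge t+1$. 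Alternatively, for large $k$ it is simpler to use the crude bound from Theorem~\ref{thm:ekrvs}, $|\mathcal F(k)|\le\qbinom{k+t}{t}$. This is already tiny compared with $\qbinom{n}{t-1}$ when $k\le n-t-1$ and $n\ge 6t$, because $\qbinom{k+t}{t}\le \qbinom{n-1}{t}$ is still too big in general. So the honest route is the nontrivial bound in the middle range. For $k\ge n-2t$ we instead use that $\mathcal F(k)$ must intersect a fixed member of a low layer.

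\textbf{Main computation.} Suppose $k\ge s+2$, i.e.\ $t\ge 2$. If $s\le k/2-1$, part~(1) of Lemma~\ref{lem:nontrivial-s-intersecting} together with Lemma~\ref{lem:nontrivial-s-intersecting-2} (valid when $s\le k-3$, i.e.\ $t\ge3$; for $t=2$ we check directly as in Section~\ref{sec:stab-even-A}) gives
\[
|\mathcal F(k)|\le \qbinom{t+1}{1}\qbinom{n-s-1}{t-1}.
\]
If $k/2-1<s\le k-2$, part~(2) gives $|\mathcal F(k)|\le \qbinom{s+2}{1}\qbinom{n-s-1}{t-1}$. In both cases $\qbinom{n-s-1}{t-1}\le\qbinom{n}{t-1}$. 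For the prefactor, $s+2=k-t+2\le 2t+2$ because $k\le M\le 3t$ in the relevant range. This gives a factor at most $q^{2t+2}\le q^{4t+2}$, with room to spare, and the slack absorbs the range where $k$ is larger but the Gaussian factor $\qbinom{n-s-1}{t-1}$ is much smaller.

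\textbf{The case $k=t$ with $M\ge t+1$.} Here there is no intersection condition within the layer. Fix $A\in\mathcal F(M)$. Exactly as in \eqref{eq:typeA-Ft-bound}, $2\dim(A\cap B)\ge M-t\ge1$ for all $B\in\mathcal F(t)$, so every such $B$ meets $A$, and $M\le 3t$. A union bound gives
\[
|\mathcal F(t)|\le\qbinom{3t}{1}\qbinom{n-1}{t-1}\le q^{3t+1}\qbinom{n}{t-1}\le q^{4t+2}\qbinom{n}{t-1}.
\]

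\textbf{Main obstacle.} The delicate part is the upper range of $k$, where $k>n/2$ is possible if $m\ge t+1$. There Lemma~\ref{lem:nontrivial-s-intersecting} does not apply directly, and we must dualize. The family $\mathcal F(k)^\perp$ is nontrivial $(n-k-t)$-intersecting in $\mathcal V(n-k)$. Running the same two cases with $k'=n-k$, $s'=n-k-t$ gives a bound of the form $\qbinom{s'+2}{1}\qbinom{n-s'-1}{t-1}$ with $s'\le k'/2$. Checking that the prefactor stays below $q^{4t+2}$ uses $k'\ge t+1$, and when $s'$ is large, also uses the loss in $\qbinom{n-s'-1}{t-1}$ relative to $\qbinom{n}{t-1}$ (roughly $q^{-s'(t-1)}$). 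Verifying these inequalities uniformly for $n\ge 6t$ is the main bookkeeping step.
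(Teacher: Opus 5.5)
Your treatment of $k=t$, and your use of Lemma~\ref{lem:nontrivial-s-intersecting} when $k\le n/2-1$, follow the paper. The upper range of $k$ has a genuine gap, and the missing idea is simple. The normalization and the diameter bound together give $2M\le (m+M)+(M-m)\le n+2t$, so $k\le M\le\lfloor n/2\rfloor+t$. Your split ``$m\ge t+1$ gives $M\le n-t-1$'' throws this away, which is why you (rightly, for $k$ near $n-t-1$) rejected the crude bound. With $k\le\lfloor n/2\rfloor+t$, the upper range needs neither nontriviality nor duality. For every $k>n/2-1$, Theorem~\ref{thm:ekrvs} applies (as $n\ge k+t$) and gives $|\mathcal F(k)|\le\qbinom{\lfloor n/2\rfloor+2t}{t}$. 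For $k=(n-1)/2$ the maximum there is $\qbinom{k+1+t}{t}$, which is still within this bound. Then $\qbinom{N}{r}\le q^{r(N-r+1)}$ and $\qbinom{n}{t-1}\ge q^{(t-1)(n-t+1)}$ bound the ratio by $q^{t(n/2+t+1)-(t-1)(n-t+1)}\le q^{4t+2}$ for $n\ge 6t$, $t\ge 2$. This is exactly the paper's route.

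The route you propose instead fails as stated, because orthogonal complementation does not preserve nontriviality. The family $\mathcal F(k)^\perp$ is trivial $(n-k-t)$-intersecting precisely when all members of $\mathcal F(k)$ lie in a common $(k+t)$-space, and this is compatible with $\mathcal F(k)$ having no common $(k-t)$-subspace. For instance, take $n$ even, $k=n/2+t$, and $B\le U$ with $\dim B=n/2-t$ and $\dim U=n/2+2t$. The family $\mathcal F=\{B\}\cup\{A\in\mathcal V(k):B\le A\le U\}$ satisfies every hypothesis ($m+M=n$, diameter $2t$). Here $\mathcal F(k)$ is nontrivial, since its members intersect exactly in $B$. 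Yet every member of $\mathcal F(k)^\perp$ contains $U^\perp\in\mathcal V(n-k-t)$, so Lemma~\ref{lem:nontrivial-s-intersecting} says nothing about the dual. That subcase can only be controlled by $\qbinom{k+t}{t}$, which again requires $k\le n/2+t$. Two further holes: the dual application needs $n\ge 2(n-k)+2$, so $n/2-1<k<n/2+1$ is left uncovered, and the ``fixed member of a low layer'' idea is never carried out.

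Finally, in the range $k\le n/2-1$, your claim $k\le M\le 3t$ is false when $m\ge t+1$, since $s=k-t$ can be of order $n/2$. The prefactor has to be beaten by the decay $\qbinom{n-s-1}{t-1}\le q^{-(s+1)(t-1)}\qbinom{n}{t-1}$. This makes both $\qbinom{t+1}{1}q^{-(s+1)(t-1)}$ and $\qbinom{s+2}{1}q^{-(s+1)(t-1)}$ at most $q^{3}$. Your ``slack'' remark points this way, but this inequality is what actually carries that step.
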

\begin{proof}[Proof of Lemma~\ref{lem:typeB-nontrivial-layer-bound}]
First suppose $k\ge t+1$ and $\mathcal F(k)$ is nontrivial $(k-t)$-intersecting. 
Set $s:=k-t$. 
Since $m+M\le n$ and $M-m\le 2t$, we have $M\le \lfloor n/2\rfloor+t$. 
Thus $k\le \lfloor n/2\rfloor+t$.

If $k\ge n/2$, then Theorem~\ref{thm:ekrvs} gives
\[
|\mathcal F(k)|\le \qbinom{k+t}{t}\le \qbinom{\lfloor n/2\rfloor+2t}{t}.
\]
Using the standard estimates
\(
\qbinom Nr\le q^{r(N-r+1)}\) and
\(
\qbinom Nr\ge q^{r(N-r)},
\)
we get
\[
\frac{\qbinom{\lfloor n/2\rfloor+2t}{t}}{\qbinom{n}{t-1}}
\le
q^{\,t(n/2+t+1)-(t-1)(n-t+1)}
\le q^{4t+2},
\]
where the last inequality follows from \(n\ge6t\) and \(t\ge2\). Hence $|\mathcal F(k)|\le q^{4t+2}\qbinom{n}{t-1}$ in this case.

Now suppose $k\le n/2-1$. 
Then $n\ge 2k+2$, so Lemmas~\ref{lem:nontrivial-s-intersecting} and~\ref{lem:nontrivial-s-intersecting-2} apply. 
Since $k-s=t$, we obtain
\[
|\mathcal F(k)|
\le
\max\left\{
\qbinom{t+1}{1}\qbinom{n-s-1}{t-1},
\qbinom{s+2}{1}\qbinom{n-s-1}{t-1}
\right\}.
\]
Furthermore, notice that
\(
\frac{\qbinom{n-s-1}{t-1}}{\qbinom{n}{t-1}}
\le q^{-(s+1)(t-1)}.
\)
Then for the first term,
\[
\qbinom{t+1}{1}q^{-(s+1)(t-1)}
\le q^{t+1}q^{-2(t-1)}
\le q^{3},
\]
while for the second term,
\[
\qbinom{s+2}{1}q^{-(s+1)(t-1)}
\le q^{s+2-(s+1)(t-1)}
\le q^{3},
\]
where the last inequality uses $t\ge 2$ and $s\ge 1$. 
Thus $|\mathcal F(k)|\le q^3\qbinom{n}{t-1}$, which is stronger than the claimed bound.

It remains to deal with the case $M\ge t+1$ and $k=t$. 
If $\mathcal F(t)=\varnothing$, there is nothing to prove. 
Otherwise, choose $A\in\mathcal F(M)$. 
Since $M-t\le 2t$, we have $M\le 3t$. 
For every $B\in\mathcal F(t)$,
\[
2\dim(A\cap B)=M+t-\Delta(A,B)\ge M-t.
\]
Since $M\ge t+1$, every $B\in\mathcal F(t)$ meets $A$ nontrivially. 
Consequently,
\[
|\mathcal F(t)|
\le
\qbinom{M}{1}\qbinom{n-1}{t-1}
\le
\qbinom{3t}{1}\qbinom{n-1}{t-1}
\le
q^{4t+2}\qbinom{n}{t-1}.
\]
This completes the proof.
    
\end{proof}

We now give the proof of Theorem~\ref{thm:stab-typeB-even}. 
Let $m:=\min\supp(\mathcal F)$ and $M:=\max\supp(\mathcal F)$. 
Since $(\mathfrak E_B^{\mathrm{2t}},2t)$-admissibility is preserved under taking orthogonal complements, after possibly replacing $\mathcal F$ by $\mathcal F^\perp$ we may assume $m+M\le n$. 
Also \(M-m\le 2t\), because \(\diam(\mathcal F)\le 2t\). Since \(\mathcal F\) is \((\mathfrak E_B^{\mathrm{2t}},2t)\)-admissible, it is not contained in \(\mathcal L_t=\mathcal B(\boldsymbol0,t)\). Hence \(M\ge t+1\).

We begin with two claims that provide the mechanism for turning a large trivial top layer into containment in a single ball.

\begin{claim}\label{claim:typeB-propagation}
Let $0\le i\le 2t$ and assume that $M-i\ge t+1$. 
Let $W\in\mathcal V(M-t-i)$ be such that $W\le A$ for every $A\in\mathcal F(M-i)$. 
If
\(
|\mathcal F(M-i)|>\qbinom{3t}{t}\qbinom{n}{t-1},
\)
then for every $0\le j\le 2t$ and every $B\in\mathcal F(M-j)$, we have
\[
\dim(B\cap W)\ge
\begin{cases}
\left\lceil M-t-\dfrac{i+j}{2}\right\rceil, & i+1\le j\le 2t,\\[2mm]
M-t-i, & 0\le j\le i.
\end{cases}
\]
\end{claim}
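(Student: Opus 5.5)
The claim is a contrapositive counting statement, and I would prove it one member $B$ at a time. Fix $0\le j\le 2t$ and $B\in\mathcal F(M-j)$. For every $A\in\mathcal F(M-i)$ the diameter condition $\Delta(A,B)\le 2t$ gives
\[
2\dim(A\cap B)\ge (M-i)+(M-j)-2t,
\qquad\text{so}\qquad
\dim(A\cap B)\ge s:=\left\lceil M-t-\tfrac{i+j}{2}\right\rceil .
\]
Since $\dim W=M-t-i$, the two cases of the claim should be read as the single target $\dim(B\cap W)\ge \min\{s,\dim W\}$. When $j\le i$ we have $s\ge \dim W$, so the target is $W\le B$. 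When $j>i$ we have $s<\dim W$, so the target is $\dim(B\cap W)\ge s$.

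Assume for contradiction that $w:=\dim(B\cap W)<\min\{s,\dim W\}$. I would pass to the quotient $\overline V:=V/W$, which has dimension $n':=n-\dim W$. Every $A\in\mathcal F(M-i)$ contains $W$, so $\overline A:=A/W$ is a $t$-space of $\overline V$. Put $\overline U:=(B+W)/W$, of dimension $M-j-w$. The containment $(A\cap B)+W\le A\cap(B+W)$ gives
\[
\dim\bigl(\overline A\cap\overline U\bigr)\ge \dim(A\cap B)+\dim W-w-\dim W\ge s-w\ge 1 .
\]
Thus $A\mapsto\overline A$ is an injection of $\mathcal F(M-i)$ into the set of $t$-spaces of $\overline V$ meeting the fixed space $\overline U$ in dimension at least $s-w$. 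The goal is to bound the size of this set by $\qbinom{3t}{t}\qbinom{n}{t-1}$, which contradicts the hypothesis $|\mathcal F(M-i)|>\qbinom{3t}{t}\qbinom{n}{t-1}$.

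The obvious count fixes an $(s-w)$-subspace of $\overline U$ and then a $t$-space through it. This gives at most $\qbinom{M-j-w}{s-w}\qbinom{n'}{t-s+w}$. When $s-w\ge1$, the second factor is at most $\qbinom{n}{t-1}$ by monotonicity, since $n'\le n$ and $t-s+w\le t-1\le n/2$. The whole difficulty is the first factor, which I expect to be the main obstacle: $\dim\overline U=M-j-w$ need not be at most $3t$ in general. To get a uniform constant I would shrink $\overline U$. Fix one member $A_0\in\mathcal F(M-i)$. Every $A\in\mathcal F(M-i)$ satisfies $\dim(A\cap A_0)\ge M-t-i=\dim W$, which is automatic, so on its own this gives nothing. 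Instead I would use the triangle-inequality structure $\Delta(A,B)\le 2t$ together with the fact that $\overline A$ is only $t$-dimensional. The part of $A\cap B$ outside $W$ lies in $B$, and the count should only depend on the interaction of $\overline A$ with $\overline U$ inside a subspace of dimension at most $2t+\dim\overline A\le 3t$. Concretely, I would try to choose the $(s-w)$-subspace of $\overline A\cap\overline U$ inside $\overline A_0+\overline U'$, where $\overline U'$ is a complement-controlled piece of dimension at most $2t$ determined by $\Delta(A_0,B)\le 2t$. This bounds the first factor by $\qbinom{3t}{s-w}\le\qbinom{3t}{t}$. If that reduction fails, the fallback is to split according to whether $\dim \overline U\le 3t$. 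In the case $\dim\overline U>3t$, I would use the normalization $M\le \lfloor n/2\rfloor+t$ together with $n\ge 6t$ and the estimate $\qbinom{N}{r}\le q^{r(N-r+1)}$ to show directly that the number of $t$-spaces meeting $\overline U$ is still below the threshold.

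Once the counting bound is in place, the contradiction with $|\mathcal F(M-i)|>\qbinom{3t}{t}\qbinom{n}{t-1}$ yields $\dim(B\cap W)\ge\min\{s,\dim W\}$. This is exactly the two displayed cases of the claim.
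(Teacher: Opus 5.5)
Your reduction matches the paper's. Quotienting by $W$ and counting $t$-spaces $\overline A$ of $V/W$ that meet $\overline U=(B+W)/W$ in dimension at least $s-w$ is what the paper does when it writes $\dim(B\cap W)=s_{ij}-\ell$ (resp.\ $\dim W-\ell$) and counts members of $\mathcal F(M-i)$ that contain $W$ and meet $B+W$ in large dimension.

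The gap is the step you yourself flag as the main obstacle: you never bound the first factor $\qbinom{M-j-w}{s-w}$. The shrinking to $\overline A_0+\overline U'$ relies on a space $\overline U'$ that is never defined. The fallback for $\dim\overline U>3t$ does not work as sketched: once the second factor is replaced by $\qbinom{n}{t-1}$, nothing in $n\ge 6t$ or $M\le\lfloor n/2\rfloor+t$ bounds $\qbinom{\dim\overline U}{1}$ by a constant, and a priori this factor can be of order $q^{n/2}$. (A crude count with no bound on $\dim\overline U$ can be rescued by keeping the smaller quotient dimension $n'$ in the second factor, whose loss $q^{-(t-1)(\dim W+1)}$ offsets $\qbinom{\dim\overline U}{1}<q^{M}$. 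That is not the argument you describe, though.)

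The missing idea is one line, and it uses your own displayed inequality. Since $\mathcal F(M-i)\neq\varnothing$, take any $A$ in it. Then $s-w\le\dim(\overline A\cap\overline U)\le\dim\overline A=t$, so $\dim(B\cap W)\ge s-t$; this is just $\dim((A\cap B)+W)\le\dim A$. As $s=M-t-\lfloor (i+j)/2\rfloor$, this gives
\[
\dim\overline U=M-j-w\le M-j-s+t=2t+\left\lfloor \frac{i-j}{2}\right\rfloor ,
\]
which is at most $2t-1$ when $j>i$ and at most $2t+\lfloor i/2\rfloor\le 3t$ when $j\le i$.

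So the case $\dim\overline U>3t$ never occurs. Together with $1\le s-w\le t$, your obvious count then gives
$|\mathcal F(M-i)|\le\qbinom{3t}{s-w}\qbinom{n}{t-1}\le\qbinom{3t}{t}\qbinom{n}{t-1}$, which is the required contradiction. This is exactly the paper's remark that the cross-intersection condition forces $\ell\le t$ (resp.\ $\ell\le t-i+\lfloor (i+j)/2\rfloor$), bounding $\dim(B+W)-\dim W$ by $2t$ (resp.\ $3t$).

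A minor slip: for $j=i+1$ one has $s=\dim W$, not $s<\dim W$. This is harmless, since your target is $\min\{s,\dim W\}$.
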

\begin{poc}
    Set $w:=M-t-i$ and $s_{ij}:=\left\lceil M-t-\frac{i+j}{2}\right\rceil$.
Then by Lemma~\ref{lem:intersecting}, the pair $(\mathcal F(M-i),\mathcal F(M-j))$ is cross-$s_{ij}$-intersecting.

First suppose $i+1\le j\le 2t$. 
Assume, for a contradiction, that there exists $B\in\mathcal F(M-j)$ with $\dim(B\cap W)=s_{ij}-\ell$ for some $\ell\ge 1$.
The cross-intersection condition also gives 
\(
\ell \le t.
\)
For every $A\in\mathcal F(M-i)$,
since $W\le A$ and $\dim(A\cap B)\ge s_{ij}$, we have
\begin{align*}
  \dim(A \cap (B+W))
  &\ge 
  \dim((A \cap B)+(A \cap W)) \\
  &\ge
  \dim(A\cap B)+\dim W-\dim(B\cap W)
  \ge 
  w+\ell.
\end{align*}
Thus every $A\in\mathcal F(M-i)$ contains $W$ and meets $B+W$ in dimension at least $w+\ell$. 
Therefore
\[
|\mathcal F(M-i)|
\le
\qbinom{\dim(B+W)-w}{\ell}
\qbinom{n-w-\ell}{t-\ell}.
\]
Since
\(
\dim(B+W)-w=t+\ell-j+\left\lfloor\frac{i+j}{2}\right\rfloor\le 2t
\)
and $1\le \ell\le t$, we get
\[
|\mathcal F(M-i)|
\le
\qbinom{2t}{\ell}\qbinom{n}{t-\ell}
\le
\qbinom{2t}{t}\qbinom{n}{t-1}
<
\qbinom{3t}{t}\qbinom{n}{t-1},
\]
a contradiction. 

Now suppose $0\le j\le i$. 
Assume that there exists $B\in\mathcal F(M-j)$ with $\dim(B\cap W)=w-\ell$ for some $\ell\ge 1$. 
The cross-intersection condition also gives
\(
\ell\le t-i+\left\lfloor\frac{i+j}{2}\right\rfloor.
\)
For every $A\in\mathcal F(M-i)$, we again have
\begin{align*}
  \dim(A \cap (B+W))
  &\ge 
  \dim((A \cap B)+(A \cap W)) \\
  &\ge
  \dim(A\cap B)+\dim W-\dim(B\cap W)
  \ge 
  s_{ij}+\ell.
\end{align*}
Hence
\[
|\mathcal F(M-i)|
\le
\qbinom{\dim(B+W)-w}{s_{ij}+\ell-w}
\qbinom{n-s_{ij}-\ell}{M-i-s_{ij}-\ell}.
\]
Here
\(
\dim(B+W)-w=t+i-j+\ell,
\)
and
\(
s_{ij}+\ell-w=\ell+i-\left\lfloor\frac{i+j}{2}\right\rfloor.
\)
Consequently the first Gaussian coefficient is bounded by $\qbinom{3t}{t}$, while the second is bounded by $\qbinom{n}{t-1}$. 
Thus
\[
|\mathcal F(M-i)|
\le
\qbinom{3t}{t}\qbinom{n}{t-1},
\]
again a contradiction. This finishes the proof.
\end{poc}
\begin{claim}\label{claim:typeB-trivial-layer-small}
Let $1\le i\le 2t$ and assume that $M-i\ge t+1$. 
Let $W\in\mathcal V(M-t-i)$ be such that $W\le A$ for every $A\in\mathcal F(M-i)$. 
Then
\[
|\mathcal F(M-i)|\le \qbinom{3t}{t}\qbinom{n}{t-1}.
\]
\end{claim}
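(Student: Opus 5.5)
The plan is to argue by contradiction and use Claim~\ref{claim:typeB-propagation} with $j=0$, that is, with the top layer $\mathcal F(M)$, which is nonempty by the definition of $M$. So suppose that
\[
|\mathcal F(M-i)|>\qbinom{3t}{t}\qbinom{n}{t-1}.
\]
Write $w:=M-t-i=\dim W$. Since $0\le i$, the second alternative of Claim~\ref{claim:typeB-propagation} applies to $j=0$. It says that every $B\in\mathcal F(M)$ satisfies $\dim(B\cap W)\ge w$, so $W\le B$. Fix one such $B$. Then $\dim B-\dim W=t+i\le 3t$.

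Next I would use the cross-intersection between the layers $M-i$ and $M$. By Lemma~\ref{lem:intersecting}, every $A\in\mathcal F(M-i)$ satisfies
\[
\dim(A\cap B)\ge \left\lceil \frac{(M-i)+M-2t}{2}\right\rceil = M-t-\left\lfloor \frac i2\right\rfloor .
\]
Because $i\ge1$ we have $\lfloor i/2\rfloor\le i-1$, so this gives $\dim(A\cap B)\ge w+1$. This is the only place where the hypothesis $i\ge 1$ enters, and it is exactly what fails for $i=0$. Both $A$ and $B$ contain $W$. Passing to the quotient $V/W$, of dimension $n-w$, the image $A/W$ is a $t$-dimensional subspace that meets the $(t+i)$-dimensional subspace $B/W$ nontrivially.

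Then I count such $A$. The plan is a union bound: choose a $1$-subspace of $B/W$ inside $A/W$, then extend it to a $t$-space of $V/W$. This gives
\[
|\mathcal F(M-i)|\le \qbinom{t+i}{1}\qbinom{n-w-1}{t-1}\le \qbinom{3t}{1}\qbinom{n}{t-1}\le \qbinom{3t}{t}\qbinom{n}{t-1},
\]
using $i\le 2t$, monotonicity of Gaussian coefficients, and $1\le t\le 3t-1$. This contradicts the assumption, and the claim follows.

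The main point to check carefully is the application of Claim~\ref{claim:typeB-propagation} at $j=0$: the hypotheses $M-i\ge t+1$ and $W\le A$ for all $A\in\mathcal F(M-i)$ are exactly those of the present claim, so it applies as stated. If one prefers to avoid it, the same inequality $\dim(A\cap(B+W))\ge \dim(A\cap B)+w-\dim(B\cap W)$ used in that proof gives the required nontrivial meeting with $(B+W)/W$ directly. The remaining steps are routine Gaussian-coefficient estimates.
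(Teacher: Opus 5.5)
Your proof is correct and is essentially the paper's argument: assume the bound fails, invoke Claim~\ref{claim:typeB-propagation} at $j=0$ to get $W\le B$ for some $B\in\mathcal F(M)$, then count the $t$-spaces $A/W$ in $V/W$ that meet the $(t+i)$-space $B/W$. The only difference is that you use just a $1$-dimensional meeting with a union bound, $\qbinom{t+i}{1}\qbinom{n-w-1}{t-1}$, whereas the paper uses the full $\lceil i/2\rceil$-dimensional intersection, $\qbinom{t+i}{\lceil i/2\rceil}\qbinom{n-w-\lceil i/2\rceil}{t-\lceil i/2\rceil}$; both are at most $\qbinom{3t}{t}\qbinom{n}{t-1}$.
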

\begin{poc}
Suppose not. 
Choose $B\in\mathcal F(M)$. 
Applying Claim~\ref{claim:typeB-propagation} with $j=0$, we obtain $W\le B$. 
By Lemma~\ref{lem:intersecting}, the pair $(\mathcal F(M-i),\mathcal F(M))$ is cross-$\left\lceil M-t-\frac{i}{2}\right\rceil$-intersecting. Hence every $A\in\mathcal F(M-i)$ contains $W$ and meets $B$ in dimension at least $\left\lceil M-t-\frac{i}{2}\right\rceil$.
Counting such subspaces gives
\[
|\mathcal F(M-i)|
\le
\qbinom{t+i}{\left\lceil i/2\right\rceil}
\qbinom{n-\left\lceil M-t-i/2\right\rceil}{t-\left\lceil i/2\right\rceil}
\le
\qbinom{3t}{t}\qbinom{n}{t-1},
\]
a contradiction.
\end{poc}

By Lemma~\ref{lem:intersecting}, the top layer $\mathcal F(M)$ is $(M-t)$-intersecting in $V$. We then focus on this layer.

We first consider the case where $\mathcal F(M)$ is a trivial $(M-t)$-intersecting family and
\(
|\mathcal F(M)|>\qbinom{3t}{t}\qbinom{n}{t-1}.
\)
Then there exists \(W\in\mathcal V(M-t)\) such that \(W\le A\) for every \(A\in\mathcal F(M)\). Applying Claim~\ref{claim:typeB-propagation} with \(i=0\), we get, for every $0\le j\le 2t$ and every $B\in\mathcal F(M-j)$,
\(
\dim(B\cap W)\ge \left\lceil M-t-\frac{j}{2}\right\rceil.
\)
Therefore
\[
\Delta(B,W)
=
(M-j)+(M-t)-2\dim(B\cap W)
\le t.
\]
Since $M-m\le 2t$, every nonempty layer of $\mathcal F$ is of the form $\mathcal F(M-j)$ for some $0\le j\le 2t$. 
Thus $\mathcal F\subseteq \mathcal B(W,t)$, contradicting $(\mathfrak E_B^{\mathrm{2t}},2t)$-admissibility.

Hence the above case cannot occur. 
We now bound every layer of dimension at least $t$. For the top layer, either $\mathcal F(M)$ is trivial and
\(
|\mathcal F(M)|\le \qbinom{3t}{t}\qbinom{n}{t-1},
\)
or $\mathcal F(M)$ is nontrivial, in which case Lemma~\ref{lem:typeB-nontrivial-layer-bound} gives
\(
|\mathcal F(M)|\le q^{4t+2}\qbinom{n}{t-1}.
\)
Thus
\[
|\mathcal F(M)|
\le
\max\left\{
\qbinom{3t}{t}\qbinom{n}{t-1},
q^{4t+2}\qbinom{n}{t-1}
\right\}.
\]
Now let $1\le i\le 2t$ with $M-i\ge t+1$ and $\mathcal F(M-i)\neq\varnothing$. 
If $\mathcal F(M-i)$ is a trivial $(M-t-i)$-intersecting family, then Claim~\ref{claim:typeB-trivial-layer-small} gives
\(
|\mathcal F(M-i)|\le \qbinom{3t}{t}\qbinom{n}{t-1}.
\)
If it is nontrivial, then Lemma~\ref{lem:typeB-nontrivial-layer-bound} gives
\(
|\mathcal F(M-i)|\le q^{4t+2}\qbinom{n}{t-1}.
\)
Finally, if $\mathcal F(t)\neq\varnothing$, Lemma~\ref{lem:typeB-nontrivial-layer-bound} gives the same bound
\(
|\mathcal F(t)|\le q^{4t+2}\qbinom{n}{t-1}.
\)
Since $\qbinom{3t}{t}>q^{2t^2}$ and $t\ge 2$, we have
\(
q^{4t+2}<q^2\qbinom{3t}{t}.
\)
Consequently every nonempty layer $\mathcal F(k)$ with $k\ge t$ satisfies
\[
|\mathcal F(k)|<q^2\qbinom{3t}{t}\qbinom{n}{t-1}.
\]
There are at most \(2t+1\) nonempty layers of dimension at least \(t\), because all nonempty dimensions lie in the interval \([m,M]\) and \(M-m\le2t\).
Therefore
\[
\sum_{k=t}^{M}|\mathcal F(k)|
<
(2t+1)q^2\qbinom{3t}{t}\qbinom{n}{t-1}.
\]
For the lower layers we use the trivial estimate $|\mathcal F(k)|\le \qbinom{n}{k}$ for $0\le k\le t-1$. 
Hence
\[
|\mathcal F|
=
\sum_{k=m}^{t-1}|\mathcal F(k)|+\sum_{k=t}^{M}|\mathcal F(k)|
<
\sum_{i=0}^{t-1}\qbinom{n}{i}
+
(2t+1)q^2\qbinom{3t}{t}\qbinom{n}{t-1}.
\]
This finishes the proof.

\section*{Acknowledgements}

This research originated during the 35th KIAS Combinatorics Workshop, held in Busan, Korea, in December 2025. The authors would like to thank the organizers of the workshop for creating a stimulating and productive environment that made this collaboration possible. Chenhui Lv warmly thanks Jongyook Park for his generous invitation and kind hospitality during the visit. During the early stages of this work, Zixiang Xu was a senior researcher at Extremal Combinatorics and Probability Group, Institute for Basic Science, and was supported by IBS-R029-C4. Zixiang Xu would like to thank Jiaqi Liao for kindly sending him an updated version of~\cite{LLY2026}.

\bibliographystyle{abbrv}
\bibliography{Kleitman}
\end{document}